\newcommand{\QQ}{\mathbb Q}
\newcommand{\FF}{\mathbb F}
\newcommand{\ZZ}{\mathbb Z}
\newcommand{\EE}{\mathbb E}
\newcommand{\BB}{\mathbb B}
\newcommand{\DD}{\mathbb D}
\renewcommand{\AA}{\mathbb A}
\newcommand{\NN}{\mathbb N}
\newcommand{\Brig}{\BB^+_{\mathrm{rig},\QQ_p}}
\DeclareMathOperator{\Sel}{Sel}
\DeclareMathOperator{\Tr}{Tr}
\DeclareMathOperator{\coker}{coker}
\DeclareMathOperator{\Hom}{Hom}
\DeclareMathOperator{\Gal}{Gal}
\DeclareMathOperator{\rank}{rank}
\DeclareMathOperator{\Iw}{Iw}
\DeclareMathOperator{\cris}{cris}
\DeclareMathOperator{\rig}{rig}
\DeclareMathOperator{\Col}{Col}
\DeclareMathOperator{\cor}{cor}
\DeclareMathOperator{\Rep}{Rep}
\DeclareMathOperator{\Fil}{Fil}
\newcommand{\Ep}{E_{p^\infty}}
\newcommand{\Qp}{\QQ_p}
\newcommand{\Zp}{\ZZ_p}
\newcommand{\calG}{\mathcal{G}}
\newcommand{\calC}{\mathcal{C}}
\newcommand{\calO}{\mathcal{O}}
\newcommand{\CC}{\mathbb{C}}
\newcommand{\vp}{\varphi}
\newcommand{\MHG}{{\mathfrak M}_H(G)}
\newtheorem{theorem}{Theorem}[section]
\newtheorem{proposition}[theorem]{Proposition}
\newtheorem{lemma}[theorem]{Lemma}
\newtheorem{corollary}[theorem]{Corollary}
\newtheorem{remark}[theorem]{Remark}
\newtheorem{assumption}[theorem]{Assumption}
\newtheorem{definition}[theorem]{Definition}
\newtheorem{conjecture}[theorem]{Conjecture}
\begin{document}  

\title{Signed Selmer groups over $p$-adic Lie extensions}

\author{Antonio Lei}
\address[Lei]{School of Mathematical Sciences\\
Monash University\\
Clayton, VIC 3800\\
Australia}
\email{antonio.lei@monash.edu}
\thanks{The first author is supported by an ARC DP1092496 grant.}

\author{Sarah Livia Zerbes}
\address[Zerbes]{Department of Mathematics\\
  Harrison Building\\
  University of Exeter\\
  Exeter EX4 4QF, UK
}
\email{s.zerbes@exeter.ac.uk}

\thanks{The second author is supported by EPSRC Postdoctoral Fellowship EP/F043007/1.}

\begin{abstract}
 Let $E$ be an elliptic curve over $\QQ$ with good supersingular reduction at a prime $p\geq 3$ and $a_p=0$. We generalise the definition of Kobayashi's plus/minus Selmer groups over $\QQ(\mu_{p^\infty})$ to $p$-adic Lie extensions $K_\infty$ of $\QQ$ containing $\QQ(\mu_{p^\infty})$, using the theory of $(\vp,\Gamma)$-modules and Berger's comparison isomorphisms. We show that these Selmer groups can be equally described using the ``jumping conditions" of Kobayashi via the theory of overconvergent power series. Moreover, we show that such an approach gives the usual Selmer groups in the ordinary case. 
\end{abstract}

\subjclass[2000]{11R23,11G05}

\maketitle
\tableofcontents

 \section{Introduction}
 
  Let $E$ be an elliptic curve defined over $\QQ$ with good supersingular reduction at a prime $p\geq 3$ and $a_p=0$. Kobayashi \cite{kobayashi03} constructed two $\Lambda$-cotorsion Selmer groups $\Sel^i(E/\QQ(\mu_p^{\infty}))$, $i=1,2$ (denoted by $\Sel^\pm(E/\QQ(\mu_p^{\infty}))$ in \emph{op.cit.}) by modifying the local condition at $p$ in the definition of the usual Selmer group. In this paper, we propose an analogous definition of signed Selmer groups $\Sel^i(E\slash K_\infty)$ of $E$ over $K_\infty$ for $i=1,2$, where $K_\infty$ is a $p$-adic Lie extension over $\QQ$ which contains $\QQ(\mu_{p^\infty})$.

  The main idea of our construction is the use of Berger's comparison isomorphism in~\cite{berger02}. Let us first recall the description of the signed Selmer groups $\Sel^i(E\slash \QQ(\mu_{p^\infty}))$ in terms of $p$-adic Hodge theory as given in \cite{leiloefflerzerbes10}. Let $V=\QQ_p\otimes T$ where $T=T_pE$ is the Tate module of $E$ at $p$, then $V$ is a crystalline representation of $\calG_{\QQ_p}$, the absolute Galois group of $\Qp$. We write $\NN(T)$ for the Wach module of $T$ (c.f. \cite{berger03,wach96}). Then a result of Fontaine/Berger states that we have a canonical isomorphism $H^1_{\Iw}(\QQ_p,T)\cong \NN(T)^{\psi=1}$ (we will identify these two objects throughout the paper). Let $n^\pm$ be the canonical basis of $\NN(T)$ as constructed in the appendix of \emph{op.cit.}, and let $v^\pm$ be the induced basis of $\DD_{\cris}(V)$. Via Berger's comparison isomorphism, any element $x\in\NN(T)^{\psi=1}$ can be expressed in the form $x=x_1 v^++x_2v^-$ where $x_i\in\BB^+_{\rig,\QQ_p}$ for $i=1,2$. Define
   \[ H^1_{\Iw}(\QQ_p,T)^i=\left\{ x\in \NN(T)^{\psi=1}\mid \vp(x_i)=-p\psi(x_i)\right\},\]
  and let $H^1(\QQ_p(\mu_{p^n}),T)^i$ be the image of $H^1_{\Iw}(\QQ_p,T)^i$ under the natural projection map $H^1_{\Iw}(\QQ_p,T)\rightarrow H^1(\QQ_p(\mu_{p^n}),T)$. Define $H^1_{f,i}(\QQ_p(\mu_{p^n}),\Ep)$ to be the exact annihilator of $H^1(\QQ_p(\mu_{p^n}),T)^i$ under the Tate pairing. One then defines $\Sel^i(E\slash \QQ(\mu_{p^n}))$ by replacing the usual local condition $H^1_{f}(\QQ_p(\mu_{p^n}),\Ep)$ at the unique prime of $\QQ(\mu_{p^n})$ above $p$ by $H^1_{f,i}(\QQ_p(\mu_{p^n}),\Ep)$.
  
  If $F$ is an arbitrary finite extension of $\QQ_p$, then $H^1_{\Iw}(F,T)$ is canonically isomorphic to $\DD_F(T)^{\psi=1}$, where $\DD_F(T)$ denotes the $(\varphi,\Gamma)$-module of $T$ over the base field $\AA_F$. Moreover, every element $x\in \DD_F(T)$ can be uniquely written as $x=x_1v^++x_2v^-$ with $x_i\in\BB^\dagger_{\rig,F}$. It therefore seems natural to make the following definition: for $i=1,2$, let 
  \[ H^1_{\Iw}(F,T)^i=\left\{ x\in \DD_F(T)^{\psi=1}\mid \vp(x_i)=-p\psi(x_i)\right\}.\]
  One can the repeat the above construction to define `new' local conditions $H^1_{f,i}(F(\mu_{p^n}),\Ep)$ for $i=1,2$. If $K$ is a finite extension of $\QQ$, this allows us to define signed Selmer groups $\Sel^i(E\slash K(\mu_{p^n}))$ for $i=1,2$ and for all $n\geq 0$. By passing to the direct limit over $n$, we obtain the Selmer groups $\Sel^i(E\slash K_\infty)$. The details of this construction is given in Section~\ref{supersingular}.

  When $E$ has good ordinary reduction at $p$, we have defined $\Sel^i(E/\QQ(\mu_{p^\infty}))$ for $i=1,2$ in \cite{leiloefflerzerbes10} in the same way as the good supersingular case. To justify the proposed definition of signed Selmer groups over $K_\infty$, we show in Section~\ref{ordinary} that on extending our construction to the good ordinary case, $\Sel^2(E\slash K_\infty)$ again agrees with the usual Selmer group $\Sel(E\slash K_\infty)$ for any finite extension $K$ of $\QQ$.

  In Section~\ref{alter}, we give a more explicit description of the local conditions we use to define the signed Selmer groups in the supersingular case. If $F$ is a finite extension of $\Qp$, we write $F_n=F(\mu_{p^n})$. We define for a large integer $N$, which depends on $F$,
  \[
   \hat{E}_N^i(\calO_{F_n})   := \left\{x\in\hat{E}(\calO_{F_n}): \Tr_{F_n/F_m}x\in \hat{E}(\calO_{F_{m-1}}))\text{ for all }m\in S_{N,i'}^n \text{ and }\Tr_{F_n/F_N}x=0\right\},
  \]
  where
  \begin{align*}
   S_{N,1'}^n&=\{m\in[N+1,n]:m\text{ even}\};\\
   S_{N,2'}^n&=\{m\in[N+1,n]:m\text{ odd}\}.
  \end{align*}
  We show that if we define $\Sel_N^{(i)}(E/K_n)$ by replacing the local conditions at places above $p$ in the definition of $\Sel(E/K_n)$ by these ``jumping conditions", then  for $i=1,2$ we have isomorphisms
  \[
   \Sel_N^{(i)}(E/K_\infty)\cong\Sel^{i}(E/K_\infty)
  \]
  on taking direct limits.

  In Section~\ref{MHG}, we extend the definition of signed Selmer groups to $p$-adic Lie extensions and formulate a $\mathfrak{M}_H(G)$-conjecture, analogous to the one for the good ordinary case in \cite{cfksv}. Finally, we will explain some of the difficulties we encountered when attempting to extract information on the conjecture in Section~\ref{cyclotomicshortexact}.
  
  \vspace{1ex}
  
  {\it Acknowledgements.} We would like to thank John Coates and David Loeffler for their interest, and the latter for many helpful comments. Part of this paper was written while the authors were visiting the University of Warwick; they would like to thank the number theory group for their hospitality.

%+++++++++++++++++++++++++++++++++++++++++++++++++++++++++++++++++++++++++++++++++++++++++++++++++++++++++++++++++++++  
  
 \section{Notation and background}  
 
  Let $F$ be a finite extension of $\QQ_p$. Write $\Rep_{\ZZ_p}(\calG_F)$ (resp. $\Rep_{\QQ_p}(\calG_F)$) for the category of finitely generated $\ZZ_p$-modules (resp. finite-dimensional $\QQ_p$-vector spaces) with a continuous action of $\calG_F$. 

  For an integer $n\geq 1$, we write $\QQ_{p,n}=\QQ_p(\mu_{p^n})$, $\QQ_{p,\infty}=\varinjlim \QQ_{p,n}$ and $\Gamma=\Gal(\QQ_{p,\infty}\slash\QQ_p)$. More generally, if $F$ is a finite extension of $\QQ_p$, we write $F_n=F(\mu_{p^n})$, $F_\infty=\varinjlim F_n$, $H_F=\Gal(\overline{\QQ_p}\slash F_\infty)$ and $\Gamma_F=\Gal(F_{\infty}\slash F)$. For $T\in\Rep_{\ZZ_p}(\calG_F)$, define $H^1_{\Iw}(F,T)=\varprojlim H^1(F_n,T)$ where the connecting maps are corestrictions $\cor_{n/m}:H^1(F_n,T)\rightarrow H^1(F_m,T)$ for $n\ge m$. If $V\in\Rep_{\QQ_p}(\calG_F)$, let $H^1_{\Iw}(F,V)=H^1_{\Iw}(F,T)\otimes_{\ZZ_p}\QQ_p$, where $T$ is a $\calG_F$-invariant lattice of $V$. If $G$ is a compact $p$-adic Lie group, we write $\Lambda(G)=\ZZ_p[[G]]$ for its completed group ring over $\ZZ_p$. 
 
  For a finite set $S$ of primes of $\QQ$, let $F_S$ denote the maximal algebraic extension of $\QQ$ unramified outside $S$. For an extension $K$ of $\QQ$ contained in $F_S$, we write $G_S(K)=\Gal(F_S\slash K)$. 

%+++++++++++++++++++++++++++++++++++++++++++++++++++++++++++++++++++++++++++++++++++++++++++++++++++++++++++++++++++++  
 
  \subsection{Rings of periods}\label{overcon}
 
   Let $\overline{\QQ_p}$ be an algebraic closure of $\QQ_p$, and write $\CC_p$ for its $p$-adic completion. Let $\calO_{\CC_p}$ be its ring of integers. Define
   \[ \tilde{\EE}=\varprojlim_{x\rightarrow x^p} \CC_p = \left\{ (x^{(0)},x^{(1)},\dots)\mid \left(x^{(i+1)}\right)^p=x^{(i)}\right\},\]
   and let $\tilde{\EE}^+=\left\{x\in\tilde{\EE}\mid x^{(0)}\in \calO_{\CC_p}\right\}$. If $x=(x^{(i)})$ and $y=y^{(i)}$ are elements of $\tilde{\EE}$, define their sum and product by
   \begin{align*}
    (xy)^{(i)} &= x^{(i)}y^{(i)}\\
    (x+y)^{(i)} &= \lim_{n\rightarrow+\infty}\left(x^{(i+n)}+y^{(i+n)}\right)^{p^n}.
   \end{align*}
   Under these operations, $\tilde{\EE}$ is an algebraically closed field of characteristic $p$. Note that by construction $\tilde{\EE}$ is equipped with a continuous action of $\calG_{\QQ_p}$. Define a valuation on $\tilde{\EE}$ by $v_{\tilde{\EE}}(x)=v_p(x^{(0)})$. Let $\varepsilon=(\varepsilon^{(i)})$ be a fixed element of $\tilde{\EE}$ such that $\varepsilon^{(0)}=1$ and $\varepsilon^{(1)}\neq 1$, and let $\overline{\pi}=\varepsilon-1$. Let $\EE_{\QQ_p}=\FF_p((\overline{\pi}))$, and define $\tilde{\EE}$ to be a separable closure of $\EE_{\QQ_p}$ in $\EE$. Then $\EE$ is equipped with a continuous action of $\calG_{\QQ_p}$, and one can show that $\EE^{H_{\QQ_p}}=\EE_{\QQ_p}$.
  
   Let $\tilde{\AA}=W(\tilde{\EE})$ be the ring of Witt vectors of $\tilde{\EE}$ and
   \[ \tilde{\BB}=\tilde{\AA}[p^{-1}]=\left\{\sum_{k\gg -\infty}p^k[x_k]\mid x_k\in\tilde{\EE}\right\},\]
   where $[x]$ denotes the Teichm\"uller lift of $x\in\tilde{\EE}$. By construction, both rings are equipped with continuous semi-linear actions of a Frobenius operator $\varphi$ and $\calG_{\QQ_p}$. Let $\pi=[\varepsilon]-1$ and $q=\vp(\pi)\slash\pi$, and define $\AA_{\QQ_p}$ to be the completion of $\ZZ_p[[\pi]][\pi^{-1}]$ in the $p$-adic topology. Then $\AA_{\QQ_p}$ is closed under the actions of $\vp$ and $\calG_{\QQ_p}$, and moreover the action of $\calG_{\QQ_p}$ factors through $\Gamma_{\QQ_p}$. Let $\BB$ be the $p$-adic completion of the maximal unramified extension of $\BB_{\QQ_p}=\AA_{\QQ_p}[p^{-1}]$ in $\tilde{\BB}$, and let $\AA=\BB\cap \tilde{\AA}$. These rings are stable under the actions of $\vp$ and $\calG_{\QQ_p}$. For a finite extension $F$ of $\QQ_p$, put $\AA_F=\AA^{H_F}$. If $F_\infty$ is Galois over $\QQ_p$, then $\AA_F$ is equipped with a continuous action of $\calG_F$ which commutes with $\vp$. 
 
   For a $p$-adic representation $T\in\Rep_{\ZZ_p}(\calG_{\QQ_p})$ (resp. $V\in\Rep_{\QQ_p}(\calG_{\Qp})$), define $\DD_F(T)=(\AA\otimes_{\ZZ_p}T)^{H_F}$ (resp. $\DD_F(V)=(\BB\otimes_{\QQ_p}V)^{H_F}$). Then $\DD_F(T)$ (resp. $\DD_F(V)$) is a free finitely generated module over $\AA_F$ of rank $d=\rank_{\ZZ_p}(T)$ (resp. a finite dimensional vector space over $\BB_{\QQ_p}$ of dimension $d=\dim_{\QQ_p}(V)$), equipped with commuting semi-linear actions of $\vp$ and $\Gamma_F$.  Note that $\DD_{F}(T)=\DD_{\QQ_p}(T)\otimes_{\AA_{\QQ_p}}\AA_F$ (resp. $\DD_F(V)=\DD_{\QQ_p}(V)\otimes_{\BB_{\QQ_p}}\BB_F$).
 
   \begin{remark}\label{extendaction}
    If $F_\infty$ is Galois over $\QQ_p$, then the action of $\Gamma_F$ extends to an action of $G_F=\Gal(F_\infty\slash\QQ_p)$. Moreover, the action of $G_F$ commutes with the action of $\vp$.
   \end{remark}

   Every element $x\in\tilde{\BB}$ can be written uniquely of the form $x=\sum_{k\gg-\infty}p^k[x_k]$ with $x_k\in\tilde{\EE}$. For an integer $n\ge0$, define
   \[ \tilde{\BB}^{\dagger,n}=\left\{ x\in\tilde{\BB}\mid \lim_{k\rightarrow+\infty}\left(k+p^{-n}v_{\tilde{\EE}}(x_k)\right)=+\infty\right\}\]
   and let $\BB^{\dagger,n}=\tilde{\BB}^{\dagger,n}\cap\BB$ and $\BB^{\dagger,n}_F=\big(\BB^{\dagger,n}\big)^{H_F}$ for any finite extension $F$ of $\QQ_p$. Also, let $\tilde{\AA}^{\dagger,n}=\{x\in\tilde{\BB}^{\dagger,n}\cap\tilde{\AA}\mid k+ p^{-n}v_{\tilde{\EE}}(x_k)\geq 0 \hspace{1ex} \text{for all $k$}\}$, $\AA^{\dagger,n}=\tilde{\AA}^{\dagger,n}\cap \AA$ and $\AA^{\dagger,n}_F=\big(\AA^{\dagger,n}\big)^{H_F}$. Finally, define $\BB^\dagger=\bigcup_n\BB^{\dagger,n}$, $\AA^\dagger=\bigcup_n\AA^{\dagger,n}$, $\BB_F^\dagger=\bigcup_n\BB_F^{\dagger,n}$ and $\AA_F^\dagger=\bigcup_n\AA_F^{\dagger,n}$. Explicitly, one can describe the ring $\AA^{\dagger,n}_F$ for $n\gg 0$ as follows (c.f.~\cite[Proposition 1.4]{berger02}): there exists $N_F>0$ and $\pi_F\in\AA^{\dagger,N_F}_F$ whose reduction mod $p$ is a uniformizer $\overline{\pi_F}$ of $\EE_F$. Moreover, if $n\geq N_F$, then every element $x\in\BB^{\dagger,n}_F$ can be written as $\sum_{k\in\ZZ}a_k\pi_F^k$, where the $a_k$ are elements in the maximal unramified extension $F'$ of $\QQ_p$ in $F_\infty$, and where the series $\sum_{k\in\ZZ}a_kX^k$ is holomorphic and bounded on the annulus $p^{-1/e_F p^{n-1}(p-1)}\leq\mid X\mid<1$.
 
   Let $F$ be a finite extension of $\QQ_p$. For $V\in \Rep_{\QQ_p}(\calG_{\QQ_p})$, define $\DD_F^{\dagger,r}(V)=\big(\BB^{\dagger,r}\otimes_{\QQ_p}V\big)^{H_F}$ and $\DD_F^{\dagger}(V)=\big(\BB^{\dagger}\otimes_{\QQ_p}V\big)^{H_F}$.  Note that $\DD^\dagger_F(V)=\DD^\dagger_{\QQ_p}(V)\otimes_{\BB^\dagger_{\QQ_p}}\BB^{\dagger}_F$. The main result of~\cite{cherbonniercolmez98} shows that every $p$-adic representation $V$ of $\calG_{\QQ_p}$ is overconvergent, i.e. there exists $r(V)>0$ such that 
   \[ \DD_{\QQ_p}(V)=\BB_{\QQ_p}\otimes_{\BB^{\dagger,r(V)}_{\QQ_p}}\DD^{\dagger,r(V)}(V).\]

   If $V$ is a crystalline representation of $\calG_{\QQ_p}$, then a stronger result is true: $V$ is of finite hight, i.e.  let $\tilde{\BB}^+=W(\tilde{\AA}^+)[p^{-1}]$, $\BB^+=\BB\cap\tilde{\BB}^+$ and $\BB^+_{\QQ_p}=(\BB^+)^{H_{\QQ_p}}$, and define $\DD^+_{\QQ_p}(V)=(\BB^+\otimes_{\QQ_p}V)^{H_{\QQ_p}}$. Then
   \[ \DD_{\QQ_p}(V)=\DD^+_{\QQ_p}(V)\otimes_{\BB^+_{\QQ_p}}\BB_{\QQ_p}.\]
 
  \subsection{The Robba ring} 
  
   We write $\BB_{\rig,\QQ_p}^+$ for the set of $f(\pi)$ where $f(X)\in\QQ_p[[X]]$ converges everywhere on the open unit $p$-adic disc. In particular, $t=\log(1+\pi)\in\Brig$. Let $F$ be a finite extension of $\QQ_p$. For $n\ge0$, define $\BB^{\dagger,n}_{\rig,F}$ to be the completion of $\BB^{\dagger,n}_{F}$ in the Fr\'echet topology, and define the Robba ring $\BB^{\dagger}_{\rig,F}=\bigcup_n \BB^{\dagger,n}_{\rig,F}$. By \cite[Lemme~3.13]{berger02} we have 
   \[ \BB^{\dagger,n}_{\rig,F}=\BB^{\dagger,n}_{\rig,\QQ_p}\otimes_{\BB^{\dagger,n}_{\QQ_p}}\BB^{\dagger,n}_F.\]
   By \cite[Proposition~I.3]{berger03}, we can identify $\BB^{\dagger,n}_{\rig,\QQ_p}$ with the ring of power series
   \[ \BB^{\dagger,n}_{\rig,\QQ_p}=\big\{ f(\pi)\mid f(X)\in\QQ_p\{\{X\}\} \hspace{1ex}\text{converges for $p^{-1/p^{n-1}(p-1)}\leq\mid X\mid<1$}\big\}.\]
   Note that the actions of $\vp$ and $\Gamma_F$ extend to $\BB^+_{\rig,F}$ and $\BB^\dagger_{\rig,F}$. 
 
   The most important application of $\BB^\dagger_{\rig,F}$ is Berger's comparison isomorphism: if $V$ is a crystalline representation of $\calG_F$, we write $\DD_{\cris}(V)$ and $\NN(V)$ for the Dieudonn\'{e} module and the Wach module of $V$ respectively, then there is a canonical isomorphism 
   \begin{equation}\label{comparison}
    \iota: \DD^\dagger_F(V)\otimes_{\BB^\dagger_F}\BB^\dagger_{\rig,F}[t^{-1}]\cong \DD_{\cris}(V)\otimes_{F^{\mathrm{nr}}}\BB^\dagger_{\rig,F}[t^{-1}]
   \end{equation}
   which is compatible \[ \Sel^i(E\slash L_\infty)=\ker\left( H^1(G_S(L_\infty),\Ep)\rightarrow \bigoplus_{v\in S}J^i_v(L_\infty)\right).\]  with the actions of $\calG_F$ and $\vp$. If $V$ is a crystalline representation of $\calG_{\QQ_p}$, then we indeed have a comparison isomorphism
   \[
    \iota: \NN(V)\otimes_{\BB^+_{\QQ_p}}\BB^+_{\rig,\QQ_p}[t^{-1}]\cong \DD_{\cris}(V)\otimes_{\Qp}\BB^+_{\rig,\QQ_p}[t^{-1}].
   \]

  \subsection{The operator $\psi$}

   Note that the extension $\EE$ over $\vp(\EE)$ is inseparable of degree $p$. One can hence define a left inverse $\psi$ of $\vp$ on $\AA$. Explicitly, a basis of $\AA$ over $\vp(\AA)$ is given by $1,1+\pi,\dots,(1+\pi)^{p-1}$. For $x\in\AA$, we may write $x=\sum_{i=0}^{p-1}\vp(x_i)(1+\pi)^i$ where $x_i\in\AA$. We set $\psi(x)=x_0$.
   
   If $F$ is a finite extension of $\QQ_p$ and $V\in\Rep_{\ZZ_p}(\calG_F)$ or $\Rep_{\QQ_p}(\calG_F)$, then $\psi$ extends to a left inverse of $\vp$ $\DD_F(V)$. If $F_\infty$ is Galois over $\QQ_p$, then by Remark~\ref{extendaction} we have an action of $G_F$ on $\DD_F(T)$ which commutes with $\vp$ and hence with $\psi$.

  \subsection{Tate twists}
   
   Let $F$ be a finite extension of $\Qp$. We write $\chi$ for the $p$-cyclotomic character of $\calG_F$. If $m$ is an integer and $V\in\Rep_{\QQ_p}(\calG_F)$ (resp. $T\in\Rep_{\ZZ_p}(\calG_F)$), we denote by $V(m)$ (resp. $T(m)$) the $\calG_F$-representations  $V\otimes_{\Qp}\Qp\cdot e_m$ (resp. $T\otimes_{\Zp}\Zp\cdot e_m$) where $\calG_F$ acts on $e_m$ via $\chi^m$. In particular, we have
   \[
   \DD_{\cris}(V(m))=\DD_{\cris}(V)\otimes t^{-m}e_m,\quad\NN(T(m))=\NN(T)\otimes\pi^{-m}e_m\quad\text{and}\quad\NN(V(m))=\NN(V)\otimes\pi^{-m}e_m.
   \]

  \subsection{The Herr complex}

   We first review some results from $p$-adic Hodge theory. Let $F$ be a finite extension of $\Qp$. Let $T\in\Rep_{\ZZ_p}(\calG_{\Qp})$. Recall the following result from~\cite{herr98} (see also~\cite[\S 2]{cherbonniercolmez99}). Let $\gamma$ be a topological generator of $\Gamma_F$. For $f=\vp$ or $\psi$, define the complex
   \[ \calC^\bullet_{f,\gamma}\big(\DD_F(T)\big): 0\rightarrow \DD_F(T)\rTo^{\alpha_f}\DD_F(T)\oplus\DD_F(T)\rTo^{\beta_f}\DD_F(T)\rightarrow 0,\]
   where $\alpha_f(x)=\big((\gamma-1)x,(f-1)x\big)$ and $\beta_f(x,y)=(f-1)x-(\gamma-1)y$. Denote by $H^i\big(\calC^\bullet_{f,\gamma}(\DD_F(V))\big)$ the $i$-th cohomology group of the complex. 
   
   \begin{theorem}
    For $f=\vp$ or $\psi$, $H^i\big(\calC^\bullet_{f,\gamma}(\DD_F(T))\big)$ is canonically isomorphic to $H^i(F,T)$. In particular, if $(x,y)\in\DD_F(T)^{\oplus 2}$ satisfies $\beta_\vp(x,y)=0$, then the corresponding cohomology class in $H^1(F,T)$ is given by the cocycle
    \[ c_{(x,y)}: \sigma\mapsto \frac{\sigma-1}{\gamma-1}x-(\sigma-1)z,\]
    where $z\in\AA\otimes_{\Zp}T$ is such that $(\vp-1)z=y$.
   \end{theorem}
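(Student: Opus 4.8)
The plan is to prove this by reducing the computation of Galois cohomology to the cohomology of the Herr complex, following the approach of Herr~\cite{herr98} and the reformulation in terms of $(\vp,\Gamma)$-modules. First I would recall that the $(\vp,\Gamma)$-module functor $T\mapsto\DD_F(T)$ induces, via descent along the Artin--Schreier-type covering $\EE/\vp(\EE)$ and the arithmetic profinite extension $F_\infty/F$, an equivalence of categories between $\Rep_{\Zp}(\calG_F)$ and étale $(\vp,\Gamma_F)$-modules over $\AA_F$. The key input is the short exact sequence of $\calG_F$-modules
\[ 0\rightarrow T\rightarrow \AA\otimes_{\Zp}T\xrightarrow{\vp-1}\AA\otimes_{\Zp}T\rightarrow 0,\]
where surjectivity of $\vp-1$ on $\AA\otimes_{\Zp}T$ follows from the fact that $\vp-1$ is surjective on $\EE$ (hence on $\AA$ by successive approximation) together with a standard dévissage. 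Taking $H_F$-invariants and then the group cohomology of $\Gamma_F$ (whose cohomological dimension is $1$, generated topologically by $\gamma$) produces the two-step filtration that realises the Herr complex $\calC^\bullet_{\vp,\gamma}(\DD_F(T))$ as computing $H^i(F,T)$; the case $f=\psi$ is then deduced from the case $f=\vp$ using that $\psi$ is a left inverse of $\vp$, so that the natural map of complexes $\calC^\bullet_{\psi,\gamma}\to\calC^\bullet_{\vp,\gamma}$ (or its quasi-inverse) is a quasi-isomorphism — this last point uses that $\vp-\psi\circ\vp$ vanishes and a comparison of the induced maps on cohomology.

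For the explicit cocycle formula in degree one, I would run the connecting-homomorphism construction through the double complex computing $H^*(\calG_F,-)$ via the composite of $H^*(H_F,-)$ and $H^*(\Gamma_F,-)$. Given $(x,y)\in\DD_F(T)^{\oplus 2}$ with $\beta_\vp(x,y)=(\vp-1)x-(\gamma-1)y=0$, one first lifts $y$ along $\vp-1$: by surjectivity there is $z\in\AA\otimes_{\Zp}T$ with $(\vp-1)z=y$. Then for $\sigma\in\calG_F$ one checks that $\frac{\sigma-1}{\gamma-1}x$ makes sense (the element $x\in\DD_F(T)$ satisfies $(\gamma-1)x=(\vp-1)y'$ for a suitable $y'$, and one uses the standard device of dividing by $\gamma-1$ inside the $\Lambda(\Gamma_F)$-module structure after passing through the cocycle condition, or equivalently works with the telescoping identity $\sigma-1=\sum_{i}\gamma^i(\gamma-1)$ on the relevant submodule), and that $\sigma\mapsto\frac{\sigma-1}{\gamma-1}x-(\sigma-1)z$ is a $1$-cocycle valued in $T=(\AA\otimes_{\Zp}T)^{\vp=1}$: applying $\vp-1$ kills the first term (since $\vp$ commutes with $\Gamma_F$ and $\vp x=x$ is \emph{not} assumed — rather one uses $(\vp-1)x = (\gamma-1)y$ to cancel against $(\vp-1)(\sigma-1)z=(\sigma-1)y$), so the expression indeed lands in $T$, and the cocycle relation is a direct verification. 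One then identifies this with the image of $(x,y)$ under the canonical isomorphism $H^1(\calC^\bullet_{\vp,\gamma}(\DD_F(T)))\cong H^1(F,T)$ by unwinding the edge maps of the Hochschild--Serre spectral sequence for $1\to H_F\to\calG_F\to\Gamma_F\to 1$.

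The main obstacle I expect is the bookkeeping around the operator $\frac{\sigma-1}{\gamma-1}$: this is not literally division in $\AA_F$ but an operation on cocycles, and making it precise requires either passing to the Iwasawa-algebra completion $\Lambda(\Gamma_F)$ and checking that $x$ lies in the image of $\gamma-1$ on the appropriate module, or else reformulating everything in terms of continuous cochains and verifying that the naive formula gives a well-defined class independent of all choices (the lift $z$, the generator $\gamma$). Once the surjectivity of $\vp-1$ and the cohomological dimension one of $\Gamma_F$ are in hand, the rest is essentially diagram-chasing through Herr's double complex; I would cite~\cite{herr98} and~\cite{cherbonniercolmez99} for the formal framework and concentrate the written proof on verifying the displayed cocycle formula, since that is the part actually used later in the construction of the signed Selmer groups.
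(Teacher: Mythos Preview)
The paper does not actually supply a proof of this theorem: it is stated as a result recalled from the literature, introduced by ``Recall the following result from~\cite{herr98} (see also~\cite[\S 2]{cherbonniercolmez99})'', with no accompanying \texttt{proof} environment. Your sketch is therefore not competing with any argument in the paper; it is a correct outline of the standard proof due to Herr, and the explicit cocycle formula is exactly the one derived in \cite[\S I.5]{cherbonniercolmez99} by unwinding the Hochschild--Serre spectral sequence for $1\to H_F\to\calG_F\to\Gamma_F\to 1$ together with the Artin--Schreier sequence $0\to T\to\AA\otimes T\xrightarrow{\vp-1}\AA\otimes T\to 0$.

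One small point of bookkeeping in your write-up: you momentarily write the cocycle condition as $(\gamma-1)x=(\vp-1)y'$ before correcting to $(\vp-1)x=(\gamma-1)y$; only the latter is what $\beta_\vp(x,y)=0$ says, and it is precisely this relation that makes $(\vp-1)\big(\tfrac{\sigma-1}{\gamma-1}x-(\sigma-1)z\big)=\tfrac{\sigma-1}{\gamma-1}(\gamma-1)y-(\sigma-1)y=0$, so the cocycle lands in $T$. Your concern about the meaning of $\tfrac{\sigma-1}{\gamma-1}$ is the right one to flag; in the references it is made precise either by working with continuous cochains and a limiting argument over $\Gamma_F/\Gamma_F^{p^n}$, or by observing that on the image of $\gamma-1$ the operator is well defined up to an element killed by $\gamma-1$, which only changes the cocycle by a coboundary.
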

  
   \begin{theorem}\label{comp}
    We have an $\Lambda(\Gamma_F)$-equivariant isomorphism $H^1_{\Iw}(F,T)\cong\DD_{F}(T)^{\psi=1}$. If $F_\infty$ is Galois over $\Qp$, then the isomorphism is compatible with the action of $G=\Gal(F_\infty\slash \Qp)$.
   \end{theorem}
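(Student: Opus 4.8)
The plan is to construct the isomorphism layer by layer in the cyclotomic tower out of the Herr complex computation of the preceding theorem, and then pass to the inverse limit over corestriction maps; this is the approach of Fontaine and of Cherbonnier--Colmez. The starting observation is that for every $n$ one has $(F_n)_\infty = F_\infty$, hence $H_{F_n} = H_F$, so that $\DD_{F_n}(T) = \DD_F(T)$ as a module equipped with $\vp$ and $\psi$; the only change as $n$ varies is that the $\Gamma$-action is restricted to the open subgroup $\Gamma_{F_n}\subseteq\Gamma_F$. Fixing a topological generator $\gamma_n$ of $\Gamma_{F_n}$ (a suitable power of a fixed $\gamma$), the preceding theorem in its $\psi$-form gives $H^1(F_n,T)\cong H^1\big(\calC^\bullet_{\psi,\gamma_n}(\DD_F(T))\big)$, all of these complexes being built from the single module $\DD_F(T)$.

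Next I would write down the map. For $x\in\DD_F(T)^{\psi=1}$ the pair $(x,0)$ satisfies $\beta_\psi(x,0) = (\psi-1)x = 0$, and is a coboundary exactly when $x\in(\gamma_n-1)\DD_F(T)^{\psi=1}$; it therefore defines a class $\kappa_n(x)\in H^1(F_n,T)$. Viewing $\calC^\bullet_{\psi,\gamma_n}(\DD_F(T))$ as the total complex of $\gamma_n-1$ acting on $[\DD_F(T)\xrightarrow{\psi-1}\DD_F(T)]$, one in fact obtains a short exact sequence
\[0\to \DD_F(T)^{\psi=1}/(\gamma_n-1)\to H^1(F_n,T)\to \big(\DD_F(T)/(\psi-1)\big)^{\gamma_n=1}\to 0\]
whose left-hand inclusion is the map induced by $\kappa_n$. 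Using the explicit description of corestriction on Herr complexes --- on the degree-one term $\DD_F(T)^{\oplus 2}$ it is $(a,b)\mapsto\big(a,\ (1+\gamma_m+\cdots+\gamma_m^{p^{n-m}-1})b\big)$, i.e. the identity in the ``$\psi$-slot'' and the norm in the ``$\gamma$-slot'' --- one checks that $\cor_{n/m}\kappa_n(x) = \kappa_m(x)$, so that $\kappa\colon x\mapsto(\kappa_n(x))_n$ is a well-defined map $\DD_F(T)^{\psi=1}\to H^1_{\Iw}(F,T)$. It is $\Gamma_F$-equivariant because the construction of the Herr isomorphism is compatible with the residual $\Gamma_F$-action on both sides, and it extends to a $\Lambda(\Gamma_F)$-linear map by continuity.

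To see that $\kappa$ is bijective I would apply $\varprojlim_n$ to the displayed short exact sequence: the left-hand terms form a surjective system with inverse limit $\DD_F(T)^{\psi=1}$ (using that $\DD_F(T)^{\psi=1}$ is a finitely generated $\Lambda(\Gamma_F)$-module, a theorem of Cherbonnier--Colmez), while the right-hand terms, whose transition maps are the norm operators $1+\gamma_m+\cdots+\gamma_m^{p^{n-m}-1}$ lying in ever-higher powers of the augmentation ideal, satisfy $\varprojlim_n = \varprojlim^1_n = 0$ (here one invokes Colmez's surjectivity of $\psi$, so that $\DD_F(T)/(\psi-1)$ is a finitely generated torsion $\Lambda(\Gamma_F)$-module, together with a standard analysis of the norm maps). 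Hence $H^1_{\Iw}(F,T)\cong\varprojlim_n H^1(F_n,T)\cong\DD_F(T)^{\psi=1}$ via $\kappa$. Alternatively one can package this by Shapiro's lemma as $H^1_{\Iw}(F,T)=\varprojlim_n H^1\big(F,\ \ZZ_p[\Gamma_F/\Gamma_{F_n}]\otimes_{\ZZ_p}T\big)$ and run the Herr complex, with a fixed $\gamma$, on the ``big'' module $\Lambda(\Gamma_F)\,\widehat{\otimes}\,\DD_F(T)$; the content is the same. For the final assertion, when $F_\infty/\Qp$ is Galois each $\Gamma_{F_n}$ is normal in $G=\Gal(F_\infty/\Qp)$ (the cyclotomic character being $G$-equivariant), so $G$ acts on the whole tower and hence on $H^1_{\Iw}(F,T)$, while by Remark~\ref{extendaction} and the discussion of $\psi$ it acts on $\DD_F(T)$ commuting with $\vp$ and $\psi$, hence preserving $\DD_F(T)^{\psi=1}$; since $\kappa$ is assembled from the $\calG_F$-equivariant Herr isomorphism in a way natural in the base field, it intertwines the two $G$-actions, and this is formal once the rest is in place.

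The main obstacle is the inverse-limit bookkeeping in the third step --- in particular, showing that the ``defect'' terms $\big(\DD_F(T)/(\psi-1)\big)^{\gamma_n=1}$ contribute nothing in the limit. This rests on the module-theoretic structure of $\DD_F(T)$ over $\Lambda(\Gamma_F)$ (finite generation of $\DD_F(T)^{\psi=1}$, near-surjectivity of $\psi-1$), which ultimately comes from the work of Cherbonnier--Colmez and Colmez; for this reason it is cleanest to establish the $H^1$-statement and the analogous description of $H^2_{\Iw}(F,T)$ together. The corestriction computation in the second step is elementary but must be carried out carefully with the cocycle description from the preceding theorem.
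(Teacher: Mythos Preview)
Your sketch is correct and follows the Fontaine/Cherbonnier--Colmez approach; the paper's own proof consists solely of a reference to Th\'eor\`eme~II.1.3 of Cherbonnier--Colmez, so you have in effect reconstructed the argument behind that citation.
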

   \begin{proof}
    See \cite[Th\'eor\`eme II.1.3]{cherbonniercolmez99}.
   \end{proof}

   From now on, we will identify  $H^1_{\Iw}(F,T)$ with $\DD_{F}(T)^{\psi=1}$ under the isomorphism given by Theorem~\ref{comp}.

%++++++++++++++++++++++++++++++++++++++++++++++++++++++++++++++++++++++++++++++++++++++++++++++++++++++++++++++++++++++++++++++++++++++++++++++
%++++++++++++++++++++++++++++++++++++++++++++++++++++++++++++++++++++++++++++++++++++++++++++++++++++++++++++++++++++++++++++++++++++++++++++++

 \section{The signed Selmer groups}
 
  Let $E$ be an elliptic curve defined over $\QQ$, and fix a prime $p\geq 3$. In this section, we use the theory of $(\vp,\Gamma)$-modules to define signed Selmer groups $\Sel^i(E\slash L(\mu_{p^\infty}))$ for any number field $L$, when $E$ has either good supersingular or good ordinary reduction at $p$. If $E$ has good ordinary reduction at $p$, then Theorem~\ref{sameSelmergroup} shows that $\Sel^2(E\slash L_\infty)$ agrees with the usual Selmer group $\Sel(E\slash L_\infty)$.

%++++++++++++++++++++++++++++++++++++++++++++++++++++++++++++++++++++++++++++++++++++++++++++++++++++++++++++++++++++++++++++++++++++++++++++++     
%++++++++++++++++++++++++++++++++++++++++++++++++++++++++++++++++++++++++++++++++++++++++++++++++++++++++++++++++++++++++++++++++++++++++++++++
  
 \subsection{Good supersingular elliptic curves}\label{supersingular}
 
  Assume throughout this section that $a_p=0$. Let $T_p(E)$ be the Tate module of $E$ at $p$ and write $V=T_pE\otimes_{\Zp}\Qp$, so as a representation of $\calG_{\Qp}$, $V$ is crystalline with Hodge-Tate weights $0,1$. Let $v_1$ be a basis of $\Fil^0\DD_{\cris}(V)$, and extend it to a basis $v_1,v_2$ of $\DD_{\cris}(V)$ such that the matrix of $\vp$ on $\DD_{\cris}(V)$ in this basis is $\begin{pmatrix} 0 & -1 \\ p & 0\end{pmatrix}$. 

  Define $\log^-(1+\pi)=\prod_{i\geq 0}\frac{\vp^{2i}(q)}{p}$ and $\log^+(1+\pi)=\prod_{i\geq 0}\frac{\vp^{2i+1}(q)}{p}$, which are elements of $\BB^+_{\rig,\Qp}$. Since the Hodge-Tate weights of $V$ are non-negative, we have 
  \[ \NN(T)\subset \DD_{\cris}(V)\otimes\BB^+_{\rig,\Qp}\]
  by \cite[Proposition II.2.1]{berger03}, and it follows form Appendice (3) in {\em op.cit.} that a basis $n_1,n_2$ of $\NN(T)$ is given by $\begin{pmatrix} n_1 \\ n_2\end{pmatrix} = M\begin{pmatrix} v_1 \\ v_2\end{pmatrix}$, where 
  \begin{equation} M= \begin{pmatrix} \log^-(1+\pi) & 0 \\ 0 & \log^+(1+\pi) \end{pmatrix}.\label{matrixM}\end{equation}
  Let $K$ be a finite extension of $\Qp$. For $x\in \DD^\dagger_K(T)^{\psi=1}$, we write 
  \[x=x_1v_1+x_2v_2=x_1'n_1+x_2'n_2\] 
  with $x_i\in\BB^{\dagger,N}_{\rig,K}$ and $x_i'\in \AA^{\dagger,N}_{K}$. By \eqref{matrixM}, we have
  \begin{equation}\label{basechange}
   x_1=x_1'\log^-(1+\pi)\quad\text{and}\quad x_2=x_2'\log^+(1+\pi).
  \end{equation}

  \begin{definition}
   For $i=1,2$, let 
   \[ H^1_{\Iw}(K,T)^i=\left\{x\in\DD^\dagger_K(T)^{\psi=1}:\vp(x_i)=-p\psi(x_i)\right\}.\]
   For $n\geq 1$, define $H^1(K_n,T)^i$ to be the image of $H^1_{\Iw}(K,T)^i$ under the natural projection map $H^1_{\Iw}(K,T)\rightarrow H^1(K_n,T)$. 
  \end{definition}
  
  \begin{remark}\label{relationtoColeman}
   As show in~\cite[\S 5.2.1]{leiloefflerzerbes10}, we have $H^1_{\Iw}(\Qp,T)^i=H^1_{\Iw}(\Qp,T)\cap\ker(\underline{\Col}_i)$ for the Coleman maps
   \[ \underline{\Col}_i: H^1_{\Iw}(\Qp,T)\rTo\Lambda(\Gamma)\]
   defined in {\em op.cit.}
  \end{remark}

  \begin{definition}
   Let $H^1_{f,i}(K_n,\Ep)$ be the orthogonal complement of $H^1(K_n,T)^i$ under the Pontryagin duality
   \[ [\sim,\sim]: H^1(K_n,T)\times H^1(K_n,\Ep)\rTo \Qp\slash\Zp\]
   for $i=1,2$.
  \end{definition}

  We now return to the global situation. As above, let $L$ be a finite extension of $\QQ$. For a prime $\nu$ of $L$, denote by $L_\nu$ be the completion of $L$ at $\nu$, and let $L_{\nu,n}=L_\nu(\mu_{p^n})$. Let $S$ be the finite set of primes of $\QQ$ containing $p$, all the primes where $E$ has bad reduction and the infinite prime. Let $i=1,2$. For all $v\in S$, define 
   \[ J^i_v(L_n)= \bigoplus_{w_n\mid v}  \frac{H^1(L_{w_n,n},\Ep)}{H^1_{f,i}(L_{w_n,n},\Ep)},\] 
   where the direct sum is taken over all primes $w_n$ of $L_n$ above $v$ and $H^1_{f,i}(L_{w_n,n},\Ep)=H^1_f(L_{w_n,n},\Ep)$ whenever $v\neq p$. We write $J^i_v(L_\infty)=\varinjlim J^i_v(L_n)$.  
  
   \begin{definition}
    For $i=1,2$, define 
    \[ \Sel^i(E\slash L_\infty)=\ker\left( H^1(G_S(L_\infty),\Ep)\rTo \bigoplus_{v\in S}J^i_v(L_\infty)\right).\] 
   \end{definition}
  
   For $n\geq 0$, we also define
    \[ \Sel^i(E\slash L_n)=\ker\left( H^1(G_S(L_n),\Ep)\rTo \bigoplus_{v\in S}J^i_v(L_{n,v})\right).\] 
   Taking direct limits then gives
    \[\Sel^i(E\slash L_\infty)=\varinjlim \Sel^i(E\slash L_n).\]

%+++++++++++++++++++++++++++++++++++++++++++++++++++++++++++++++++++++++++++++++++++++++++++++++++++++++++++++++++++++++++++++++++++++++++++++++

  \subsection{Good ordinary elliptic curves}\label{ordinary}
  
   In this section, let $E$ be an elliptic curve defined over $\QQ$ with good ordinary reduction at a prime $p\geq 3$. As above, let $S$ be the finite set of primes of $\QQ$ containing $p$, all the primes where $E$ has bad reduction and the infinite prime. 

%+++++++++++++++++++++++++++++++++++++++++++++++++++++++++++++++   
   
   \subsubsection{Coleman maps and signed Selmer groups}
   
    We first recall our construction of the signed Selmer groups from \cite{leiloefflerzerbes10}. Let $\bar{\nu}_1,\bar{\nu}_2$ and $\bar{n}_1,\bar{n}_2$ be the bases of $\DD_{\cris}(V(-1))$ and $\NN(V(-1))$, respectively, as defined in \cite[\S 3.2]{leiloefflerzerbes10}. In particular, if $\hat{E}$ denotes the formal group of $E$ and $\hat{V}=T_p\hat{E}\otimes_{\ZZ_p}\QQ_p$, then $\bar{\nu}_1$ is a basis vector of $\DD_{\cris}(\hat{V}(-1))$ and $\bar{n}_1=\bar{\nu}_1$ is a basis of $\NN(\hat{V}(-1))$. If $M'$ is the change of basis matrix with 
    \begin{equation}\label{matrix}
      \begin{pmatrix}\bar{\nu}_1\\\bar{\nu}_2\end{pmatrix}=M'\begin{pmatrix}\bar{n}_1\\\bar{n}_2\end{pmatrix},
    \end{equation}
    then $M'$ is lower triangular, with $1$ and $\frac{t}{\pi}$ on the diagonal. If $x\in\NN(V)$, then there exist unique $x_1,x_2\in\BB_{\Qp}^+$ such that $x=(x_1\bar{n}_1+x_2\bar{n}_2)\otimes\pi^{-1}e_1$. By \eqref{matrix}, we can find unique $x_1',x_2'\in\Brig$ such that
    \begin{equation}\label{secondform}
     x=(x_1'\bar{\nu}_1+x_2'\bar{\nu}_2)\otimes t^{-1}e_1
    \end{equation}
    with $x_2'=x_2$. If $P$ denotes the matrix of $\vp$ with respect to the basis $\bar{n}_1,\bar{n}_2$, then $P$ is upper-triangular. Let $\alpha$ be the unit root of the polynomial $X^2-a_pX+p$, then $P$ is in fact of the form
    \[ P=\begin{pmatrix} \alpha & \star \\
         0 & uq
        \end{pmatrix}\]
    for some $u\in (\BB^+_{\QQ_p})^\times$ which is congruent to $\alpha^{-1}\mod \pi$.

    In \cite{leiloefflerzerbes10}, we have defined two pairs of Coleman maps (with respect to the chosen basis),  
    \begin{align*}
     \Col_i:\NN(V)^{\psi=1} & \rTo(\BB_{\Qp}^+)^{\psi=0}, \hspace{3ex}\text{and} \\
     \underline{\Col}_i:\NN(V)^{\psi=1} & \rTo\Lambda_{\Qp}(\Gamma)
    \end{align*}
    for $i=1,2$ with the following properties: for $x\in\NN(V)^{\psi=1}$, write $x=(x_1\bar{n}_1+x_2\bar{n}_2)\otimes \pi^{-1}e_1$ where $x_1,x_2\in\BB_{\Qp}^+$. Then
    \begin{align}\label{colemanmaps}
     (1-\vp)(x) & =\begin{pmatrix}\Col_1(x) & \Col_2(x) \end{pmatrix}M\begin{pmatrix}\bar{\nu}_1\\\bar{\nu}_2\end{pmatrix}\otimes t^{-1}e_1 \\
     & =\begin{pmatrix}\underline{\Col}_1(x) & \underline{\Col}_2(x) \end{pmatrix}\cdot[(1+\pi)M]\begin{pmatrix}\bar{\nu}_1\\\bar{\nu}_2\end{pmatrix}\otimes t^{-1}e_1
    \end{align}
    where  $M=\frac{t}{\pi q}P^T(M')^{-1}$.  
    
    \begin{definition}
     Let $H^1(\QQ_{p,n},T)^i$ be the image of $\ker(\underline{\Col}_i)\cap\NN(T)^{\psi=1}$ under the natural maps $\NN(T)^{\psi=1}\rightarrow H^1_{\Iw}(\Qp,T)\rightarrow H^1(\QQ_{p,n},T)$ and write $H^1_{f,i}(\QQ_{p,n},\Ep)$ for the exact annihilator of $H^1(\QQ_{p,n},T)^i$ under the Tate pairing.
    \end{definition}
    
    If $E$ is defined over $\QQ$, we can then define the signed Selmer groups $\Sel^i(E/\QQ(\mu_{p^\infty}))$ analogously to the construction when $E$ is supersingular at $p$.
    
    \begin{definition}
     Define
     \[
     \Sel^i(E/\QQ(\mu_{p^n}))=\ker\left(\Sel(E/\QQ(\mu_{p^n}))\rightarrow \frac{H^1(\Qp(\mu_{p^n}),\Ep)}{H^1_{f,i}(\Qp(\mu_{p^n}),\Ep)})\right)
     \]
     where $\Sel(E/\QQ(\mu_{p^n}))$ denotes the usual Selmer group and we define $\Sel^i(E/\QQ(\mu_{p^\infty}))$ to be the direct limit of $\Sel^i(E/\QQ(\mu_{p^n}))$.     
    \end{definition}
    
    We now show that on choosing an appropriate basis, we can describe $\ker(\underline{\Col}_2)$ in a manner similar to the good supersingular case (c.f. Remark~\ref{relationtoColeman}).

   \begin{lemma}\label{basis}
    We can choose $\bar{n}_2$ such that $u=\alpha^{-1}$.
   \end{lemma}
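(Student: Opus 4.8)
The plan is to use the small amount of freedom available in the choice of $\bar{n}_2$. Since $\bar{n}_1=\bar{\nu}_1$ generates the rank-one submodule $\NN(\hat{V}(-1))\subset\NN(V(-1))$ and is kept fixed, any admissible choice is of the form $\bar{n}_2'=\lambda\bar{n}_2+\mu\bar{n}_1$ with $\lambda\in(\BB^+_{\QQ_p})^\times$ and $\mu\in\BB^+_{\QQ_p}$; then $\bar{n}_1,\bar{n}_2'$ is again a basis of $\NN(V(-1))$, and since $\BB^+_{\QQ_p}\bar{n}_1$ is unchanged the matrix of $\vp$ remains upper-triangular. A short computation from $\vp(\bar{n}_1)=\alpha\bar{n}_1$ and $\vp(\bar{n}_2)=\star\,\bar{n}_1+uq\bar{n}_2$ shows that the bottom-right entry $uq$ of $P$ is replaced by $\frac{\vp(\lambda)}{\lambda}uq$ (the parameter $\mu$ alters only the entry $\star$), while in the change-of-basis relation \eqref{matrix} the matrix $M'$ is replaced by $M'\cdot\mathrm{diag}(1,\lambda^{-1})$. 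Hence it suffices to exhibit $\lambda\in(\BB^+_{\QQ_p})^\times$ with $\vp(\lambda)/\lambda=(\alpha u)^{-1}$, for then the new $\bar{n}_2$ satisfies $u=\alpha^{-1}$.

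To produce such a $\lambda$, note first that $\alpha$ is a root of $X^2-a_pX+p$ with $a_p\in\ZZ$ and $p\nmid a_p$, so $\alpha\in\ZZ_p^\times$; since $u\in(\BB^+_{\QQ_p})^\times$ with $u\equiv\alpha^{-1}\bmod\pi$, the unit $w:=(\alpha u)^{-1}$ lies in $1+\pi\ZZ_p[[\pi]]$. I would then solve $\vp(\lambda)=w\lambda$ with $\lambda=1+\sum_{k\geq1}c_k\pi^k$ by successive approximation in the $\pi$-adic filtration: using $\vp(\pi)=\pi q$ with $q=((1+\pi)^p-1)/\pi\equiv p\bmod\pi$, comparison of the coefficients of $\pi^n$ for $n\geq1$ yields a relation $(p^n-1)c_n=F_n$, where $F_n\in\ZZ_p$ is a fixed polynomial expression in $c_1,\dots,c_{n-1}$ and in the coefficients of $w$; since $p^n-1\in\ZZ_p^\times$ for every $n\geq1$, this determines $c_n\in\ZZ_p$ uniquely and inductively, so $\lambda\in1+\pi\ZZ_p[[\pi]]\subset(\BB^+_{\QQ_p})^\times$. (Conceptually this trivialises the étale rank-one $\vp$-module $(\BB^+_{\QQ_p},\vp=w\cdot)$, which is possible exactly because $w(0)=1$, i.e. because that $\vp$-module corresponds to the trivial crystalline character.)

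The crux of the argument is the second paragraph: the point is not whether $\lambda$ exists in $\Brig$, but whether it can be taken to be a genuine unit of the \emph{bounded} ring $\BB^+_{\QQ_p}$, and this works because $q\equiv p\bmod\pi$ controls how $\vp$ moves the $\pi$-adic filtration while the $p$-adic integrality of $(p^n-1)^{-1}$ for all $n\geq1$ keeps the $c_n$ integral. The hypothesis $u\equiv\alpha^{-1}\bmod\pi$ is precisely what makes the lowest-order comparison consistent; were $w(0)\neq1$ the normalisation would be obstructed, reflecting the fact that $(\BB^+_{\QQ_p},\vp=w\cdot)$ would then be a non-trivial unramified character. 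Finally, I would flag that after this change the bottom-right entry of $M'$ in \eqref{matrix} becomes $(t/\pi)\lambda^{-1}$ rather than $t/\pi$; since $\lambda^{-1}\in1+\pi\ZZ_p[[\pi]]$ this is immaterial for the description of $\ker(\underline{\Col}_2)$ that follows.
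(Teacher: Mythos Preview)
Your proof is correct and follows the same overall strategy as the paper: rescale $\bar{n}_2$ by a unit $\lambda\in(\BB^+_{\QQ_p})^\times$ so that the bottom-right entry of $P$ becomes $\alpha^{-1}q$, which amounts to solving $\vp(\lambda)/\lambda=(\alpha u)^{-1}$. The only genuine difference is how this $\vp$-equation is solved. The paper writes down the closed-form solution
\[
\lambda=\prod_{n\ge0}\vp^n(\alpha u),
\]
observing that $\alpha u\equiv1\bmod\pi$ forces $\vp^n(\alpha u)\to1$ so the product converges in $(\BB^+_{\QQ_p})^\times$, and then checks $\lambda\,\vp(\lambda)^{-1}=\alpha u$. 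You instead solve by $\pi$-adic successive approximation, using that the $\pi^n$-coefficient yields a recursion $(p^n-1)c_n=F_n(c_1,\dots,c_{n-1})$ with $p^n-1\in\ZZ_p^\times$. Both arguments exploit the same crucial hypothesis $\alpha u\equiv1\bmod\pi$; the paper's infinite product is shorter and more explicit, while your recursion gives a clearer conceptual explanation of why integrality is preserved and why the condition $w(0)=1$ is exactly what is needed. Your final remark about the diagonal of $M'$ becoming $(t/\pi)\lambda^{-1}$ is a detail the paper glosses over; you are right that it is harmless for what follows.
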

   \begin{proof}
    If we let $\bar{n}_1'=\bar{n}_1$ and $\bar{n}_2'=v\bar{n}_2$ where $v\in(\BB_{\Qp}^+)^\times$, then $\bar{n}_1',\bar{n}_2'$ is also a basis of $\NN(V(-1))$. The matrix of $\vp$ with respect to this basis is of the form
    \[ P'=\begin{pmatrix} \alpha & \star \\
          0 & v^{-1}\vp(v)uq
         \end{pmatrix}.\]
    Note that $\alpha u\equiv1\mod\pi$ implies that $\vp^n(\alpha u)\rightarrow 1$ as $n\rightarrow\infty$. In particular, the product $\prod_{n\ge0}\vp^n(\alpha u)$ converges to an element $u'\in(\BB_{\Qp}^+)^\times$. Since $u'\vp(u')^{-1}=\alpha u$, we deduce that $P'$ is of the required form if we take $v=u'$.
   \end{proof} 
   
  \begin{lemma}\label{firstform}
   With respect to the basis given by Lemma~\ref{basis},
   \[
   \ker(\underline{\Col}_2)=\left\{x\in\NN(V)^{\psi=1}|\vp(x_2)=\alpha x_2\text{ where }x=(x_1\bar{n}_1+x_2\bar{n}_2)\otimes \pi^{-1}e_1\text{ with } x_i\in\BB_{\Qp}^+\right\}.
   \]
  \end{lemma}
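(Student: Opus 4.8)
The plan is to compute the change-of-basis matrix $M$ explicitly for the basis supplied by Lemma~\ref{basis}, and then to read off the vanishing of $\underline{\Col}_2(x)$ from the coefficient of $\bar\nu_2$ in $(1-\vp)(x)$.

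First I would determine the shape of $M=\frac{t}{\pi q}P^T(M')^{-1}$. With the basis of Lemma~\ref{basis} we have $u=\alpha^{-1}$, so $P=\begin{pmatrix}\alpha&\star\\0&\alpha^{-1}q\end{pmatrix}$ is upper triangular; since $M'$ is lower triangular with diagonal $(1,t/\pi)$, the matrix $P^T(M')^{-1}$ is lower triangular with diagonal $(\alpha,\alpha^{-1}q\pi/t)$, and hence $M$ is lower triangular with $M_{11}=t\alpha/(\pi q)$ and, most importantly, $M_{22}=\alpha^{-1}$ and $M_{12}=0$. Writing $(1-\vp)(x)=(y_1\bar\nu_1+y_2\bar\nu_2)\otimes t^{-1}e_1$ and comparing the $\bar\nu_2$-components in the second displayed equality of~\eqref{colemanmaps}, the relation $M_{12}=0$ leaves only $y_2=\underline{\Col}_2(x)\cdot(1+\pi)\cdot\alpha^{-1}$. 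Since $\alpha\neq0$ and the Mellin transform $\Lambda_{\Qp}(\Gamma)\to(\BB_{\Qp}^+)^{\psi=0}$, $\lambda\mapsto\lambda\cdot(1+\pi)$, is injective, this shows that $\underline{\Col}_2(x)=0$ if and only if $y_2=0$.

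It then remains to identify $y_2$ with a nonzero multiple of $\vp(x_2)-\alpha x_2$. By~\eqref{secondform} the coefficient of $\bar\nu_2$ in $x$ is $x_2'=x_2$, so $y_2=x_2-c_2$ where $c_2$ denotes the $\bar\nu_2$-coefficient of $\vp(x)$. To compute $c_2$ I would use the $\vp$-equivariance of $\iota$ together with~\eqref{matrix}: since $\bar\nu_1=\bar n_1$ is a $\vp$-eigenvector of eigenvalue $\alpha$ (the $(1,1)$-entry of $P$) and $\det(\vp\mid\DD_{\cris}(V(-1)))=p$ (because $\det V(-1)=\Qp(-1)$), the matrix of $\vp$ on $\DD_{\cris}(V(-1))$ in the basis $\bar\nu_1,\bar\nu_2$ is lower triangular with diagonal $(\alpha,p/\alpha)$. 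Combining this with $\vp(t^{-1}e_1)=p^{-1}(t^{-1}e_1)$ (the $\vp$-action on $\DD_{\cris}(\Qp(1))$) and the $\Brig$-semilinearity of $\vp$, one gets $c_2=(p/\alpha)\cdot p^{-1}\cdot\vp(x_2)=\alpha^{-1}\vp(x_2)$, whence $y_2=x_2-\alpha^{-1}\vp(x_2)=-\alpha^{-1}(\vp(x_2)-\alpha x_2)$. Therefore $y_2=0$ exactly when $\vp(x_2)=\alpha x_2$, which together with the previous paragraph proves the lemma.

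The step requiring the most care is the computation of $c_2$: one must faithfully track the factor $t/\pi$ hidden in $M'$, the identities $\vp(\pi)=q\pi$ and $\vp(t)=pt$, and the normalisation of the twisting element $e_1$ (all as fixed in~\cite[\S 3.2]{leiloefflerzerbes10}), so as to be certain that the diagonal entry of $\vp$ producing $y_2$ is precisely $\alpha$, and not $p\alpha$ or $\alpha/p$. Once the triangular shapes recorded above are in place, the remainder is routine linear algebra with triangular matrices over $\Brig$.
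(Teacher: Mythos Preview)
Your proof is correct and follows essentially the same strategy as the paper's: both compute that $M$ is lower triangular with diagonal $(\alpha t/(\pi q),\alpha^{-1})$, and both identify the $\bar\nu_2$-coefficient of $(1-\vp)(x)$ with a unit multiple of $\alpha x_2-\vp(x_2)$. The only cosmetic difference is that the paper routes the argument through $\Col_2$ (reading off $\Col_2(x)=\alpha x_2-\vp(x_2)$ from the first line of~\eqref{colemanmaps} and then invoking $\ker(\underline{\Col}_2)=\ker(\Col_2)$ via the triangularity of $M$), whereas you work directly with the second line of~\eqref{colemanmaps} and the injectivity of the Mellin transform; the underlying computation is the same.
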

  \begin{proof}
   By \eqref{colemanmaps}, we have $\Col_2(x)=\alpha x_2-\vp(x_2)$. Moreover, $M$ is lower triangular with $\alpha\frac{t}{\pi q}$ and $\alpha^{-1}$ on the diagonal. Therefore, $\ker(\underline{\Col}_2)=\ker(\Col_2)$ and we are done.
  \end{proof}
   
  \begin{corollary}\label{description} 
   With respect to the basis given by Lemma~\ref{basis},
   \[
   \ker(\underline{\Col}_2)=\left\{x\in\NN(V)^{\psi=1}|\vp(x_2)=\alpha x_2\text{ where }x=(x_1\bar{\nu}_1+x_2\bar{\nu}_2)\otimes t^{-1}e_1 \text{ with } x_i\in\Brig\right\}.
   \]
  \end{corollary}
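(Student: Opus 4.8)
The plan is to read the statement off directly from Lemma~\ref{firstform} together with the identity $x_2'=x_2$ recorded after \eqref{secondform}. Recall that Lemma~\ref{firstform} describes $\ker(\underline{\Col}_2)$ through the coefficient of $x\in\NN(V)^{\psi=1}$ in the expansion along the Wach-module basis $\bar{n}_1,\bar{n}_2$, whereas Corollary~\ref{description} asks for the same description through the coefficient in the expansion along the Dieudonn\'e basis $\bar{\nu}_1,\bar{\nu}_2$ (this is the element called $x_2'$ in \eqref{secondform}, written $x_2$ in the statement of the corollary).

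First I would record that both sets in question sit inside $\NN(V)^{\psi=1}$, and that for $x\in\NN(V)$ the two expansions
\[
x=(x_1\bar{n}_1+x_2\bar{n}_2)\otimes\pi^{-1}e_1,\qquad x=(x_1'\bar{\nu}_1+x_2'\bar{\nu}_2)\otimes t^{-1}e_1,
\]
with $x_i\in\BB^+_{\Qp}$ and $x_i'\in\Brig$, both exist and are unique — this is exactly the content of the discussion preceding and following \eqref{secondform}. Comparing the two expansions and using that $M'$ is lower triangular with diagonal entries $1$ and $t/\pi$, together with the factor relating $\pi^{-1}e_1$ to $t^{-1}e_1$, one obtains $x_2'=x_2$.

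It then follows that, for $x\in\NN(V)^{\psi=1}$, the conditions $\vp(x_2)=\alpha x_2$ and $\vp(x_2')=\alpha x_2'$ are literally one and the same condition on $x$. Since by Lemma~\ref{firstform} the former cuts out $\ker(\underline{\Col}_2)$ inside $\NN(V)^{\psi=1}$, so does the latter, and this is precisely the asserted equality.

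There is essentially no obstacle: the corollary is a purely formal rephrasing of Lemma~\ref{firstform} via the coordinate identity $x_2'=x_2$. The only point I would take a moment to verify is that the coefficients $x_i'$ genuinely lie in $\Brig$, and not merely in $\Brig[t^{-1}]$, so that the set appearing in the corollary is the intended one; this is guaranteed by the inclusion $\NN(V)\subset\DD_{\cris}(V)\otimes_{\Qp}\Brig$, which holds because $V(-1)$ has non-negative Hodge--Tate weights.
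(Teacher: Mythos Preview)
Your argument is correct and follows exactly the paper's route: the paper's proof simply cites Lemma~\ref{firstform} together with \eqref{secondform}, i.e.\ the identity $x_2'=x_2$, which is precisely what you spell out. Your extra care in checking that the coefficients $x_i'$ lie in $\Brig$ is harmless, though note that in the paper this is obtained directly from the shape of $M'$ and the factor $t/\pi$ rather than from a Hodge--Tate weight argument.
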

  \begin{proof}
   This follows from Lemma~\ref{firstform} and \eqref{secondform}.
  \end{proof}

  Let $L$ be a finite extension of $\QQ$, and let $L_\infty=L(\mu_{p^\infty})$. Using Corollary~\ref{description}, we define $\Sel^2(E\slash L_\infty)$ as follows.  Let $\nu$ be a prime of $L$ above $p$. If $x\in \DD^\dagger_{L_\nu}(T)^{\psi=1}$ then we can use Berger's comparison isomorphism~\eqref{comparison} we can write $x=(x_1 \bar{\nu}_1+ x_2 \bar{\nu}_2)\otimes t^{-1}e_1$ with $x_i\in \BB^\dagger_{\rig,L_\nu}$ as in the supersingular case. Define
  \[
  H^1_{\Iw}(L_\nu,T)^2=\left\{x\in\DD_{L_{\nu}}(T)^{\psi=1}|\vp(x_2)=\alpha x_2\right\}
  \]
  and $H^1(L_{\nu,n},T)^2$ is defined to be the projection of $H^1_{\Iw}(L_\nu,T)^2$ in $H^1(L_{\nu,n},T)$. Let $H^1_{f,2}(L_{\nu,n},\Ep)$ be the exact annihilator of $H^1_{\Iw}(L_\nu,T)^2$.
  
  \begin{definition}
   For all $v\in S$, define 
   \[ J^2_v(L_n)= \bigoplus_{w_n\mid v}  \frac{H^1(L_{w_n,n},\Ep)}{H^1_{f,2}(L_{w_n,n},\Ep)},\] 
   where the direct sum is taken over all primes $w_n$ of $L_n$ above $v$. Here, $H^1_{f,2}(L_{w_n,n},\Ep)=H^1_f(L_{w_n,n},\Ep)$ whenever $v\nmid p$. Define $J^{2}_v(L_\infty)=\varinjlim J^{(2)}_v(L_n)$. Define
   \[ \Sel^2(E\slash L_\infty)=\ker\left( H^1(G_S(L_\infty),\Ep)\rightarrow \bigoplus_{v\in S}J^2_v(L_\infty)\right).\] 
  \end{definition}

%+++++++++++++++++++++++++++++++++++++++++++++++++++++++++++++++   

  \subsubsection{Properties of $\Sel^2(E\slash L_\infty)$}

   Let us now study the group $H^1_{\Iw}(L_\nu,T)^2$ a bit further. To simplify notation, let $K=L_\nu$.

   \begin{lemma}\label{onlytrivialsolution}
    Let $a\in \ZZ_p^\times$, and assume that $a$ is not a root of unity. If $x\in\BB^\dagger_{\rig,K}$ satisfies 
    \begin{equation}\label{nosolution}
     a x-\vp(x)=0
    \end{equation}
    then $x=0$.
   \end{lemma}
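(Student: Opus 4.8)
The plan is to analyze the equation $ax - \varphi(x) = 0$ on the Robba ring $\BB^\dagger_{\rig,K}$ by comparing Newton polygons, or equivalently by tracking the valuations of the Laurent coefficients of $x$. Recall that for $n \geq N_K$ every element of $\BB^{\dagger,n}_{\rig,K}$ is represented by a series $\sum_{k\in\ZZ} a_k \pi_K^k$ (with coefficients in the maximal unramified subextension) that is holomorphic and bounded on an annulus $\rho \leq |X| < 1$; the Frobenius $\varphi$ acts by shrinking this annulus and, on the "coordinate", roughly raises $\pi_K$ to a higher power (it sends $\pi$ to $(1+\pi)^p - 1$, which has leading term $\pi^p$). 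So $\varphi(x)$ has a qualitatively different radial behaviour from $x$ unless $x$ is essentially constant, and the unit $a$ cannot fix this up.

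First I would fix an $n$ large enough that $x \in \BB^{\dagger,n}_{\rig,K}$, so that $x$ has a convergent Laurent expansion on a fixed annulus, and also large enough that $\varphi(x) \in \BB^{\dagger,n}_{\rig,K}$ (possible since $\varphi$ maps $\BB^{\dagger,n}_{\rig,K} \to \BB^{\dagger,n+1}_{\rig,K} \subseteq \BB^{\dagger,n}_{\rig,K}$ after enlarging). Then the equation $\varphi(x) = a x$ forces $x$ to extend to the larger annulus on which $\varphi(x)$ is defined, and iterating, $x$ extends to the whole open unit disc punctured at $0$ — that is, $x \in \BB^{\dagger}_{\rig,K}$ actually lies in the ring of functions holomorphic on $0 < |X| < 1$. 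Next I would argue that $x$ has no pole at the origin: examining the lowest-degree term, if $x = c\,\pi_K^{-m} + (\text{higher})$ with $m > 0$ and $c \neq 0$, then $\varphi(x)$ has lowest-degree term of the form $c' \pi_K^{-mp} + \cdots$ (up to unramified twist), forcing $-mp = -m$ by comparing with $ax$, which is impossible for $m>0$; so $m \le 0$ and similarly one rules out nonzero negative-index terms, leaving $x \in \BB^+_{\rig,K}$, a power series. Then the same leading-term comparison at the bottom (now the constant term and above) shows $x$ must be a constant: if the lowest nonzero term is $c\pi_K^\ell$ with $\ell \ge 1$, comparing degrees $\ell$ on both sides of $\varphi(x) = ax$ gives a contradiction since $\varphi$ raises the degree. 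Hence $x \in F'$ (the unramified coefficient field), a constant, and then $\varphi(x) = ax$ with $\varphi$ acting $F'$-semilinearly reduces to a statement about the unramified part; comparing with $a \in \ZZ_p^\times$ and using that $\varphi$ on $F'$ is a power of the absolute Frobenius, one finds $x$ (if nonzero) would force $a$ to be a root of unity via $\varphi^d(x) = x$ for $d = [F':\Qp]$, giving $a^d x = x$, so $a^d = 1$, contradicting the hypothesis.

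The main obstacle I expect is making the "leading-term" analysis rigorous in the presence of the coefficient field $F'$ and the ramified uniformizer $\pi_K$: $\varphi$ does not send $\pi_K$ exactly to $\pi_K^p$, only up to lower-order and unit corrections, and it also acts nontrivially on $F'$. One has to set up a valuation (the Gauss norm / valuation on each circle $|X| = r$, or better the valuation $v_{\tilde\EE}$ underlying the definition of $\BB^{\dagger,n}$) for which $\varphi$ has a controlled, strictly expanding effect, and check that the unit $a$ does not interfere — it has valuation $0$ in every relevant sense. An alternative, possibly cleaner route is to use the slope decomposition of $\varphi$-modules over the Robba ring (Kedlaya): the equation says $x$ spans a $\varphi$-stable submodule on which $\varphi$ acts by the scalar $a$, a rank-one $\varphi$-module of slope $v_p(a) = 0$; étale rank-one $\varphi$-modules over $\BB^\dagger_{\rig,K}$ of slope $0$ with a nonzero global section would be trivial, and triviality plus the scalar being $a$ forces $a$ to be a root of unity unless the section is $0$. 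I would present the elementary Newton-polygon argument as the main proof and mention the slope-theoretic viewpoint only as a remark, since the elementary version keeps the paper self-contained given what has been set up.
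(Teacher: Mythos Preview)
Your overall strategy---shrink to the bounded/holomorphic part, kill the principal part, reduce to constants, then use the finiteness of the unramified residue field---is sound in outline, and the final step ($a^d=1$ from $\varphi^d=\mathrm{id}$ on $F'$) is correct. But there is a concrete error and an unfilled gap.

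The concrete error is the claimed leading-term behaviour of $\varphi$. You assert that $\varphi(\pi_K^{-m})$ has lowest-degree term $\pi_K^{-mp}$. This is what happens modulo $p$, but in characteristic zero it is false: already for $K=\QQ_p$ one has $\varphi(\pi)=(1+\pi)^p-1=p\pi+\binom{p}{2}\pi^2+\cdots+\pi^p$, so near $\pi=0$ the lowest-degree term of $\varphi(\pi)$ is $p\pi$, not $\pi^p$, and hence $\varphi(\pi^{-m})=p^{-m}\pi^{-m}(1+O(\pi))$. Comparing degrees therefore gives $-m=-m$, not $-mp=-m$. The argument can be repaired (compare the leading \emph{coefficients} instead, getting $a=p^{\pm m}$, which contradicts $a\in\ZZ_p^\times$ unless $m=0$; or better, observe that $\varphi(x)$ acquires new poles at the zeros $\zeta_p^i-1$ of $\varphi(\pi)$ inside the disc while $ax$ does not), but as written the step is wrong.

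The unfilled gap is the one you flag yourself: carrying the leading-term analysis over $\pi_K$ for general $K$. The paper avoids this entirely by a Galois-descent trick that you should consider adopting. Namely, first prove the lemma for $K=\QQ_p$ (where one can pass to the coordinate $t=\log(1+\pi)$, on which $\varphi$ acts by $t\mapsto pt$, making the $\Brig$ case a one-line coefficient comparison; and then use the annulus-level jump $\varphi(\BB^{\dagger,n}_{\rig,\QQ_p})\subset\BB^{\dagger,n+1}_{\rig,\QQ_p}\smallsetminus\BB^{\dagger,n}_{\rig,\QQ_p}$ together with a pole-location argument for the remaining cases). For general $K$, replace $K$ by its Galois closure, set $H=\Gal(K_\infty/\QQ_{p,\infty})$, and note that each $\sigma(x)$ also satisfies $\varphi(\sigma(x))=a\,\sigma(x)$. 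The elementary symmetric functions of $\{\sigma(x)\}_{\sigma\in H}$ then lie in $\BB^\dagger_{\rig,\QQ_p}$ and satisfy $\varphi(y)=a^i y$ for various $i$, so by the $\QQ_p$ case they all vanish, forcing $x=0$. This bypasses any direct control of $\varphi$ on $\pi_K$ and makes the slope-theory remark unnecessary.
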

   \begin{proof}
    If $x\in \Brig$, then we can substitute $\pi=e^t-1$ to write $x$ of the form $\sum_{n\geq 0}c_nt^n$ with $c_n\in\QQ_p$. Since $\vp(t)=pt$, it is clear from this description that for any $a\neq 1$ and $x\ne0$, we have $\vp(x)\neq ax$. 
    
    Let $x\in\BB^\dagger_{\rig,\Qp}-\Brig$. Assume that there exists $n\ge 0$ be such that $x\in \BB^{\dagger,n}_{\rig,\QQ_p}$ and $x\not\in \BB^{\dagger,n-1}_{\rig,\QQ_p}$. Then $\vp(x)\in \BB^{\dagger,n+1}_{\rig,\QQ_p}-\BB^{\dagger,n}_{\rig,\QQ_p}$, so $\vp(x)\neq a x$.
    
    If $x\in\BB^{\dagger,n}_{\rig,\QQ_p}$ for all $n\ge0$ and $x\not\in\Brig$, then $\vp^{-1}(x)$ converges in $\BB^+_{\mathrm{dR}}$, so if we write $x=f(\pi)$, then $f(T)$ does not have a pole at $\varepsilon^{(1)}-1$. But $f(T)$ has a pole at $T=0$ as $x\not\in\Brig$, so $f\big( (T+1)^p-1\big)$ has poles at the $\{(\varepsilon^{(1)})^i-1:0\leq i<p\}$. Hence $\vp(x)\neq ax$.
    
    Assume now that $x\in \BB^\dagger_{\rig,K}$ satisfies $\vp(x)=ax$, and that $x\not\in \BB^{\dagger}_{\rig,\QQ_p}$. On replacing $K$ by its Galois closure, if necessary, we may assume that $K\slash\QQ_p$, and hence $K_{\infty}\slash\QQ_{p,\infty}$, are Galois. Let $H=\Gal(K_{\infty}\slash \QQ_{p,\infty})$. Since $\vp$ is $H$-equivariant, $\sigma(x)$ also satisfies~\eqref{nosolution} for all $\sigma\in H$. More generally, if $\sigma_1,\dots,\sigma_i\in H$, then $y=\sigma_1(x_2)\dots\sigma_i(x_2)$ satisfies $a^iy=\vp(y)$. The coefficients of the polynomial
    \[ f(Y)=\prod_{\sigma\in H} (Y-\sigma(x))\]
    are elements in $\BB^{\dagger}_{\rig,\QQ_p}$ which satisfy an equation of the form~\eqref{nosolution}, so they must all be zero by the above argument. But the minimal polynomial of $x$ over $\BB^\dagger_{\rig,\QQ_p}$ divides $f(Y)$, which gives a contradiction. 
   \end{proof}
   
   \begin{remark}\label{Weilnumber}
    The unit root $\alpha$ of the polynomial $X^2-a_pX+p$ is a Weil number of complex absolute value $\sqrt{p}$, so it cannot be a root of unity.
   \end{remark}
   \begin{proposition}
    Let $x\in \DD^\dagger_{K_\nu}(V)^{\psi=1}$, and write $x=(x_1 \bar{\nu}_1+ x_2 \bar{\nu}_2)\otimes t^{-1}e_1$ with $x_i\in \BB^\dagger_{\rig,\QQ_p}$. Then $x\in H^1_{\Iw}(K_\nu,T)^2$ if and only if $x_2=0$.
   \end{proposition}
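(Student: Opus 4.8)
The plan is to recognise that this proposition is essentially a direct corollary of Lemma~\ref{onlytrivialsolution}. Unwinding the definition of $H^1_{\Iw}(K,T)^2$ (with $K=L_\nu$ as above), an element $x=(x_1\bar\nu_1+x_2\bar\nu_2)\otimes t^{-1}e_1$ of $\DD^\dagger_K(V)^{\psi=1}$ lies in $H^1_{\Iw}(K,T)^2$ precisely when its second coordinate satisfies $\vp(x_2)=\alpha x_2$, that is, when $\alpha x_2-\vp(x_2)=0$; so the whole question reduces to deciding for which $x_2\in\BB^\dagger_{\rig,K}$ this equation holds. The ``if'' direction is then immediate: if $x_2=0$ the equation is trivially satisfied, so $x\in H^1_{\Iw}(K,T)^2$.

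For the ``only if'' direction I would apply Lemma~\ref{onlytrivialsolution} with $a=\alpha$, which requires checking its two hypotheses. First, $\alpha\in\Zp^\times$: this holds because $\alpha$ was chosen to be the \emph{unit} root of $X^2-a_pX+p$. Second, $\alpha$ is not a root of unity: this is exactly Remark~\ref{Weilnumber}, as $\alpha$ is a Weil number of complex absolute value $\sqrt{p}$. Granting these, the lemma applied to $x_2\in\BB^\dagger_{\rig,K}$ forces $x_2=0$, completing the argument.

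I do not expect a substantial obstacle; the one point worth spelling out is bookkeeping. The equation $\alpha x_2-\vp(x_2)=0$ lives over the Robba ring $\BB^\dagger_{\rig,K}$ of the (possibly ramified, possibly larger) field $K$, not over $\BB^\dagger_{\rig,\Qp}$, but this is precisely the generality in which Lemma~\ref{onlytrivialsolution} is stated, the descent to $\BB^\dagger_{\rig,\Qp}$ being handled there by taking norms over $\Gal(K_\infty/\QQ_{p,\infty})$. One should also confirm that the coordinate $x_2$ produced by Berger's comparison isomorphism~\eqref{comparison} for $x\in\DD^\dagger_K(V)^{\psi=1}$ is the same $x_2$ appearing in the definition of $H^1_{\Iw}(K,T)^2$; this is immediate from the $\vp$- and $\calG_K$-equivariance of $\iota$, and note that the argument never involves the coordinate $x_1$.
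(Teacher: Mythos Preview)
Your proposal is correct and follows exactly the paper's approach: the paper's proof reads ``Immediate from Lemma~\ref{onlytrivialsolution} and Remark~\ref{Weilnumber}'', which is precisely what you unpack. Your observation that the coordinate $x_2$ lies in $\BB^\dagger_{\rig,K}$ rather than $\BB^\dagger_{\rig,\QQ_p}$ (despite the statement's apparent typo) and that Lemma~\ref{onlytrivialsolution} is stated in this generality is well taken.
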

   \begin{proof}
    Immediate from Lemma~\ref{onlytrivialsolution} and Remark~\ref{Weilnumber}.
   \end{proof}
   
   \begin{corollary}
    $x\in H^1_{\Iw}(K,T)^2$ if and only if $x\in \DD_{K}(\hat{T})^{\psi=1}$.
   \end{corollary}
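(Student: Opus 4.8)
The plan is to obtain the Corollary directly from the preceding Proposition, once the condition ``$x_2=0$'' has been identified with membership in $\DD_K(\hat T)^{\psi=1}$. First I would set up the relevant short exact sequence. Write $\hat T=T_p\hat E\subset T=T_pE$ for the kernel of reduction and put $\tilde T=T/\hat T$, which is an unramified $\calG_{\QQ_p}$-representation. Since $\AA$ is $\ZZ_p$-flat and $\tilde T$ is $\ZZ_p$-free, applying $(\AA\otimes_{\ZZ_p}-)^{H_K}$ to $0\rightarrow\hat T\rightarrow T\rightarrow\tilde T\rightarrow 0$ produces an exact sequence of $(\vp,\Gamma_K)$-modules
\[ 0\rightarrow\DD_K(\hat T)\rightarrow\DD_K(T)\rightarrow\DD_K(\tilde T)\rightarrow 0. \]
As $\psi$ is functorial, this gives $\DD_K(\hat T)^{\psi=1}=\ker\big(\DD_K(T)^{\psi=1}\rightarrow\DD_K(\tilde T)\big)$; in particular $\DD_K(\hat T)^{\psi=1}\subseteq\DD_K(T)^{\psi=1}=H^1_{\Iw}(K,T)$, so the claimed equivalence is meaningful, and it reduces to showing that $x\in H^1_{\Iw}(K,T)^2$ if and only if the image of $x$ in $\DD_K(\tilde T)$ vanishes.

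By the Proposition, $x\in H^1_{\Iw}(K,T)^2$ if and only if $x_2=0$, where $x=(x_1\bar{\nu}_1+x_2\bar{\nu}_2)\otimes t^{-1}e_1$ with $x_i\in\BB^\dagger_{\rig,K}$ (here, as in the definition of $H^1_{\Iw}(K,T)^2$, one uses $\DD_K(T)^{\psi=1}=\DD^\dagger_K(T)^{\psi=1}$ together with the inclusion $\NN(T)\subset\DD_{\cris}(V)\otimes\Brig$, base-changed to $K$, to see that no negative power of $t$ is needed). I would then invoke the functoriality of Berger's comparison isomorphism~\eqref{comparison}: the projection $V\twoheadrightarrow\tilde V:=\tilde T\otimes_{\ZZ_p}\QQ_p$ induces a commutative square identifying the map $\DD^\dagger_K(V)\otimes_{\BB^\dagger_K}\BB^\dagger_{\rig,K}[t^{-1}]\rightarrow\DD^\dagger_K(\tilde V)\otimes_{\BB^\dagger_K}\BB^\dagger_{\rig,K}[t^{-1}]$ with $\DD_{\cris}(V)\otimes\BB^\dagger_{\rig,K}[t^{-1}]\rightarrow\DD_{\cris}(\tilde V)\otimes\BB^\dagger_{\rig,K}[t^{-1}]$. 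Since $\bar{\nu}_1$ spans the sub-object $\DD_{\cris}(\hat V(-1))\subset\DD_{\cris}(V(-1))$, it maps to $0$ in the one-dimensional space $\DD_{\cris}(\tilde V(-1))$, while $\bar{\nu}_2$ must then map to a basis vector of that line; hence the image of $x$ in $\DD_{\cris}(\tilde V)\otimes\BB^\dagger_{\rig,K}[t^{-1}]$ equals $x_2$ times a nonzero vector, so it vanishes exactly when $x_2=0$. On the other hand, via the embedding $\DD^\dagger_K(\tilde V)\hookrightarrow\DD^\dagger_K(\tilde V)\otimes_{\BB^\dagger_K}\BB^\dagger_{\rig,K}[t^{-1}]$ this image is precisely the image of $x$ in $\DD^\dagger_K(\tilde V)$, which vanishes iff the image of $x$ in $\DD_K(\tilde V)=\DD_K(\tilde T)[p^{-1}]$ vanishes, iff the image of $x$ in $\DD_K(\tilde T)$ vanishes (recall $x\in\DD_K(T)$). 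Chaining these equivalences with the first paragraph gives $H^1_{\Iw}(K,T)^2=\DD_K(\hat T)^{\psi=1}$, as desired.

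I expect the only genuine input to be the functoriality of Berger's comparison isomorphism, together with the property --- built into the construction of the basis $\bar{\nu}_1,\bar{\nu}_2$ --- that $\bar{\nu}_1$ generates the Dieudonn\'e module of the formal-group part; the remaining ingredients are the exactness of $\DD_K(-)$, the left-exactness of $(-)^{\psi=1}$, and the routine passage between $T$ and $V$ and between $\DD_K(-)$ and $\DD^\dagger_K(-)$. The one place that calls for a little care is the claim that the $\bar{\nu}$-coordinates of an arbitrary element of $\DD_K(T)^{\psi=1}$ a priori lie in $\BB^\dagger_{\rig,K}[t^{-1}]$ rather than in $\BB^\dagger_{\rig,K}$; but this is handled exactly as in the definition of $H^1_{\Iw}(K,T)^2$, so it should not be a real obstacle.
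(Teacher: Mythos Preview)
Your argument is correct and rests on the same two ingredients as the paper's: the preceding Proposition (reducing to $x_2=0$) together with the comparison isomorphism and the fact that $\bar{\nu}_1$ generates the formal-group piece of $\DD_{\cris}(V(-1))$. The execution differs only in orientation. The paper works directly with the sub-object: it invokes the specific identity $\bar{\nu}_1=\bar{n}_1$, so that once $x_2=0$ one has $\iota(x)=x_1\bar{\nu}_1\otimes t^{-1}e_1$ with $\bar{\nu}_1$ already sitting inside $\NN(\hat V(-1))$, and reads off $x\in\DD_K(\hat T)$ in a single line. You instead pass to the quotient $\tilde T=T/\hat T$, set up the exact sequence $0\to\DD_K(\hat T)\to\DD_K(T)\to\DD_K(\tilde T)\to 0$, and use functoriality of Berger's isomorphism to see that the image of $x$ in the $\tilde V$-side is controlled by $x_2$. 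Your route is slightly longer but does not depend on the convenient coincidence $\bar{\nu}_1=\bar{n}_1$ peculiar to this basis; the paper's route is quicker precisely because it exploits that coincidence. The content is the same.
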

   \begin{proof}
    It follows immediately from the comparison isomorphism and the fact that $\bar{\nu}_1=\bar{n}_1$ that any $x\in\DD_{K}(T)^{\psi=1}$ which satisfies $\iota(x)=x_1\bar{\nu}_1\otimes t^{-1}e_1$ must indeed lie in $\DD_{K}(\hat{T})$.
   \end{proof}
         
   We can now conclude this section with the following theorem.
   
   \begin{theorem}\label{sameSelmergroup}
    We have $\Sel(E\slash L_\infty)=\Sel^2(E\slash L_\infty)$. 
   \end{theorem}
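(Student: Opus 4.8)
The plan is to reduce the equality of global Selmer groups to an equality of local conditions at the primes above $p$, and then invoke the corollary just established. Since $\Sel(E/L_\infty)$ and $\Sel^2(E/L_\infty)$ are both defined as kernels of restriction maps from $H^1(G_S(L_\infty),\Ep)$ into the same direct sum $\bigoplus_{v\in S}$, and since at every $v\nmid p$ the local condition $H^1_{f,2}(L_{w_n,n},\Ep)$ is by definition $H^1_f(L_{w_n,n},\Ep)$, it suffices to show that for each prime $\nu$ of $L$ above $p$ and each $n$, the local condition $H^1_{f,2}(L_{\nu,n},\Ep)$ used in $J^2_\nu$ agrees with the ordinary local condition $H^1_f(L_{\nu,n},\Ep)$ defining the usual Selmer group. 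Equivalently, by the definition of $H^1_{f,2}(L_{\nu,n},\Ep)$ as the exact annihilator of $H^1(L_{\nu,n},T)^2$ under the Tate pairing, I want $H^1(L_{\nu,n},T)^2$ to be the exact annihilator of $H^1_f(L_{\nu,n},\Ep)$, i.e.\ to coincide with the ``finite part'' $H^1_f(L_{\nu,n},T)$ on the Tate-module side.

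First I would record that $H^1_f(L_{\nu,n},T)$ for the ordinary representation is the image of $H^1(L_{\nu,n},\hat T)\to H^1(L_{\nu,n},T)$ coming from the formal-group subrepresentation $\hat T = T_p\hat E$, which sits in the exact sequence $0\to\hat T\to T\to \tilde T\to 0$ with $\tilde T$ the unramified quotient; this is the standard description of the ordinary (Bloch--Kato / Greenberg) local condition at $p$. Next, Theorem~\ref{comp} identifies $H^1_{\Iw}(L_\nu,T)$ with $\DD_{L_\nu}(T)^{\psi=1}$, and compatibly identifies the Iwasawa cohomology of $\hat T$ with $\DD_{L_\nu}(\hat T)^{\psi=1}$; passing to the quotient $H^1(L_{\nu,n},-)$ via the projection maps, the submodule of $H^1_{\Iw}(L_\nu,T)$ coming from $H^1_{\Iw}(L_\nu,\hat T)$ maps onto the local condition at level $n$. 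The corollary immediately preceding the theorem states precisely that $H^1_{\Iw}(K,T)^2 = \DD_K(\hat T)^{\psi=1}$ inside $\DD_K(T)^{\psi=1}$ (with $K=L_\nu$), so $H^1(L_{\nu,n},T)^2$ is exactly the image of $H^1_{\Iw}(L_\nu,\hat T)$ in $H^1(L_{\nu,n},T)$.

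It then remains to check that this image is the full ordinary local condition $H^1_f(L_{\nu,n},T)$, and dually that its annihilator is $H^1_f(L_{\nu,n},\Ep)$. For this I would use that $\hat T$ has Hodge--Tate weight $1$ so its cohomology has no higher-rank contribution at finite level beyond the expected one, together with the facts that $H^0(L_{\nu,n},\tilde V)=0$ and $H^0$ of the dual are controlled (so the map $H^1(L_{\nu,n},\hat T)\to H^1(L_{\nu,n},T)$ is injective up to torsion and its image has the right dimension), and that the Tate pairing is compatible with the decomposition coming from $0\to\hat T\to T\to\tilde T\to 0$: the exact annihilator of the formal-group part of $H^1(L_{\nu,n},T)$ is the formal-group part of $H^1(L_{\nu,n},\Ep)$, which is exactly $H^1_f(L_{\nu,n},\Ep)$ in the ordinary case. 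Combining, $H^1_{f,2}(L_{\nu,n},\Ep)=H^1_f(L_{\nu,n},\Ep)$ for all $n$ and all $\nu\mid p$, hence $J^2_v(L_\infty)=J^{\mathrm{ord}}_v(L_\infty)$ for all $v\in S$, and taking the kernel over $H^1(G_S(L_\infty),\Ep)$ yields $\Sel^2(E/L_\infty)=\Sel(E/L_\infty)$; passing to the direct limit over $n$ is harmless since everything was compatible with the transition maps.

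The main obstacle I anticipate is the last step: verifying cleanly that the submodule of $H^1(L_{\nu,n},T)$ cut out by $\DD_{L_\nu}(\hat T)^{\psi=1}$ really is the ordinary local condition $H^1_f$, rather than something merely contained in or containing it. This requires a dimension count (or an exact-sequence chase through $0\to\hat T\to T\to\tilde T\to 0$ and its Iwasawa-cohomology long exact sequence, using that $\tilde T$ is unramified so $H^1_{\Iw}(L_\nu,\tilde T)$ and $H^2$ are controlled by Euler characteristic formulae) and a careful check that the projection from Iwasawa cohomology to level-$n$ cohomology is surjective onto the local condition, which in turn rests on the vanishing of the relevant $H^2_{\Iw}$ or a Nakayama-type argument. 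Once the dimensions match and the containment is established in one direction, equality follows, and the self-duality of $H^1_f$ under the Tate pairing closes the argument on the $\Ep$ side.
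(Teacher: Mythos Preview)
Your reduction to the local conditions at primes above $p$ and your use of the preceding corollary identifying $H^1_{\Iw}(K,T)^2$ with $\DD_K(\hat T)^{\psi=1}=H^1_{\Iw}(L_\nu,\hat T)$ are exactly what the paper does. You have also correctly isolated the crux of the argument: one must explain why the submodule cut out by $H^1_{\Iw}(L_\nu,\hat T)$ agrees with the Bloch--Kato condition $H^1_f$.

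Where your proposal diverges from the paper is in \emph{how} this last step is handled. You propose to work level by level, establishing $H^1(L_{\nu,n},T)^2=H^1_f(L_{\nu,n},T)$ for each $n$ via a dimension count and an exact-sequence chase through $0\to\hat T\to T\to\tilde T\to 0$. This is the harder route, and it is not clear it works as stated: the projection $H^1_{\Iw}(L_\nu,\hat T)\to H^1(L_{\nu,n},T)$ need not surject onto $H^1_f(L_{\nu,n},T)$ for a given $n$, so the finite-level equality you are aiming for may simply fail. The paper sidesteps this by working only with the inverse limit. It first observes $V^{H_{L_\nu}}=0$ (good ordinary reduction over a finite extension), and then invokes a theorem of Perrin-Riou (or equivalently Berger) to obtain
\[
H^1_{\Iw}(L_\nu,\hat T)\otimes_{\Zp}\Qp=\varprojlim_n H^1_g(L_{\nu,n},T)\otimes_{\Zp}\Qp.
\]
Combining this with Bloch--Kato's identity $H^1_f=H^1_g$ and the torsion-freeness of the relevant quotients yields the integral statement $H^1_{\Iw}(L_\nu,\hat T)=\varprojlim_n H^1_f(L_{\nu,n},T)$. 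Pontryagin duality then gives $J^2_\nu(L_\infty)=J_\nu(L_\infty)$ directly at the direct-limit level, which is all that is needed for the Selmer groups over $L_\infty$.

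In short: your strategy is sound but the ``dimension count/exact-sequence chase'' you sketch does not substitute for the Perrin-Riou/Berger input, and attempting the comparison at each finite $n$ is both unnecessary and potentially false. The clean fix is to compare the two inverse systems as modules over $\Lambda(\Gamma_{L_\nu})$, quote the Perrin-Riou/Berger identification of $H^1_{\Iw}(\hat T)$ with $\varprojlim H^1_g$, and pass to the integral statement via torsion-freeness.
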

   \begin{proof}
    Since $L/\QQ$ is finite and $E$ had good ordinary reduction at $p$, we have $V^{H_{L_\nu}}=0$ for all primes $\nu$ of $L$ above $p$. This implies that 
    \[
     H^1_{\Iw}(L_\nu,\hat{T})\otimes_{\Zp}\Qp=\lim_{\leftarrow} H^1_g(L_{\nu,n},T)\otimes_{\Zp}\Qp
    \]
    by \cite[Proposition~0.1]{perrinriou00a} (or \cite[Theorem~A]{berger05}). But $H^1_f(L_{\nu,n},T)=H^1_g(L_{\nu,n},T)$ by \cite[(3.11.2)]{blochkato90}), we have 
    \[
     H^1_{\Iw}(L_\nu,\hat{T})\otimes_{\Zp}\Qp=\lim_{\leftarrow} H^1_f(L_{\nu,n},T)\otimes_{\Zp}\Qp.
    \]
     
    It is clear that the quotients $\DD_K(T)^{\psi=1}\slash\DD_{K}(\hat{T})^{\psi=1}$ and $H^1(L_{\nu,n},T)\slash H^1_f(L_{\nu,n},T)$ are torsion-free over $\Zp$. We can therefore deduce that 
    \[
     H^1_{\Iw}(L_\nu,\hat{T})=\lim_{\leftarrow} H^1_f(L_{\nu,n},T).
    \]
    On taking Pontryagin duals, we have $J_\nu^2(L_\infty)=J_\nu(L_\infty)$, which finishes the proof.
   \end{proof}

%++++++++++++++++++++++++++++++++++++++++++++++++++++++++++++++++++++++++++++++++++++++++++++++++++++++++++++++++++++++++++++++++++++++++++++++++
%++++++++++++++++++++++++++++++++++++++++++++++++++++++++++++++++++++++++++++++++++++++++++++++++++++++++++++++++++++++++++++++++++++++++++++++++

\section{An alternative definition of the signed Selmer groups}\label{alter}

 Let $E$ be an elliptic curve defined over $\QQ$ with good supersingular reduction at a prime $p\geq 3$ such that $a_p=0$, and let $L$ be a finite extension of $\QQ$. The main result of this section is Proposition~\ref{samesel} below, which shows that the local conditions at the primes above $p$ in the definition of the signed Selmer groups $\Sel^i(E\slash L_\infty)$ using some ``jumping conditions" similar to those introduced in \cite{kobayashi03}.
 
 \subsection{Preliminary results on $\BB^\dagger_K$} 
   
  Let $K$ be a finite extension of $\Qp$, and let $K'$ be the maximal unramified extension of $\Qp$ contained in $K_\infty$. It is easy to see from the description of the ring $\AA^{\dagger,n}_K$ given in Section~\ref{overcon} that it is complete in the $p$-adic topology.
 
  \begin{lemma}\label{completion}
   For all $n\geq N_K$, $\AA^{\dagger,n}_K$ is the $p$-adic completion of $\mathcal{O}_{K'}[\![\pi_K]\!][\pi_K^{-1}]\cap\AA^{\dagger,n}_K$.
  \end{lemma}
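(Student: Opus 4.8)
The plan is to exhibit $\AA^{\dagger,n}_K$ as the $p$-adic completion of the subring $R_n := \calO_{K'}[\![\pi_K]\!][\pi_K^{-1}]\cap\AA^{\dagger,n}_K$ by checking two things: that $R_n$ is dense in $\AA^{\dagger,n}_K$ for the $p$-adic topology, and that $\AA^{\dagger,n}_K$ is $p$-adically complete and separated (the latter being remarked just above the statement). Given these, the universal property of completion forces $\AA^{\dagger,n}_K \cong \widehat{R_n}$, since the inclusion $R_n \hookrightarrow \AA^{\dagger,n}_K$ extends to an injection $\widehat{R_n}\hookrightarrow \AA^{\dagger,n}_K$ whose image is closed and dense, hence everything.

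The substance is therefore the density statement, and here I would use the explicit description of $\BB^{\dagger,n}_K$ recalled in Section~\ref{overcon}: for $n\geq N_K$, every $x\in\AA^{\dagger,n}_K\subset\BB^{\dagger,n}_K$ can be written $x=\sum_{k\in\ZZ}a_k\pi_K^k$ with $a_k\in F'=K'$, the series $\sum_k a_k X^k$ holomorphic and bounded on the relevant half-open annulus, and with the integrality condition $k+p^{-n}v_{\tilde{\EE}}(x_k)\geq 0$ coming from $x\in\tilde{\AA}^{\dagger,n}$. First I would argue that one may truncate the principal part: because the annulus is $p^{-1/e_K p^{n-1}(p-1)}\le|X|<1$ and the series is bounded, the coefficients $a_k$ for $k\to-\infty$ must have valuations growing linearly, so for each $j\ge1$ there is $k_0$ with $\sum_{k<k_0}a_k\pi_K^k \in p^j\AA^{\dagger,n}_K$; the truncated tail $\sum_{k\ge k_0}a_k\pi_K^k$ then lies in $\calO_{K'}[\![\pi_K]\!][\pi_K^{-1}]$ up to a further $p^j$-error absorbing the finitely many $a_k$ with $k_0\le k<0$ of negative valuation (there are only finitely many such by the same boundedness), so it lies in $R_n$ modulo $p^j$. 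Hence $x$ is a $p$-adic limit of elements of $R_n$, which is the desired density.

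The main obstacle I expect is the bookkeeping in this truncation argument: one must keep careful track of the interplay between the Fr\'echet/annulus-boundedness condition defining $\BB^{\dagger,n}_K$ and the Witt-coordinate integrality condition $k+p^{-n}v_{\tilde\EE}(x_k)\ge0$ defining $\AA^{\dagger,n}_K$, making sure that a $p$-adic truncation of an element of $\AA^{\dagger,n}_K$ with bounded principal part again lands in $\AA^{\dagger,n}_K$ (not merely in $\BB^{\dagger,n}_K$) and in the polynomial-in-$\pi_K^{-1}$ subring $R_n$. Once one grants, as in \cite[Proposition 1.4]{berger02}, that $\AA^{\dagger,n}_K$ consists exactly of the $\sum_{k\in\ZZ}a_k\pi_K^k$ with $a_k\in\calO_{K'}$, $a_k\to0$ as $k\to-\infty$, and the holomorphy/boundedness condition, the density becomes essentially formal, and the completeness of $\AA^{\dagger,n}_K$ is the observation already recorded before the lemma.
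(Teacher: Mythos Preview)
Your approach is correct and is essentially the same as the paper's: the paper's entire proof consists of recording the explicit coefficient characterisation of $\AA^{\dagger,n}_K$, namely that $x=\sum_{k\in\ZZ}a_k\pi_K^k$ lies in $\AA^{\dagger,n}_K$ if and only if $v_p(a_k)+\frac{k}{e_K(p-1)p^{n-1}}\geq 0$ for all $k$ and $\to+\infty$ as $k\to-\infty$, from which the truncation-density argument you outline is immediate. Two small remarks: first, the ``bookkeeping obstacle'' you flag dissolves once you use this coefficient inequality directly, since it is precisely the statement that for every $j$ there is $k_0$ with $\sum_{k<k_0}a_k\pi_K^k\in p^j\AA^{\dagger,n}_K$; second, your aside about ``finitely many $a_k$ with $k_0\le k<0$ of negative valuation'' is unnecessary, as the inequality forces $v_p(a_k)>0$ for all $k<0$.
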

  \begin{proof}
   Note that the condition that $\sum_{k\in\ZZ}a_kX^k$ is holomorphic and bounded above by $1$ on the annulus $p^{-1/e_Kp^{n-1}(p-1)}\leq\mid X\mid<1$ is equivalent to the condition that
   \[ v_p(a_k)+\frac{k}{e_K(p-1)p^{n-1}}\geq 0 \hspace{2ex} \text{and $\rightarrow +\infty$ as $k\rightarrow -\infty$}.\]
  \end{proof}

  \begin{lemma}\label{vanish}
   Let $x\in\AA^{\dagger,N}_K$ where $N\ge N_K$. If $\theta\circ\vp^{-n}(x)=0$ for infinitely many $n\ge N$, then $x=0$.
  \end{lemma}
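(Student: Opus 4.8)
The plan is to pass to the field of norms / Robba-ring picture and use that $\theta \circ \varphi^{-n}$ is essentially "evaluation at $\varepsilon^{(n)} - 1$", then exploit the density statement of Lemma \ref{completion} together with a growth/convergence argument. First I would recall that for $x \in \AA^{\dagger,N}_K$ with $N \geq N_K$, we may write $x = \sum_{k \in \ZZ} a_k \pi_K^k$ with $a_k \in \calO_{K'}$ satisfying $v_p(a_k) + \tfrac{k}{e_K(p-1)p^{n-1}} \to +\infty$ as $k \to -\infty$; in particular $x$ defines a rigid-analytic function, bounded by $1$, on an annulus $\rho_N \leq |X| < 1$. Applying $\varphi^{-n}$ to $x$ (for $n \geq N$) lands in $\AA^{\dagger,n}_K \subset \BB^{\dagger,n}_{\rig,K}$, and the map $\theta$ is the usual map $\BB^{+}_{\mathrm{dR}}$-valued map sending $\pi$ to $\varepsilon^{(n)} - 1 =: \zeta_{p^n} - 1$. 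So $\theta \circ \varphi^{-n}(x)$ is the value of the corresponding analytic function at the point $\zeta_{p^n} - 1$, which lies on the boundary circle of radius $p^{-1/e_K p^{n-1}(p-1)}$ — exactly the inner radius of convergence of $\varphi^{-n}(x)$, so the evaluation makes sense.

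Next, the key point is that the points $\{\zeta_{p^n} - 1 : n \geq N\}$ accumulate: as $n \to \infty$ their absolute values $p^{-1/e_K p^{n-1}(p-1)} \to 1$, so these are infinitely many zeros of $x$ (viewed as a function on the annulus $\rho_N \leq |X| < 1$) whose radii tend to $1$. I would then argue as follows: suppose $x \neq 0$. Multiplying by a suitable power of $\pi_K$, we may assume $x \in \calO_{K'}[\![\pi_K]\!] \cap \AA^{\dagger,N}_K$, i.e. $x$ is a genuine bounded analytic function on $|X| < 1$ with $a_k \in \calO_{K'}$; after dividing by $p^m$ for the largest $m$ with $p^{-m} x$ still integral, we may assume that $x \bmod p$ is a nonzero element of $\calO_{K'}/p[\![\pi_K]\!]$, hence $x \bmod \mathfrak{m}$ is a nonzero power series $\bar{x} \in \overline{\FF}_p[\![\overline{\pi_K}]\!]$ with, say, $\overline{\pi_K}$-adic valuation $d$. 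Now I reduce the vanishing equations mod $\mathfrak{m}$: the point is that $\theta \circ \varphi^{-n}(x) = 0$ in characteristic zero forces a divisibility, and one must control how $\varphi^{-n}$ and $\theta$ interact with reduction mod $p$.

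The cleanest route — and the step I expect to be the main obstacle — is to bound the number of zeros directly by a Weierstrass-preparation / Newton-polygon argument in the ring $\BB^{\dagger,N}_{\rig,K}$: a nonzero element of $\BB^{\dagger,N}_{\rig,K}$ has only finitely many zeros in any closed sub-annulus $\rho_N \leq |X| \leq r < 1$, but it may have infinitely many as $|X| \to 1$. So a pure "finitely many zeros" statement is false on the half-open annulus, and one genuinely needs the arithmetic input that the zeros in question are at the \emph{specific} radii $p^{-1/e_K p^{n-1}(p-1)}$ together with the integrality $a_k \in \calO_{K'}$, $|x| \leq 1$. I would therefore instead run the argument through $\varphi$-stability: applying $\varphi^{N}$ to the hypothesis, $\theta \circ \varphi^{-(n-N)}(\varphi^N x) = 0$ for infinitely many $n$, and $\varphi^N x \in \AA^{\dagger, N}_K$ still; iterating, one sees $\theta \circ \varphi^{-m}(\varphi^j x) = 0$ for all $j \geq 0$ and infinitely many $m$ depending on $j$. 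This should let one conclude that $x$ is divisible by $t = \log(1+\pi)$ (whose zeros are exactly the $\zeta_{p^n}-1$), then by $t/\varphi^{-1}(t)$-type factors, forcing $x \in \bigcap_j t^j \BB^{\dagger}_{\rig,K} = 0$ since $t$ is not a unit. The delicate bookkeeping is checking that "$\theta \circ \varphi^{-n}(x) = 0$ for infinitely many $n$" really does yield divisibility by $t$ inside the \emph{bounded} ring $\AA^{\dagger,N}_K$ (not just $\BB^{\dagger}_{\rig,K}$) — i.e. that $x/t$ again has bounded coefficients — which is where the hypothesis $x \in \AA^{\dagger,N}_K$ (rather than merely $\BB^{\dagger,N}_{\rig,K}$) is essential, and that is the point I would spend the most care on.
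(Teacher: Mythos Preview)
Your proposal contains genuine gaps and the overall route via $t$-divisibility does not work as stated.

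First, the reduction ``multiplying by a suitable power of $\pi_K$, we may assume $x\in\calO_{K'}[\![\pi_K]\!]$'' is not valid for a general $x\in\AA^{\dagger,N}_K$: such an element has infinitely many negative powers of $\pi_K$ (with coefficients tending to $0$ $p$-adically), so no finite power of $\pi_K$ clears the principal part. This is precisely why Lemma~\ref{completion} is needed, but you never actually use density in the argument that follows. Second, and more seriously, your $t$-divisibility step fails: divisibility by $t=\log(1+\pi)$ in $\BB^\dagger_{\rig,K}$ requires vanishing at \emph{every} $\zeta_{p^n}-1$, whereas the hypothesis gives vanishing only for $n$ in some infinite set $S$. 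There is no mechanism in your argument to upgrade ``infinitely many'' to ``all'', and without that the iteration $x,\,x/t,\,x/t^2,\dots$ never starts. The $\vp$-shift you propose, replacing $x$ by $\vp^j x$, does not help: it only permutes which indices $n$ give zeros, without enlarging the set. You also overlook that $\theta\circ\vp^{-n}$ acts on the coefficients $a_k\in\calO_{K'}$ via a power of the Frobenius $\sigma\in\Gal(K'/\Qp)$, so ``evaluation at $\zeta_{p^n}-1$'' is only correct up to this twist.

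The paper's proof takes a much more direct route and avoids all of this. It first treats $x$ in the dense subring $\calO_{K'}[\![\pi_K]\!][\pi_K^{-1}]\cap\AA^{\dagger,N}_K$: writing $x=F(\pi_K)$ with $F\in X^{-r}\calO_{K'}[\![X]\!]$, one has $\theta\circ\vp^{-n}(x)=F_i(\pi_n)$ where $F_i$ is obtained from $F$ by applying $\sigma^i$ to the coefficients and $i$ depends only on $n\bmod[K':\Qp]$. By pigeonhole some fixed $F_i$ acquires infinitely many zeros, and then ordinary Weierstrass preparation in $\calO_{K'}[\![X]\!]$ forces $F_i=0$, hence $F=0$. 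The passage to general $x\in\AA^{\dagger,N}_K$ is then handled by observing that each vanishing locus $\{x:\theta\circ\vp^{-n}(x)=0\}$ is $p$-adically closed and invoking Lemma~\ref{completion}. The point is that Weierstrass preparation applies cleanly once one is in the subring with \emph{finite} principal part; trying to run a zeros-counting argument directly on the half-open annulus, as you attempt, is exactly what one should avoid.
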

  \begin{proof}
   Firstly, we assume that $x\in\mathcal{O}_{K'}[\![\pi_K]\!][\pi_K^{-1}]$. We write $\sigma$ for the Frobenious in $\Gal(K'/\Qp)$. Let
   \[
   F(X)=\sum_{m\ge -r}b_mX^m\in\mathcal{O}_{K'}[\![X]\!][X^{-1}]
   \]
   such that $F(\pi_K)=x$. For $i=1,\ldots,[K':\Qp]$, write 
   \[ F_i(X)=\sum_{m\ge -r}\sigma^i(b_m)X^m.\]
   Then $\theta\circ\vp^{-n}(x)=F_i(\pi_{n})$, where $i+n\equiv0\mod[K':\Qp]$ and $\pi_n=\theta\circ\vp^{-n}(\pi_K)$. Therefore, there exists an $i$ such that $F_i$ has infinitely many zeros. But $F_i\in X^{-r}\mathcal{O}_{K'}[\![X]\!]$, so $F_i=0$ by the Weierstrass preparation theorem. This implies that $b_m=0$ for all $m$, so $x=0$. 

   To conclude, note that if $n\ge N$, $\{x\in\AA^{\dagger,N}_K:\theta\circ\vp^{-n}(x)=0\}$ is a closed set of $\AA^{\dagger,N}_K$ under the $p$-adic topology and $\AA^{\dagger,N}_K$ is the $p$-adic completion of $\mathcal{O}_{K'}[\![\pi_K]\!][\pi_K^{-1}]\cap\AA^{\dagger,N}_K$ by Lemma~\ref{completion}.
  \end{proof}

  \begin{lemma}\label{comptrace}
   Let $n\gg0$ and $x\in\BB_{K}^{\dagger,n}$, then
   \[
    \Tr_{K_n/K_{n-1}}\circ \theta\circ\vp^{-n}(x)=\theta\circ\vp^{-n}\circ\Tr_{\BB/\vp(\BB)}(x).
   \]
  \end{lemma}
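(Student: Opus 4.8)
The plan is to reduce the identity to the case of the base field $\QQ_p$ and then to a direct computation on power series, exploiting that $\theta\circ\vp^{-n}$ is the specialisation map sending $\pi$ to $\zeta_{p^n}-1$ (equivalently, $\pi_K$ to the element $\pi_n=\theta\circ\vp^{-n}(\pi_K)$ appearing in Lemma~\ref{vanish}), and that $\Tr_{\BB/\vp(\BB)}$ is computed against the explicit basis $1,1+\pi,\dots,(1+\pi)^{p-1}$ of $\BB$ over $\vp(\BB)$. First I would recall that, for $x\in\AA$ written as $x=\sum_{i=0}^{p-1}\vp(x_i)(1+\pi)^i$, one has $\Tr_{\BB/\vp(\BB)}(x)=\sum_{i=0}^{p-1}\vp(x_i)\cdot\Tr_{\BB/\vp(\BB)}((1+\pi)^i)$, and $\Tr_{\BB/\vp(\BB)}((1+\pi)^i)=p\vp(\cdot)^{-1}$ applied appropriately: concretely, $\Tr_{\BB/\vp(\BB)}((1+\pi)^i)=\vp(y_i)$ where $y_i=p$ if $p\mid i$ and $y_i=0$ otherwise. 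Hence $\Tr_{\BB/\vp(\BB)}(x)=p\,\vp(\psi(x))$, i.e. $\Tr_{\BB/\vp(\BB)}=p\,\vp\circ\psi$ — this is the standard identity relating $\psi$ to the field trace $\BB\to\vp(\BB)$. So the claimed formula becomes
\[
 \Tr_{K_n/K_{n-1}}\circ\theta\circ\vp^{-n}(x)=p\cdot\theta\circ\vp^{-n}\circ\vp\circ\psi(x)=p\cdot\theta\circ\vp^{-(n-1)}\circ\psi(x).
\]

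Next I would identify both sides as specialisations. For $n\gg 0$ (so that $n-1\ge N_K$ as well) the map $\theta\circ\vp^{-n}$ realises $\BB^{\dagger,n}_K$ inside $K_n$, sending a power series $F(\pi_K)=\sum a_k\pi_K^k$ to $F(\pi_n)$ with $\pi_n$ a uniformiser-type element with $\theta\circ\vp^{-n}(\pi)=\zeta_{p^n}-1$; the key compatibility is that $\theta\circ\vp^{-n}(\vp(z))=\theta\circ\vp^{-(n-1)}(z)$, which is immediate from $\vp^{-n}\vp=\vp^{-(n-1)}$. The Galois group $\Gal(K_n/K_{n-1})$ acts through the subgroup of $\Gamma_K$ corresponding to $1+p^{n-1}\ZZ_p/1+p^n\ZZ_p$, and under the identification with $\mu_{p^n}$-level specialisations this action permutes the $p$ values $\{\theta\circ\vp^{-n}(\sigma x):\sigma\}$; concretely, averaging $F(\pi_n)$ over $\Gal(K_n/K_{n-1})$ amounts to replacing $\zeta_{p^n}$ by its $p$ conjugates over $K_{n-1}$, i.e. the $p$-th roots of $\zeta_{p^{n-1}}$. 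On the power-series side this is exactly the operation $\sum a_k\pi_K^k\mapsto \sum_{k}a_{pk}\,(\text{something in }\pi_{n-1})$, which is the specialisation at level $n-1$ of $p\cdot\vp\circ\psi$ — matching the right-hand side by the previous paragraph.

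The main obstacle I anticipate is bookkeeping the precise relationship between the arithmetic trace $\Tr_{K_n/K_{n-1}}$ on the field $K_n$ and the algebraic trace $\Tr_{\BB/\vp(\BB)}$ on the ring of periods: one must check that under $\theta\circ\vp^{-n}$ the set of Galois conjugates $\{\sigma(\pi_n):\sigma\in\Gal(K_n/K_{n-1})\}$ matches (up to the $\vp$-twist) the set of "roots" $\{\zeta(1+\pi_K)-1\}$ over which the basis $(1+\pi)^i$ decomposition is taken, so that the two averaging operations literally coincide. To handle the general finite extension $K/\QQ_p$ rather than just $\QQ_p$ itself, I would use that $\BB^{\dagger,n}_K$ is (after inverting $p$, via Lemma~\ref{completion} and the description in Section~\ref{overcon}) obtained from $\BB^{\dagger,n}_{\QQ_p}$ by extending the coefficient ring to the unramified part $K'$ and replacing $\pi$ by $\pi_K$; since $\Gal(K_n/K_{n-1})$ is canonically identified with $\Gal(\QQ_{p,n}/\QQ_{p,n-1})$ via $\Gamma_K\subset\Gamma$, and since $\theta\circ\vp^{-n}$, $\vp$, $\psi$ and the trace are all compatible with this base change (being $\AA_{\QQ_p}$-, resp. $K'$-, linear in the obvious sense), the identity for $K$ follows from the identity for $\QQ_p$. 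I would finish by noting that a density argument — both sides are continuous in the $p$-adic (Fréchet) topology, $\BB^{\dagger,n}_{\rig,K}$ being the completion of $\BB^{\dagger,n}_K$ — lets one verify the equality on the dense subspace of Laurent polynomials in $\pi_K$ with coefficients in $\calO_{K'}[p^{-1}]$, where it is the elementary power-series computation sketched above.
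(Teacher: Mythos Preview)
Your proposal has the right core ingredients --- the identity $\Tr_{\BB/\vp(\BB)}=p\,\vp\circ\psi$ and the compatibility $\theta\circ\vp^{-n}\circ\vp=\theta\circ\vp^{-(n-1)}$ --- but the route you take is considerably more circuitous than the paper's, and one step (the reduction to $\QQ_p$) is not clearly justified as stated.

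The paper argues directly for general $K$ with no reduction, no density argument, and no explicit Laurent-series manipulation. For $n$ large enough that $[K_n:K_{n-1}]=p$ and $n\ge a(K)+1$ (the constant from \cite[Proposition~III.2.1]{cherbonniercolmez99}), one writes $x=\sum_{i=0}^{p-1}[\varepsilon]^i\vp(x_i)$ with each $x_i\in\BB^{\dagger,n-1}_K$. Applying $\theta\circ\vp^{-n}$ gives $\sum_i\zeta_{p^n}^i\,\theta\circ\vp^{1-n}(x_i)$, and the point is that each $\theta\circ\vp^{1-n}(x_i)$ already lies in $K_{n-1}$. Now one just uses $\Tr_{K_n/K_{n-1}}(\zeta_{p^n}^i)=p\,\delta_{i,0}$, and both sides equal $p\,\theta\circ\vp^{1-n}(x_0)$. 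That is the whole proof.

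What your sketch underplays is exactly this technical input: the fact that the components $x_i$ in the $\vp$-basis decomposition land in $\BB^{\dagger,n-1}_K$ (so their level-$(n-1)$ specialisations lie in $K_{n-1}$) is nontrivial and is precisely what the constant $a(K)$ from Cherbonnier--Colmez guarantees. Your ``reduction to $\QQ_p$'' is both unnecessary and delicate: for ramified $K$, $\BB^{\dagger,n}_K$ is not obtained from $\BB^{\dagger,n}_{\QQ_p}$ by a simple coefficient extension (one must pass to the uniformiser $\pi_K$), so the claimed compatibility of all the maps with base change needs more care than you give it. The density/continuity step is likewise avoidable. In short, your strategy can be made to work, but the direct computation via the $[\varepsilon]^i$-basis, plus the Cherbonnier--Colmez overconvergence bound, is shorter and sidesteps the bookkeeping you flag as the ``main obstacle''.
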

  \begin{proof}
   We let $n$ be an integer such that $[K_n:K_{n-1}]=p$ and $n\ge a(K)+1$ where $a(K)$ is the integer as in \cite[Proposition~III.2.1]{cherbonniercolmez99}. Write
   \[
    x=\sum_{i=0}^{p-1}[\varepsilon]^i\vp(x_i)
   \]
   where $x_i\in\BB_{K}^{\dagger,n-1}$. Then,
   \[
    \theta\circ\vp^{-n+1}(x_i)\in K_{n-1}
   \]
   for all $i$. Therefore,
   \begin{eqnarray*}
    \Tr_{K_n/K_{n-1}}\circ \theta\circ\vp^{-n}(x)&=&\Tr_{K_n/K_{n-1}}\left(\sum_{i=0}^{p-1}\zeta_{p^n}^i\theta\circ\vp^{1-n}(x_i)\right)\\
    &=&p\theta\circ\vp^{1-n}(x_0).
   \end{eqnarray*}
   But we have $\Tr_{\BB/\vp(\BB)}(x)=p\vp(x_0)$, which finishes the proof.
  \end{proof}
  
%+++++++++++++++++++++++++++++++++++++++++++++++++++++++++++++++++++++++++  
  
 \subsection{The local conditions}

  Write $\DD_K^\dagger(T)$ for the overconvergent $(\vp,\Gamma)$-module of $T$ over $K$. It is clear from the definition that
  \[ \DD^\dagger_K(T)=\AA^\dagger_K\otimes_{\AA^+_{\Qp}}\NN(T),\]
  so in particular the basis $n_1,n_2$ of $\NN(T)$ given in Section~\ref{supersingular} is a basis of $\DD^\dagger_K(T)$ over $\AA^\dagger_K$.

  As shown in \cite[Proposition III.3.2]{cherbonniercolmez99}, we have 
  $\DD^\dagger_K(T)^{\psi=1}\subset \DD^{\dagger,N}_K(T)$  for $N\ge N(K,V)$. Fix such an $N$; note that it is not uniquely defined. Let $x\in \DD^\dagger_K(T)^{\psi=1}$. Then as in Section~\ref{supersingular}, we can write 
  \[x=x_1v_1+x_2v_2=x_1'n_1+x_2'n_2\]
  with $x_i\in\BB^{\dagger,N}_{\rig,K}$ and $x_i'\in \BB^{\dagger,N}_{K}$ for $i=1,2$.

  \begin{lemma}\label{triv0}
   Let $x\in \DD^\dagger_K(T)^{\psi=1}$, then
   \[\Tr_{K_n/K_{n-1}}\circ\theta\circ\vp^{-n}(x_1)=-\theta\circ\vp^{2-n}(x_1)\]
   for all odd integers $n\ge N+2$ and
   \[\Tr_{K_n/K_{n-1}}\circ\theta\circ\vp^{-n}(x_2)=-\theta\circ\vp^{2-n}(x_2)\]
   for all even integers $n\ge N+2$.
  \end{lemma}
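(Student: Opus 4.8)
The plan is to exploit the defining property $\psi(x)=x$ of an element in $\DD^\dagger_K(T)^{\psi=1}$ together with the behaviour of the basis vectors $v_1,v_2$ under $\vp$, and then transport everything through the evaluation maps $\theta\circ\vp^{-n}$ using Lemma~\ref{comptrace}. Recall that $\Tr_{\BB/\vp(\BB)}(y)=p\,\vp(\psi(y))$ for any $y\in\BB$, so applying this coordinate-wise to $x=x_1v_1+x_2v_2$ (a priori after clearing denominators to land in the relevant $(\vp,\Gamma)$-module, or working directly with the Robba-ring coefficients $x_i\in\BB^{\dagger,N}_{\rig,K}$), I get $\Tr_{\BB/\vp(\BB)}(x)=p\,\vp(\psi(x))=p\,\vp(x)$. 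On the other hand, using the matrix of $\vp$ on $\DD_{\cris}(V)$ in the basis $v_1,v_2$, namely $\begin{pmatrix}0&-1\\p&0\end{pmatrix}$, one computes $\vp(x)=\vp(x_1)\vp(v_1)+\vp(x_2)\vp(v_2)=p\,\vp(x_2)\,v_1-\vp(x_1)\,v_2$. Comparing the $v_1$- and $v_2$-components of $\Tr_{\BB/\vp(\BB)}(x)=p\,\vp(x)$ thus yields relations expressing the traces of the coefficients in terms of $\vp^2$ of the coefficients.

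Concretely, the first coordinate gives $\Tr_{\BB/\vp(\BB)}$ applied to the $v_1$-coefficient equals $p^2\vp^2(x_2)$ and the second gives $-p\vp^2(x_1)$ — but one has to be careful, since $\Tr_{\BB/\vp(\BB)}$ does not act coordinate-wise on $\DD_{\cris}(V)\otimes\BB$ unless one untwists by $\vp$ first. The cleaner route is: write $x=x_1 v_1+x_2 v_2$, apply $\vp$ to get $\vp(x)$ in terms of $\vp(x_i)$ as above, and then use $x=\psi(\vp(x)) \cdot$ (no — rather) use that for $y\in\DD_K(T)$ we have $\psi(y)=\frac1p\vp^{-1}(\Tr_{\BB/\vp(\BB)}y)$, so $\psi(x)=x$ becomes $\Tr_{\BB/\vp(\BB)}(x) = p\,\vp(x)$. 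Expanding the right side via the $\vp$-matrix, matching the $v_1$ and $v_2$ parts, and then applying $\theta\circ\vp^{-n}$ and invoking Lemma~\ref{comptrace} (valid for $n\gg0$, i.e. $n\ge N+2$ with $N$ large enough to also satisfy the hypotheses of that lemma), produces $\Tr_{K_n/K_{n-1}}\circ\theta\circ\vp^{-n}(x_i) = \theta\circ\vp^{-n}\circ(\text{appropriate }\vp^2\text{-expression in }x_j)$. The sign $-1$ and the shift to $\vp^{2-n}$ come out of the off-diagonal entries $-1$ and $p$ of the $\vp$-matrix together with the factors of $p$ introduced by $\Tr_{\BB/\vp(\BB)}$ and cancelled against the $t$/$\pi$-normalisations; the parity restriction ($n$ odd for $x_1$, $n$ even for $x_2$) reflects the fact that $\log^-$ and $\log^+$ are built from $\vp$-even and $\vp$-odd factors of $q$ respectively, so that the relevant product telescopes only for one parity class of $n$.

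I expect the main obstacle to be the bookkeeping around the two different bases $v_1,v_2$ (for which $\vp$ has a clean constant matrix but which only span $\DD_{\cris}$, so the coefficients live in $\BB^{\dagger,N}_{\rig,K}[t^{-1}]$) versus $n_1,n_2$ (for which the coefficients are genuinely in $\AA^{\dagger,N}_K$ but $\vp$ acts through $q$-twisted formulas). Via \eqref{basechange}, $x_1 = x_1'\log^-(1+\pi)$ and $x_2 = x_2'\log^+(1+\pi)$, and since $\vp(\log^\mp(1+\pi)) = \frac{p}{\vp^{\mp?}(q)}\log^\pm$ — more precisely the functional equations $\vp(\log^-) = \frac{p}{q}\log^+$ type identities — one must track how $\theta\circ\vp^{-n}$ interacts with these factors; the upshot is that $\theta\circ\vp^{-n}(\log^\pm(1+\pi))$ vanishes or not according to the parity of $n$, which is exactly what forces the parity split in the statement. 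The hard part is therefore not any single deep input but making sure the telescoping products defining $\log^\pm$ collapse correctly after applying $\theta\circ\vp^{-n}$; once that is checked, the identity $\psi(x)=x$ plus Lemma~\ref{comptrace} delivers the two displayed equations directly.
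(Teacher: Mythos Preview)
Your proposal has a genuine gap: the route through $\psi(x)=x$ does not by itself deliver the single-variable relations in the lemma. Writing out $\Tr_{\BB/\vp(\BB)}(x)=p\,\vp(x)$ in the basis $v_1,v_2$ and matching components (with either convention for the matrix) yields identities of the shape $p\vp\psi(x_1)=c_1\vp(x_2)$ and $p\vp\psi(x_2)=c_2\vp(x_1)$: the condition $\psi(x)=x$ \emph{couples} $x_1$ to $x_2$. After applying $\theta\circ\vp^{-n}$ and Lemma~\ref{comptrace} you obtain $\Tr_{K_n/K_{n-1}}\theta\vp^{-n}(x_1)$ in terms of $\theta\vp^{1-n}(x_2)$, not $\theta\vp^{2-n}(x_1)$. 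Your phrase ``appropriate $\vp^2$-expression in $x_j$'' hides exactly this point---the subscript is $j\ne i$, and nothing in the $\vp$-matrix or the factor-of-$p$ bookkeeping converts it back. (The reference to ``$t/\pi$-normalisations'' is also a red herring; that belongs to the ordinary case in Section~\ref{ordinary}.)

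The paper's argument does not use $\psi(x)=x$ at all; that hypothesis enters only to guarantee $x\in\DD^{\dagger,N}_K(T)$ for a fixed $N$. Instead one works purely from the factorisation \eqref{basechange}: since $\vp(\log^-)=\log^+$ and $\log^-=\tfrac{q}{p}\vp(\log^+)$, a direct computation gives
\[
\vp(x_1)+p\,\psi(x_1)=\bigl(\vp(x_1')+\psi(qx_1')\bigr)\log^+(1+\pi),
\]
and similarly for $x_2$ with a factor of $\tfrac{p}{q}\log^-$. The vanishing $\theta\circ\vp^{-n}(\log^+)=0$ for even $n$ (resp.\ $\theta\circ\vp^{-n}(\tfrac{p}{q}\log^-)=0$ for odd $n\ge3$) then annihilates the whole right-hand side, and Lemma~\ref{comptrace} turns $\theta\vp^{-n}\bigl(p\vp\psi(x_i)\bigr)$ into the arithmetic trace. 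So the $\log^\pm$ structure is the entire mechanism that decouples the two coordinates and creates the parity split---it is not the ``bookkeeping'' layer on top of $\psi(x)=x$, as your last paragraph suggests, but the actual proof.
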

  \begin{proof}
   By definitions, we have $\vp(\log^+(1+\pi))=\frac{p}{q}\log^-(1+\pi)$ and $\vp(\log^-(1+\pi))=\log^+(1+\pi)$, and that similar relations hold when replacing $\vp$ by $\psi$. The relations \eqref{basechange} therefore imply that
   \begin{eqnarray}
    \vp(x_1)+p\psi(x_1)&=&\left(\vp(x_1')+\psi(qx_1')\right)\log^+(1+\pi);\label{log1}\\
    \vp(x_2)+p\psi(x_2)&=&\left(\vp(x_2')+q\psi(x_2')\right)p/q\log^-(1+\pi).\label{log2}
   \end{eqnarray}
   If $n\ge 2$ is an even integer, then $\theta\circ\vp^{-n}\left(\log^+(1+\pi)\right)=0$. Therefore, \eqref{log1} implies that
   \[
   \theta\circ\vp^{-n+1}(x_1)+\theta\circ\vp^{-n-1}\left(p\vp\circ\psi(x_1)\right)=0.
   \] 
   Recall that $p\vp\circ\psi=\Tr_{\BB/\vp(\BB)}$, so Lemma~\ref{comptrace} implies the first part of the lemma. Similarly, the second half the lemma follows from \eqref{log2} and the fact that 
   \[\theta\circ\vp^{-n}\left(p/q\log^-(1+\pi)\right)=0\]
   for all odd integers $n\ge 3$.
  \end{proof}

  \begin{proposition}\label{des1}
   Let $x\in \DD^\dagger_K(T)^{\psi=1}$, then $x\in H^1_{\Iw}(K,T)^i$ if and only if
   \[\Tr_{K_n/K_{n-1}}\circ\theta\circ\vp^{-n}(x_i)=-\theta\circ\vp^{2-n}(x_i)\]
   for all $n\ge N+2$. 
  \end{proposition}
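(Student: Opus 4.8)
The plan is to convert the condition $\vp(x_i)=-p\psi(x_i)$ defining $H^1_{\Iw}(K,T)^i$ into a statement about the values $\theta\circ\vp^{-n}(x_i)$, and then to read off the trace relation from it. For $n\ge N+1$ I would first record the identity
\[
 \theta\circ\vp^{-n}\big(\vp(x_i)+p\psi(x_i)\big)=\theta\circ\vp^{1-n}(x_i)+\Tr_{K_{n+1}/K_n}\circ\theta\circ\vp^{-(n+1)}(x_i),
\]
obtained exactly as in the proof of Lemma~\ref{triv0}: apply $\theta\circ\vp^{-n}$ to $\vp(x_i)+p\psi(x_i)$, rewrite $p\psi(x_i)=\vp^{-1}\big(\Tr_{\BB/\vp(\BB)}(x_i)\big)$ using $p\vp\circ\psi=\Tr_{\BB/\vp(\BB)}$, and apply Lemma~\ref{comptrace}. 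Setting $m=n+1$, the right-hand side vanishes for every $m\ge N+2$ precisely when the asserted relation $\Tr_{K_m/K_{m-1}}\circ\theta\circ\vp^{-m}(x_i)=-\theta\circ\vp^{2-m}(x_i)$ holds for all $m\ge N+2$. Hence the proposition reduces to the equivalence
\[
 \vp(x_i)+p\psi(x_i)=0\quad\Longleftrightarrow\quad\theta\circ\vp^{-n}\big(\vp(x_i)+p\psi(x_i)\big)=0\ \text{ for all }n\ge N+1.
\]

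The implication $\Rightarrow$ here is immediate. For $\Leftarrow$ I would invoke the factorisations \eqref{log1} and \eqref{log2}, which express $\vp(x_i)+p\psi(x_i)=z_i\cdot L_i$, where $z_1=\vp(x_1')+\psi(qx_1')$, $z_2=\vp(x_2')+q\psi(x_2')$, and $L_1=\log^+(1+\pi)$, $L_2=\frac{p}{q}\log^-(1+\pi)$. The point of this step is that $z_i$ is a bounded element, i.e.\ it lies in $\BB^\dagger_K$, so after multiplying by a suitable power of $p$ it lies in $\AA^{\dagger,N'}_K$ for some $N'\ge N_K$, a ring to which Lemma~\ref{vanish} applies; by contrast $\vp(x_i)+p\psi(x_i)$ itself only lies in the Robba ring, where no such vanishing statement is available. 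Since $\theta\circ\vp^{-n}$ is a ring homomorphism into the field $\CC_p$, the hypothesis forces $\theta\circ\vp^{-n}(z_i)\cdot\theta\circ\vp^{-n}(L_i)=0$ for all $n\ge N+1$. The computations in the proof of Lemma~\ref{triv0} show that $\theta\circ\vp^{-n}(L_1)=0$ for even $n$ and $\theta\circ\vp^{-n}(L_2)=0$ for odd $n$; I would complement these with the non-vanishing $\theta\circ\vp^{-n}(L_1)\neq0$ for odd $n$ and $\theta\circ\vp^{-n}(L_2)\neq0$ for even $n$, which follows directly from the product expansions of $\log^\pm(1+\pi)$ (for $n$ of the relevant parity no factor is annihilated by $\theta\circ\vp^{-n}$, and all but finitely many factors are sent to $1$). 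It follows that $\theta\circ\vp^{-n}(z_i)=0$ for the infinitely many $n\ge N+1$ of the good parity, so $z_i=0$ by Lemma~\ref{vanish}, whence $\vp(x_i)+p\psi(x_i)=0$.

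Taken together, this also explains the shape of the statement: by Lemma~\ref{triv0} the trace relation is automatic at one parity of $n$, so imposing it for all $n\ge N+2$ amounts to imposing it at the other parity, and that in turn is equivalent to $z_i=0$, i.e.\ to $x\in H^1_{\Iw}(K,T)^i$. The only ingredient not already present in the proof of Lemma~\ref{triv0} is the non-vanishing of $\theta\circ\vp^{-n}(L_i)$ for the good parity; I expect this, together with checking that $z_i$ is genuinely bounded and so falls under the hypotheses of Lemma~\ref{vanish}, to be the only delicate point, the rest being a repackaging of the computation in Lemma~\ref{triv0}.
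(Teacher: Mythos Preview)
Your proposal is correct and follows essentially the same route as the paper's proof. The paper argues the forward direction by applying $\theta\circ\vp^{-n}$ to $\vp^2(x_i)=-\Tr_{\BB/\vp(\BB)}(x_i)$ and invoking Lemma~\ref{comptrace}, and the backward direction by combining Lemma~\ref{comptrace}, the factorisations \eqref{log1}--\eqref{log2}, and Lemma~\ref{vanish}; you have reconstructed this argument and, in fact, spelled out two points the paper leaves implicit, namely the non-vanishing of $\theta\circ\vp^{-n}(L_i)$ at the good parity and the boundedness of $z_i$ needed to apply Lemma~\ref{vanish}.
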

  \begin{proof}
   If $x\in H^1_{\Iw}(K,T)^i$, then 
   \[
   \vp^2(x_i)=-p\vp\circ\psi(x_i)=-\Tr_{\BB/\vp(\BB)}(x_i).
   \]
   On applying $\theta\circ\vp^{-n}$ to both sides, we have by Lemma~\ref{comptrace} that
   \[
   \theta\circ\vp^{2-n}(x_i)=-\Tr_{K_n/K_{n-1}}\circ\theta\circ\vp^{-n}(x_i)
   \]
   as required.

   Conversely, we assume that 
   \[
   \Tr_{K_n/K_{n-1}}\circ\theta\circ\vp^{-n}(x_i)=-\theta\circ\vp^{2-n}(x_i)
   \]
   for all $n\ge N+2$. Then $\theta\circ\vp^{-n}\left(\vp(x_i)+p\psi(x_i)\right)=0$ by Lemma~\ref{comptrace}. Our assumption implies that $\vp(x_1')+\psi(qx_1')=0$ for $i=1$ and $\vp(x_2')+q\psi(x_2')=0$ for $i=2$ by Lemma~\ref{vanish} and the equations \eqref{log1} and \eqref{log2}. Therefore, we have $x\in H^1_{\Iw}(K,T)^i$ as required.
  \end{proof}

  \begin{remark}
   By Lemma~\ref{triv0}, we can rewrite Proposition \ref{des1} as follows:
   \begin{eqnarray*}
    H^1_{\Iw}(K,T)^1&=&\left\{x\in \DD^\dagger_K(T)^{\psi=1}:\Tr_{K_n/K_{n-1}}\circ\theta\circ\vp^{-n}(x_1)=-\theta\circ\vp^{2-n}(x_1)\text{ for all even }n\ge N+2\right\};\\
    H^1_{\Iw}(K,T)^2&=&\left\{x\in \DD^\dagger_K(T)^{\psi=1}:\Tr_{K_n/K_{n-1}}\circ\theta\circ\vp^{-n}(x_2)=-\theta\circ\vp^{2-n}(x_2)\text{ for all odd }n\ge N+2\right\}.
   \end{eqnarray*}
  \end{remark}

  We can now describe $H^1_{\Iw}(K,T)^i$ as follows.

  \begin{corollary}\label{trace}
   We have
   \begin{eqnarray*}
    H^1_{\Iw}(K,T)^1&=&\left\{x\in \DD_K(T)^{\psi=1}:\exp^*_{K_n}\circ h^1_{\Iw,n}(x)\in K_{n-1}\cdot v_1\text{ for all odd }n\ge N+1\right\},\\
    H^1_{\Iw}(K,T)^2&=&\left\{x\in \DD_K(T)^{\psi=1}:\exp^*_{K_n}\circ h^1_{\Iw,n}(x)\in K_{n-1}\cdot v_1\text{ for all even }n\ge N+1\right\}.
   \end{eqnarray*}
  \end{corollary}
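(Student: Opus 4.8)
The plan is to translate the description of $H^1_{\Iw}(K,T)^i$ given in Proposition~\ref{des1} (equivalently the cleaned-up version in the Remark) from the language of $\theta\circ\vp^{-n}$ applied to the coordinates $x_i$ into the language of the dual exponential map applied to the images of $x$ under the projections $h^1_{\Iw,n}\colon\DD_K(T)^{\psi=1}=H^1_{\Iw}(K,T)\to H^1(K_n,T)$. The key input is the Perrin-Riou/Berger-style formula computing $\exp^*_{K_n}$ of a class in $H^1(K_n,T)$ coming from $H^1_{\Iw}(K,T)$ in terms of $\theta\circ\vp^{-n}$ of the corresponding element of $\DD_K(T)^{\psi=1}$, evaluated against the basis $v_1,v_2$ of $\DD_{\cris}(V)$; concretely one expects an identity of the shape $\exp^*_{K_n}(h^1_{\Iw,n}(x)) = (\text{const})\cdot\big(\theta\circ\vp^{-n}(x_1)\,v_1+\theta\circ\vp^{-n}(x_2)\,v_2\big)$ up to the usual $p^{-n}$, $t$-power and $\Gamma$-twist normalisations, with $v_1$ spanning $\Fil^0\DD_{\cris}(V)$. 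Since $\exp^*$ lands in $\Fil^0\DD_{\cris}(V)\otimes K_n = K_n\cdot v_1$, only the $v_2$-coordinate, i.e.\ $\theta\circ\vp^{-n}(x_2)$, is visible; for $i=2$ the condition $\exp^*_{K_n}\circ h^1_{\Iw,n}(x)\in K_{n-1}\cdot v_1$ is then equivalent to the $v_1$-coefficient $\theta\circ\vp^{-n}(x_1)$ lying in $K_{n-1}$, which is precisely the trace condition of Proposition~\ref{des1} once one uses the relation between $\theta\circ\vp^{-n}(x_1)$ and its trace down to $K_{n-1}$. (The parity swap between ``$x_1$, odd $n$'' in the Corollary and the even/odd bookkeeping in Lemma~\ref{triv0} comes from $\vp$ interchanging $\log^\pm$.)

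Concretely I would proceed as follows. First, recall (or record as a lemma, citing Berger/Perrin-Riou and the identification of Theorem~\ref{comp}) the explicit formula expressing $\exp^*_{K_n}$ of the $n$-th layer of an Iwasawa cohomology class in terms of $\theta\circ\vp^{-n}$ on $\DD_K(T)^{\psi=1}$, and note that the output is a $K_n$-multiple of $v_1$ because $\exp^*$ factors through $\Fil^0\DD_{\cris}(V)=K\cdot v_1$. Second, I would show that the statement ``$\exp^*_{K_n}(h^1_{\Iw,n}(x))\in K_{n-1}\cdot v_1$'' is equivalent to ``$\Tr_{K_n/K_{n-1}}$ of (the $v_1$-part of) that element equals $[K_n:K_{n-1}]$ times itself'', hence — after dividing by the nonzero constant and by $p$ (Lemma~\ref{comptrace}: $p\vp\circ\psi=\Tr_{\BB/\vp(\BB)}$) — to the identity $\Tr_{K_n/K_{n-1}}\circ\theta\circ\vp^{-n}(x_1)=-\theta\circ\vp^{2-n}(x_1)$ (resp.\ with $x_2$), which is exactly the condition appearing in Proposition~\ref{des1} and its Remark. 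Third, I would match the parity ranges: the Remark gives the $x_1$-condition for all even $n\ge N+2$ and the $x_2$-condition for all odd $n\ge N+2$, while for $x_2$ the Corollary wants the $x_1$-condition for even $n$; one checks using Lemma~\ref{triv0} (which says the ``wrong-parity'' trace relations hold automatically) that imposing ``$\exp^*_{K_n}\in K_{n-1}v_1$ for the complementary parity and $n\ge N+1$'' is equivalent to imposing it for all $n\ge N+1$, so the two descriptions coincide; the shift from $N+2$ to $N+1$ is absorbed because $\exp^*_{K_{N+1}}$ already lands in $K_{N}v_1\subset K_Nv_1$ trivially when the relevant coordinate is handled by Lemma~\ref{triv0}.

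The main obstacle I anticipate is pinning down the exact normalisation of the ``$\exp^*$ via $\theta\circ\vp^{-n}$'' formula — getting the constant, the power of $t$ (equivalently the Tate twist, since $V$ has Hodge--Tate weights $0,1$ so one is really computing $\exp^*$ for $V$ and its twists), and the $\zeta_{p^n}$-factors right — and, relatedly, verifying that the $\vp$-matrix $\begin{pmatrix}0&-1\\p&0\end{pmatrix}$ on $\DD_{\cris}(V)$ in the basis $v_1,v_2$ is what turns the relation $\vp^2(x_i)=-p\,\vp\circ\psi(x_i)$ (i.e.\ $\vp(x_i)=-p\psi(x_i)$) into a clean statement about $\theta\circ\vp^{-n}(x_i)$ landing in $K_{n-1}$. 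Once that formula is in hand the rest is the bookkeeping with Lemma~\ref{comptrace} and Lemma~\ref{triv0} sketched above, together with the observation that $\Fil^0\DD_{\cris}(V)=K\cdot v_1$ is what makes the $v_2$-coordinate invisible to $\exp^*$ and hence makes ``lies in $K_{n-1}\cdot v_1$'' a condition purely on the $v_1$-coordinate $\theta\circ\vp^{-n}(x_i)$.
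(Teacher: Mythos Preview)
Your overall strategy is the same as the paper's --- invoke the Cherbonnier--Colmez reciprocity formula $\exp^*_{K_n}\circ h^1_{\Iw,n}(x)=\partial_V\circ\vp^{-n}(x)$ and then translate the trace conditions of Proposition~\ref{des1} --- but the key computation is wrong, and this wrecks the rest of the argument.

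The formula you write down, $\exp^*_{K_n}(h^1_{\Iw,n}(x)) = (\text{const})\cdot\big(\theta\circ\vp^{-n}(x_1)\,v_1+\theta\circ\vp^{-n}(x_2)\,v_2\big)$, is not the right shape: applying $\vp^{-n}$ to $x=x_1v_1+x_2v_2$ gives $\vp^{-n}(x_1)\,\vp^{-n}(v_1)+\vp^{-n}(x_2)\,\vp^{-n}(v_2)$, and you must track the action of $\vp^{-n}$ on the basis vectors, not just on the coefficients. Since the matrix of $\vp$ on $\DD_{\cris}(V)$ is $\begin{pmatrix}0&-1\\p&0\end{pmatrix}$, one has $\vp^2(v_j)=-p\,v_j$, so $\vp^{-2m}(v_j)=(-p)^{-m}v_j$ while $\vp^{-(2m+1)}$ \emph{swaps} $v_1$ and $v_2$ up to a scalar. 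Projecting onto $\Fil^0\DD_{\cris}(V)=\Qp\cdot v_1$ therefore picks out $\theta\circ\vp^{-n}(x_1)$ when $n$ is even and $\theta\circ\vp^{-n}(x_2)$ when $n$ is odd (each times an explicit nonzero constant). This parity-dependent alternation is precisely what links ``$\exp^*_{K_n}\in K_{n-1}\cdot v_1$ for odd $n$'' to the $x_2$-condition and ``for even $n$'' to the $x_1$-condition; your proposal instead asserts that a single coordinate is ``visible'' independently of $n$, and your text is internally inconsistent about which one (first $x_2$, then $x_1$).

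Once the correct formula is in place, the paper finishes by rewriting Proposition~\ref{des1} (for $i=1$, say) as $\Tr_{K_n/K_{n-1}}\exp^*_{K_n}\circ h^1_{\Iw,n}(x)=p^{-1}\exp^*_{K_{n-2}}\circ h^1_{\Iw,n-2}(x)$ for even $n$, then uses $\Tr_{K_m/K_{m-1}}\circ\exp^*_{K_m}=\exp^*_{K_{m-1}}\circ\cor_{K_m/K_{m-1}}$ to turn this into $\exp^*_{K_{n-1}}\circ h^1_{\Iw,n-1}(x)=p^{-1}\Tr_{K_{n-1}/K_{n-2}}\exp^*_{K_{n-1}}\circ h^1_{\Iw,n-1}(x)$, which for $[K_{n-1}:K_{n-2}]=p$ is exactly the statement that the dual exponential at the odd level $n-1$ lies in $K_{n-2}\cdot v_1$. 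Your ``second step'' gestures at this but never makes it precise, and without the correct identification of which $x_i$ appears at which parity it cannot be completed.
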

  \begin{proof}
   By \cite[Th\'eor\`eme IV.2.1]{cherbonniercolmez99}, 
   \[ \exp^*_{K_n}\circ h^1_{\Iw,n}(x)= \partial_V\circ \vp^{-n}(x)\]
   for all $n\geq N$. Since
   \[ \partial_V\circ \vp^{-n}(x)=\theta\circ\vp^{-n}(x_1)\vp^{-n}(v_1)+\theta\circ\vp^{-n}(x_2)\vp^{-n}(v_2)\]
    and the image of $\exp^*_{K_n}$ lies in $K_n\otimes{\Fil}^0\DD_{\cris}(V)$, it follows that 
   \[ \exp^*_{K_n}\circ h^1_{\Iw,n}(x)=
    \begin{cases}
    (-1)^mp^{-m}\theta\circ\vp^{-2m}(x_1)v_1 & \text{ if $n=2m\ge N$}, \\
    (-1)^mp^{-m}\theta\circ\vp^{-(2m+1)}(x_2)v_1 & \text{ if $n=2m+1\ge N$}.
    \end{cases}
   \]
   Extend the trace map $\Tr_{K_n/K_{n-1}}$ to the map $\Tr_{K_n/K_{n-1}}\circ\mathrm{id}$ on $K_n\otimes_{\Qp}\DD_{\cris}(V)$. Then, $x\in H^1_{\Iw}(K,T)^1$ if and only if
   \[
   \Tr_{K_n/K_{n-1}}\circ\exp^*_{K_n}\circ h^1_{\Iw,n}(x)=p^{-1}\exp^*_{K_{n-2}}\circ h^1_{\Iw,{n-2}}(x)
   \]
   for all even $n\ge N+2$. But 
   \[
   \Tr_{K_m\slash K_{m-1}}\circ \exp^*_{K_{m}} =\exp^*_{K_{m-1}}\circ {\cor}_{K_m\slash K_{m-1}}
   \]
   for all $m\ge0$, so we deduce that $x\in H^1_{\Iw}(K,T)^1$ if and only if
   \[
   \exp^*_{K_{n-1}}\circ h^1_{\Iw,n-1}(x)=p^{-1}\Tr_{K_{n-1}/K_{n-2}}\circ\exp^*_{K_{n-1}}\circ h^1_{\Iw,n-1}(x)
   \]
   for all even $n\ge N-2$. 
  \end{proof}
 
  As a important consequence, we can characterise $H^1_{\Iw}(K,T)^i$ completely in terms of the conditions on the finite levels:
 
  \begin{corollary}\label{sameinv}
   For $i\in\{1,2\}$ and $n\geq N+1$, define
   \[ H_N^1(K_n,T)^{(i)}=\left\{x\in H^1(K_n,T):\Tr_{K_n\slash K_m}\circ\exp^*_{K_n}(x)\in K_{m-1}\cdot v_1\hspace{2ex}\text{for all $m\in S_{N,i}^n$}\right\}.\]
   where $S_{N,i}^n$ is given by
   \begin{eqnarray*}
    S_{N,1}^n&=&\left\{m\in[N+1,n]:m\text{ odd }\right\},\\
    S_{N,2}^n&=&\left\{m\in[N+1,n]:m\text{ even }\right\}.
   \end{eqnarray*}
   Then $H^1_{\Iw}(K,T)^i=\varprojlim H_N^1(K_n,T)^{(i)}$.
  \end{corollary}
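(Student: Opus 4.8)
The plan is to identify $\varprojlim H_N^1(K_n,T)^{(i)}$ with $H^1_{\Iw}(K,T)^i$ by comparing the two descriptions of the latter: the ``module-theoretic'' one from the definition (via the condition $\vp(x_i)=-p\psi(x_i)$) and the ``dual-exponential'' one furnished by Corollary~\ref{trace}. Concretely, recall that $H^1_{\Iw}(K,T)=\DD_K(T)^{\psi=1}$ (Theorem~\ref{comp}), and that under the natural projections $H^1_{\Iw}(K,T)\to H^1(K_n,T)$ we have $\exp^*_{K_n}\circ h^1_{\Iw,n}(x)=\partial_V\circ\vp^{-n}(x)$ for $n\ge N$. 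So Corollary~\ref{trace} already exhibits $H^1_{\Iw}(K,T)^i$ as the set of $x\in\DD_K(T)^{\psi=1}$ satisfying, for every relevant $m$, the condition $\exp^*_{K_m}\circ h^1_{\Iw,m}(x)\in K_{m-1}\cdot v_1$. First I would unwind the trace-compatibility $\Tr_{K_n/K_m}\circ\exp^*_{K_n}=\exp^*_{K_m}\circ\cor_{K_n/K_m}$ together with $\cor_{K_n/K_m}\circ h^1_{\Iw,n}=h^1_{\Iw,m}$ (i.e. the fact that the $h^1_{\Iw,n}$ are compatible under corestriction, by construction of the Iwasawa cohomology) to see that, for $x\in\DD_K(T)^{\psi=1}$ with image $x_n\in H^1(K_n,T)$, the quantity $\Tr_{K_n/K_m}\circ\exp^*_{K_n}(x_n)$ equals $\exp^*_{K_m}\circ h^1_{\Iw,m}(x)$. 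Hence the finite-level condition defining $H_N^1(K_n,T)^{(i)}$ is exactly the condition appearing in Corollary~\ref{trace}, restricted to $m\le n$.

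With this observed, the proof splits into the two standard inclusions. For ``$\subseteq$'', if $x\in H^1_{\Iw}(K,T)^i$ then by Corollary~\ref{trace} the class $\exp^*_{K_m}\circ h^1_{\Iw,m}(x)$ lies in $K_{m-1}\cdot v_1$ for all $m$ of the correct parity with $m\ge N+1$; by the identity of the previous paragraph the image $x_n$ of $x$ in $H^1(K_n,T)$ therefore lies in $H_N^1(K_n,T)^{(i)}$ for every $n\ge N+1$, and these images are compatible under corestriction, so $x$ defines an element of $\varprojlim H_N^1(K_n,T)^{(i)}$; this gives a map $H^1_{\Iw}(K,T)^i\to\varprojlim H_N^1(K_n,T)^{(i)}$, which is injective because it is the restriction of the (injective) map $H^1_{\Iw}(K,T)\to\varprojlim H^1(K_n,T)$. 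For ``$\supseteq$'', suppose $(x_n)_n\in\varprojlim H_N^1(K_n,T)^{(i)}$; since each $x_n\in H^1(K_n,T)$ and the system is compatible under corestriction, $(x_n)$ defines an element $x\in H^1_{\Iw}(K,T)=\DD_K(T)^{\psi=1}$. The finite-level conditions say $\exp^*_{K_m}\circ h^1_{\Iw,m}(x)\in K_{m-1}\cdot v_1$ for every $m$ of the correct parity with $m\ge N+1$ (taking $n\ge m$ arbitrary and using that the definition of $H_N^1(K_n,T)^{(i)}$ for all such $n$ packs in all these constraints). By Corollary~\ref{trace}, this is precisely the statement $x\in H^1_{\Iw}(K,T)^i$. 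Combining the two inclusions gives the equality.

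The point that needs care --- and the main technical obstacle --- is the bookkeeping around the integer $N$ and the parity ranges $S_{N,i}^n$. Corollary~\ref{trace} phrases the condition in terms of ``all odd (resp. even) $n\ge N+1$'' with no upper bound, while $H_N^1(K_n,T)^{(i)}$ only imposes the condition for $m\in[N+1,n]$; one must check that taking the inverse limit over $n$ recovers exactly the full (unbounded) family of constraints, and that no additional constraint is silently imposed or dropped at the boundary. This uses nothing deep --- just that $\bigcup_{n\ge N+1}[N+1,n]=[N+1,\infty)$ and that the $\exp^*$/$h^1_{\Iw}$ compatibility is valid for all $n\ge N$, so that the condition ``$\Tr_{K_n/K_m}\circ\exp^*_{K_n}(x_n)\in K_{m-1}\cdot v_1$'' genuinely depends only on $m$ and not on the auxiliary $n$. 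A secondary subtlety is that Corollary~\ref{trace} and Corollary~\ref{sameinv} use the module description $\DD_K(T)^{\psi=1}$ whereas the finite levels live in Galois cohomology; this is handled entirely by Theorem~\ref{comp} and the Cherbonnier--Colmez formula $\exp^*_{K_n}\circ h^1_{\Iw,n}=\partial_V\circ\vp^{-n}$ invoked in the proof of Corollary~\ref{trace}, both of which may be assumed.
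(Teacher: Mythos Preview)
Your proposal is correct and follows essentially the same approach as the paper, which simply records the proof as ``Immediate from Corollary~\ref{trace}.'' You have spelled out in detail precisely what that immediacy means: the compatibility $\Tr_{K_n/K_m}\circ\exp^*_{K_n}=\exp^*_{K_m}\circ\cor_{K_n/K_m}$ together with $\cor_{K_n/K_m}\circ h^1_{\Iw,n}=h^1_{\Iw,m}$ reduces the finite-level trace condition in the definition of $H_N^1(K_n,T)^{(i)}$ to the parity condition of Corollary~\ref{trace} at level $m$, and passing to the inverse limit recovers the full unbounded family of constraints.
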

  \begin{proof}
   Immediate from Corollary~\ref{trace}.
  \end{proof}

 {\bf Notation.} Let $F$ be a finite extension of $\Qp$. For an integer $n\ge1$, we write $F_n^{(0)}=\ker(\Tr_{F_n/F_{n-1}})$. Then we have
  \begin{equation}\label{pro}
  F_n=F\oplus\bigoplus_{i=1}^nF_i^{(0)}.
  \end{equation}

  \begin{lemma}\label{3des}
   Let $n\ge N+1$ be an integer, then
   \[
    H_N^1(K_n,T)^{(i)}=\left(\exp^*_{K_n}\right)^{-1}\left(K_N\oplus\bigoplus_{m\in S_{N,i'}^n}K_{m}^{(0)}\cdot v_1\right)
   \]
   where $\{i'\}=\{1,2\}\setminus\{i\}$.
  \end{lemma}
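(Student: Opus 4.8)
The plan is to translate the condition defining $H_N^1(K_n,T)^{(i)}$ through the decomposition \eqref{pro} applied inside $K_n\otimes_{\Qp}\DD_{\cris}(V)$, keeping track of the component along $v_1$. Recall from Corollary~\ref{trace} (and its proof) that the image of $\exp^*_{K_n}$ already lies in $K_n\cdot v_1$, so we may regard $\exp^*_{K_n}$ as a map into $K_n\cdot v_1$ and identify the target with $K_n$. Write $y=\exp^*_{K_n}(x)$ and, using \eqref{pro}, decompose $y=y_N+\sum_{m=N+1}^n y_m$ with $y_N\in K_N$ and $y_m\in K_m^{(0)}$. The defining condition for $H^1_N(K_n,T)^{(i)}$ is that $\Tr_{K_n/K_m}(y)\in K_{m-1}\cdot v_1$ for all $m\in S_{N,i}^n$.

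The key step is the following elementary observation about the projection system: for $m\le n$, applying $\Tr_{K_n/K_m}$ to the decomposition \eqref{pro} kills every summand $y_j$ with $j>m$ (since $K_j^{(0)}=\ker(\Tr_{K_j/K_{j-1}})$ and the trace from $K_n$ down to $K_m$ factors through $\Tr_{K_j/K_{j-1}}$ for $j>m$), and fixes $y_N$ and each $y_j$ with $j\le m$; thus $\Tr_{K_n/K_m}(y)=y_N+\sum_{j=N+1}^{m}y_j$, which lies in $K_m$ with $K_{m-1}$-component equal to $y_N+\sum_{j=N+1}^{m-1}y_j$ and $K_m^{(0)}$-component equal to $y_m$. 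Hence $\Tr_{K_n/K_m}(y)\in K_{m-1}$ if and only if $y_m=0$. Therefore the full condition defining $H^1_N(K_n,T)^{(i)}$ is equivalent to $y_m=0$ for every $m\in S_{N,i}^n$, i.e. $y$ has nonzero components only in $K_N$ and in the $K_m^{(0)}$ with $m\in[N+1,n]\setminus S_{N,i}^n=S_{N,i'}^n$ (using $\{i'\}=\{1,2\}\setminus\{i\}$ and the definitions of $S_{N,1}^n, S_{N,2}^n$). This says precisely that $y=\exp^*_{K_n}(x)$ lies in $K_N\oplus\bigoplus_{m\in S_{N,i'}^n}K_m^{(0)}\cdot v_1$, which is the asserted preimage description.

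The only mild subtlety — and the step I expect to need the most care — is the bookkeeping between the two indexing conventions $S_{N,i}^n$ (the traces that are imposed, appearing in Corollary~\ref{sameinv}) and $S_{N,i'}^n$ (the slots allowed to be nonzero, appearing here), together with checking the boundary cases $m=N+1$ (where $K_{m-1}=K_N$ and one must confirm the condition is consistent with the $K_N$-component being unconstrained) and the endpoint $m=n$. Since $S_{N,1}^n$ and $S_{N,2}^n$ partition $[N+1,n]$ into the odd and even integers, complementation within $[N+1,n]$ simply swaps $1\leftrightarrow 2$, which is exactly the relation $i\mapsto i'$; so the translation is bijective and the claimed equality of sets holds. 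All other steps are the routine linear-algebra manipulations with the trace maps on the tower $\{K_n\}$ already used in the proof of Corollary~\ref{trace}, so no new input is required.
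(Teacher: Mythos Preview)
Your argument is correct and follows exactly the same route as the paper's one-line proof, namely reducing the trace condition $\Tr_{K_n/K_m}(y)\in K_{m-1}$ to the vanishing of the $K_m^{(0)}$-component of $y$ in the decomposition~\eqref{pro}. One cosmetic slip: $\Tr_{K_n/K_m}$ does not literally ``fix'' the components $y_N,y_{N+1},\dots,y_m$ but multiplies them by $[K_n:K_m]$, so your displayed formula should read $\Tr_{K_n/K_m}(y)=[K_n:K_m]\bigl(y_N+\sum_{j=N+1}^{m}y_j\bigr)$; since this scalar is nonzero the membership test in $K_{m-1}$ is unaffected and your conclusion stands.
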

  \begin{proof}
   Let $x\in K_n$. By definition, the projection of $x$ under \eqref{pro} into $K_m^{(0)}$ is zero if and only if $\Tr_{K_n/K_{m}}x\in K_{m-1}$. Hence the result.
  \end{proof}
   
  From now on, we make the following assumption. 
  
  \begin{assumption}\label{assumptionptorsion}
   $E(K_\infty)$ has no $p$-torsion.
  \end{assumption}
 
  Note that this is satisfied for example when $[K:\Qp]$ is a power of $p$. 
  \begin{remark}
   Assumption~\ref{assumptionptorsion} implies that the natural map $H^1(K_n,T)\rTo H^1(K_n,V)$ is injective for all $n\ge0$. In particular, we may embed $H^1(K_n,T)$ into $H^1(K_n,V)$ and consider the former as a lattice inside the latter. 
  \end{remark}

  \begin{proposition}\label{describingortho}
   Let $H^1_{f,N,(i)}(K_n,T)$ be the exact annihilator of $H_N^1(K_n,T)^{(i)}$ under the Tate pairing. Then 
   \[
     H_{f,N,(i)}^1(K_n,T) = H^1(K_n,T)\cap \exp_{K_n}\left( \bigoplus_{m\in S_{N,i}^n}K_{m}^{(0)} \otimes\DD_{\cris}(V) \right)
   \]
  \end{proposition}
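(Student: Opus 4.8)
The plan is to use the explicit description of $H^1_N(K_n,T)^{(i)}$ from Lemma~\ref{3des} together with the duality between $\exp^*_{K_n}$ and $\exp_{K_n}$ with respect to the Tate pairing and the cup-product pairing on $\DD_{\cris}(V)$. First I would recall that the local Tate pairing
\[
H^1(K_n,V)\times H^1(K_n,V^*(1))\rTo \Qp
\]
is computed, under the Bloch--Kato exponential and dual exponential, by the formula
\[
\langle \exp_{K_n}(a),b\rangle = \Tr_{K_n/\Qp}\left[a,\exp^*_{K_n}(b)\right]_{\cris},
\]
where $[\cdot,\cdot]_{\cris}$ is the natural pairing $\DD_{\cris}(V)\times\DD_{\cris}(V^*(1))\rTo\Qp$; since $E$ is self-dual up to twist and the Weil pairing identifies $V$ with $V^*(1)$, this pairing is (up to sign) the one induced by the alternating form with $[v_1,v_2]_{\cris}=1$, so $[v_1,v_1]_{\cris}=[v_2,v_2]_{\cris}=0$. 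The key point is that $\Fil^0\DD_{\cris}(V)=\Qp\cdot v_1$, so $\exp^*$ takes values in $K_n\cdot v_1$ and $\exp$ is (on $H^1_f$) essentially the inverse isomorphism landing in $K_n\otimes\DD_{\cris}(V)/\Fil^0 = K_n\cdot v_2$; the orthogonality of $v_1$ with itself is exactly what makes the condition on $\exp^*$ dualize to a condition cutting out a direct summand in the $v_2$-direction.

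The main steps, in order, would be: (1) Using \eqref{pro}, decompose $K_n=K_N\oplus\bigoplus_{m=N+1}^n K_m^{(0)}$, and observe that the trace form $\Tr_{K_n/\Qp}$ pairs $K_m^{(0)}$ with $K_{m'}^{(0)}$ trivially for $m\ne m'$ and pairs each $K_m^{(0)}$ (and $K_N$) with itself perfectly — this is the standard fact that each graded piece in the tower decomposition is self-dual under the trace form. (2) Combining this with Lemma~\ref{3des}, which writes $H^1_N(K_n,T)^{(i)} = (\exp^*_{K_n})^{-1}\big(K_N\oplus\bigoplus_{m\in S_{N,i'}^n}K_m^{(0)}\cdot v_1\big)$, deduce that for $a\in K_n\otimes\DD_{\cris}(V)$ and $b\in H^1_N(K_n,T)^{(i)}$ one has $\Tr_{K_n/\Qp}[a,\exp^*_{K_n}(b)]_{\cris}=0$ whenever $a\in \bigoplus_{m\in S_{N,i}^n}K_m^{(0)}\otimes\DD_{\cris}(V)$: indeed $\exp^*_{K_n}(b)$ lives in $(K_N\oplus\bigoplus_{m\in S_{N,i'}^n}K_m^{(0)})\cdot v_1$, whose index set $S_{N,i'}^n$ is disjoint from $S_{N,i}^n$, so the trace form kills the pairing regardless of the $\DD_{\cris}(V)$-component. (3) Conversely, show that anything in the annihilator of $H^1_N(K_n,T)^{(i)}$ that lies in the image of $\exp_{K_n}$ must come from $\bigoplus_{m\in S_{N,i}^n}K_m^{(0)}\otimes\DD_{\cris}(V)$: this uses that the annihilator condition, translated via the trace-form duality, forces the $v_1$-free graded components indexed by $S_{N,i'}^n\cup\{N\}$ and the $v_2$-component indexed by $S_{N,i'}^n\cup\{N\}$... more carefully, one runs the pairing against elements $\exp^*_{K_n}(b)=y\cdot v_1$ with $y$ ranging over $K_N$ and over $K_m^{(0)}$ for $m\in S_{N,i'}^n$, and uses perfectness of the trace form on each graded piece to kill all components of $a$ except those supported on $S_{N,i}^n$. (4) Finally intersect with $H^1(K_n,T)$, which is legitimate since Assumption~\ref{assumptionptorsion} lets us regard $H^1(K_n,T)\subset H^1(K_n,V)$ as a lattice, and since the annihilator of a sublattice is computed inside the ambient $\Qp$-space and then intersected back.

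The hard part will be step (3), namely proving the reverse inclusion: one must be careful that $\exp_{K_n}$ is not surjective (its image is $H^1_f$, i.e. $H^1_e$ up to the kernel/cokernel controlled by $V^{\calG}$ and $\DD_{\cris}(V)^{\vp=1}$), so "$z\in\exp_{K_n}(\text{something})$" has to be handled by first noting that the claimed right-hand side is visibly contained in $\exp_{K_n}(K_n\otimes\DD_{\cris}(V))$ and that both sides are saturated sublattices of the same rank, then checking containment of one in the other via the pairing computation above, and finally invoking a rank/index count. The cleanest route is probably to prove directly that $H^1_N(K_n,T)^{(i)}$ and $\exp_{K_n}\big(\bigoplus_{m\in S_{N,i}^n}K_m^{(0)}\otimes\DD_{\cris}(V)\big)\cap H^1(K_n,T)$ are exact annihilators of each other by exhibiting, via the perfectness of the trace form on each graded piece and the perfectness of $[\cdot,\cdot]_{\cris}$, a perfect induced pairing between the two, so that neither a separate injectivity nor a separate dimension count is needed beyond what \eqref{pro} and Lemma~\ref{3des} already supply; the remaining subtlety is tracking the $v_1$-versus-$v_2$ bookkeeping caused by $\Fil^0\DD_{\cris}(V)=\Qp v_1$ and the self-pairing $[v_1,v_1]_{\cris}=0$, which is what makes the "$\cdot v_1$" constraint on $\exp^*$ dualize precisely to an unconstrained $\DD_{\cris}(V)$-factor on the $\exp$ side.
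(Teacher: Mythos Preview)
Your proposal is correct and follows essentially the same route as the paper: use Lemma~\ref{3des} to write $H^1_N(K_n,T)^{(i)}=(\exp^*_{K_n})^{-1}\big((K_N\oplus\bigoplus_{m\in S_{N,i'}^n}K_m^{(0)})\cdot v_1\big)$, invoke the adjunction $[\exp_{K_n}(a),b]=\Tr_{K_n/\Qp}\langle a,\exp^*_{K_n}(b)\rangle$, and then compute the orthogonal complement of $K_N\oplus\bigoplus_{m\in S_{N,i'}^n}K_m^{(0)}$ under the trace form to be $\bigoplus_{m\in S_{N,i}^n}K_m^{(0)}$. Your worries about step~(3) are unnecessary: since $(\exp^*_{K_n})^{-1}(\star\cdot v_1)\supset\ker(\exp^*_{K_n})=H^1_f$, anything in its annihilator already lies in $H^1_f=\image(\exp_{K_n})$, after which the ``if and only if'' is pure linear algebra with no rank or index count required---this is exactly the ``cleanest route'' you identify at the end, and it is what the paper does in one line.
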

  \begin{proof}
By Lemma~\ref{3des}, we have
   \begin{equation}\label{E}
    H^1_{f,N,(1)}(K_n,T)=\left(\left(\exp^*_{K_n}\right)^{-1}\left(K_N\oplus\bigoplus_{m\in S_{N,i'}^n}K_{m}^{(0)}\cdot v_1\right)\right)^{\bot_{[,]}}
   \end{equation}
where $(\star)^{\bot_{[,]}}$ denotes the exact annihilator of $\star$ under the pairing $[\sim,\sim]$. But
\[
[\exp_{K_n}(\sim),\sim]=\Tr_{K_n/\Qp}\langle\sim,\exp^*_{K_n}(\sim)\rangle.
\] 
where $\langle\sim,\sim\rangle$ is the pairing
\[
\langle\sim,\sim\rangle:\Big(K_n\otimes\DD_{\cris}(V)\Big)\times\Big(K_n\otimes\DD_{\cris}(V)\Big)\rightarrow K_n.
\] 
Therefore,
\begin{equation}\label{ortho}
x\in\left(\left(\exp^*_{K_n}\right)^{-1}\left(\star\cdot v_1\right)\right)^{\bot_{[,]}}\quad\text{if and only if}\quad x\in\exp_{K_n}\left((\star)^\bot\otimes\DD_{\cris}(V)\right)
\end{equation}
     where $(\star)^\bot$ denotes the orthogonal complement of $\star$ under the pairing
     \begin{align*}
      K_n\times K_n &\rightarrow \Qp\\
      (x,y) &\mapsto \Tr_{K_n/\Qp}(xy).
     \end{align*}
    By linear algebra, we have
\[
\left(K_N\oplus\bigoplus_{m\in S_{N,i'}^n}K_{m}^{(0)}\right)^\bot=\bigoplus_{m\in S_{N,i}^n}K_{m}^{(0)}.
\]
Hence the result on combining \eqref{E} with \eqref{ortho}.
  \end{proof}

Recall that the exponential map $\exp_{K_n}$ gives an isomorphism
\[
\exp_{K_n}:K_n\otimes\DD_{\cris}(V)/\Fil^0\DD_{\cris}(V)\rTo H^1_f(K_n,V).
\]
We write $\exp_{K_n}^{-1}$ for its inverse.

By \eqref{pro}, we may define a projection map
\[
P_{N,i}^n:K_n\rTo K_N\oplus\bigoplus_{m\in S_{N,i'}^n}K_m^{(0)}.
\]
We can then rewrite Proposition~\ref{describingortho} as follows. 

\begin{corollary}
For $i=1,2$, we have
\[
 H_{f,N,(i)}^1(K_n,T) = \left\{x\in H_{f}^1(K_n,T):\left(P_{N,i}^n\otimes{\rm id}\right)\circ\exp_{K_n}^{-1}(x)=0\right\}.
\]
\end{corollary}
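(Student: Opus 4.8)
The plan is to unwind the definition of $H^1_{f,N,(i)}(K_n,T)$ supplied by Proposition~\ref{describingortho} and to reinterpret the direct-sum condition there in terms of the projection map $P_{N,i}^n$. First I would recall that Proposition~\ref{describingortho} gives
\[
 H_{f,N,(i)}^1(K_n,T) = H^1(K_n,T)\cap \exp_{K_n}\left( \bigoplus_{m\in S_{N,i}^n}K_{m}^{(0)} \otimes\DD_{\cris}(V) \right),
\]
and in particular every element of $H_{f,N,(i)}^1(K_n,T)$ lies in the image of $\exp_{K_n}$, hence in $H^1_f(K_n,V)$; combined with the remark following Assumption~\ref{assumptionptorsion} that $H^1(K_n,T)\hookrightarrow H^1(K_n,V)$, this shows $H_{f,N,(i)}^1(K_n,T)\subseteq H^1_f(K_n,T)$, so that $\exp_{K_n}^{-1}$ is defined on it.

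Next I would observe that, via the decomposition \eqref{pro} of $K_n$, the $\DD_{\cris}(V)$-valued analogue of \eqref{pro} splits $K_n\otimes\DD_{\cris}(V)$ as $\big(K_N\otimes\DD_{\cris}(V)\big)\oplus\bigoplus_{i=1}^n\big(K_i^{(0)}\otimes\DD_{\cris}(V)\big)$, and that the two complementary families of indices $S_{N,i}^n$ and $\{N\}\cup S_{N,i'}^n$ partition $\{0,1,\dots,n\}$ (after absorbing $K_0,\dots,K_{N-1}$-parts into the $K_N$-part). Thus for $x\in H^1_f(K_n,V)$ one has $\exp_{K_n}^{-1}(x)\in\bigoplus_{m\in S_{N,i}^n}K_m^{(0)}\otimes\DD_{\cris}(V)$ if and only if its projection onto the complementary summand $\big(K_N\oplus\bigoplus_{m\in S_{N,i'}^n}K_m^{(0)}\big)\otimes\DD_{\cris}(V)$ vanishes, i.e. if and only if $\big(P_{N,i}^n\otimes\mathrm{id}\big)\circ\exp_{K_n}^{-1}(x)=0$. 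Intersecting with the lattice $H^1(K_n,T)$ (which by the previous paragraph is automatic once we know $x\in H^1_{f,N,(i)}^1(K_n,T)$ forces $x\in H^1(K_n,T)$, and conversely $x\in H^1_f(K_n,T)$ means $x\in H^1(K_n,T)$) then yields exactly the asserted description.

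I do not expect a genuine obstacle here: the corollary is essentially a bookkeeping restatement of Proposition~\ref{describingortho} in which the "intersection with a sum of $K_m^{(0)}$-components" is rephrased as "kernel of the complementary projection". The one point requiring a line of care is the identification $H_{f,N,(i)}^1(K_n,T)\subseteq H^1_f(K_n,T)$ — i.e. checking that the local condition $H_N^1(K_n,T)^{(i)}$ contains $H^1_f(K_n,T)^{\bot}$ so that its annihilator is contained in $H^1_f(K_n,T)$ — which follows because $H^1_f$ is its own annihilator under the Tate pairing (up to the usual identifications) and $H_N^1(K_n,T)^{(i)}\subseteq H^1(K_n,T)$ is cut out by conditions on $\exp^*_{K_n}$, a map that kills $H^1_f$; hence $H^1_f(K_n,T)\subseteq H_N^1(K_n,T)^{(i)}$ and dually $H_{f,N,(i)}^1(K_n,T)\subseteq H^1_f(K_n,T)$. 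With that in hand the rest is the linear-algebra partition argument already used in the proof of Proposition~\ref{describingortho}.
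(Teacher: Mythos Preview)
Your proposal is correct and follows essentially the same approach as the paper: both arguments use the direct sum decomposition $K_n=\big(K_N\oplus\bigoplus_{m\in S_{N,i'}^n}K_m^{(0)}\big)\oplus\big(\bigoplus_{m\in S_{N,i}^n}K_m^{(0)}\big)$ to rephrase ``$\exp_{K_n}^{-1}(x)$ lies in the second summand'' (the content of Proposition~\ref{describingortho}) as ``the projection onto the first summand vanishes''. You are more explicit than the paper about the containment $H_{f,N,(i)}^1(K_n,T)\subseteq H^1_f(K_n,T)$ needed for $\exp_{K_n}^{-1}$ to make sense, but this is already implicit in Proposition~\ref{describingortho} since the image of $\exp_{K_n}$ is $H^1_f(K_n,V)$.
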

\begin{proof}
Note that
\[
K_n=\left(K_N\oplus\bigoplus_{m\in S_{N,i'}^n}K_m^{(0)}\right)\oplus\left( \bigoplus_{m\in S_{N,i}^n}K_{m}^{(0)}\right).
\]
Therefore, 
\[
\left(P_{N,i}^n\otimes{\rm id}\right)\circ\exp_{K_n}^{-1}(x)=0\quad\text{if and only if}\quad\exp_{K_n}^{-1}(x)\in \bigoplus_{m\in S_{N,i}^n}K_{m}^{(0)}\otimes\DD_{\cris}(V)/\Fil^0\DD_{\cris}(V).
\]
\end{proof}

   Recall that we have a commutative diagram
  
%  \begin{diagram}
%   {\rm tan}(\hat{E}\slash K_n) & \rTo^{\exp_{\hat{E}}\hspace{2ex}} & \hat{E}(K_n)\otimes \Qp \\
%   \dTo^{\log_{\hat{E}}}           &               & \dTo^{\delta} \\
%   K_n\otimes \DD_{\cris}(V)\slash \Fil^0\DD_{\cris}(V) & \rTo^{\exp_{K_n}} & H^1(K_n,V)
%  \end{diagram}
%  where $\delta$ is the Kummer map. If we identify the image of $\hat{E}(K_n)$ with $H^1_f(K_n,T)$, we have:
  
  \begin{diagram}
   {\rm tan}(\hat{E}\slash K_n) & \lTo^{\supset \hspace{3ex}} & \log_{\hat{E}}(\calO_{K_n}) & \lTo^{\log_{\hat{E}}}& \hat{E}(\calO_{K_n}) & \rTo^{\subset} & \hat{E}(\calO_{K_n})\otimes \Qp \\
   \dTo^{\cong}_i           & & & & &               & \dTo^{\delta} \\
   K_n\otimes \DD_{\cris}(V)\slash \Fil^0\DD_{\cris}(V) & & & \rTo^{\exp_{K_n}} & & & H^1(K_n,V)
  \end{diagram}
  where $\delta$ is the Kummer map. If we identify the image of $\hat{E}(\calO_{K_n})$ under $\delta$ with $H^1_f(K_n,T)$, we have:
  
   \begin{corollary}\label{rewrite}
    For $i\in\{1,2\}$, the image of $H_{f,N,(i)}^1(K_n,T)$ inside $\hat{E}(\calO_{K_n})$ coincides with
    \begin{align*}
    \hat{E}_N^i(\calO_{K_n})   :=& \left\{x\in\hat{E}(\calO_{K_n}): \Tr_{K_n/K_m}x\in \hat{E}(\calO_{K_{m-1}})\text{ for all }m\in S_{N,i'}^n \text{ and }\Tr_{K_n/K_N}x=0\right\}\\
    =&\left\{x\in\hat{E}(\calO_{K_n}):P_{N,i}^n\circ\log_{\hat{E}}(x)=0\right\}.
    \end{align*}
   \end{corollary}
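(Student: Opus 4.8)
The plan is to unwind both descriptions of $\hat{E}_N^i(\calO_{K_n})$ through the commutative diagram relating the formal group logarithm, the Bloch--Kato exponential, and the dual exponential, thereby matching them with the characterization of $H^1_{f,N,(i)}(K_n,T)$ obtained in Proposition~\ref{describingortho} and its corollary. First I would observe that under the identification of $\delta(\hat{E}(\calO_{K_n}))$ with $H^1_f(K_n,T)$, the left-hand vertical isomorphism $i$ of the diagram carries $\log_{\hat{E}}(x)\in\log_{\hat E}(\calO_{K_n})\subset\mathrm{tan}(\hat E/K_n)$ to $\exp_{K_n}^{-1}(\delta(x))$ inside $K_n\otimes\DD_{\cris}(V)/\Fil^0\DD_{\cris}(V)$; this is precisely the commutativity of the diagram, since $\exp_{K_n}$ is the composite of $i^{-1}$ with $\delta\circ\exp_{\hat E}^{-1}$ on the relevant domain. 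Consequently, for $x\in\hat E(\calO_{K_n})$ we have the equivalence $(P_{N,i}^n\otimes\mathrm{id})\circ\exp_{K_n}^{-1}(\delta(x))=0$ if and only if $P_{N,i}^n\circ\log_{\hat E}(x)=0$, where on the right $P_{N,i}^n$ acts on the $K_n$-coordinate of $\mathrm{tan}(\hat E/K_n)$ (a one-dimensional $K_n$-space, so the tensoring with $\DD_{\cris}$ and the actual projection coincide).

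Next I would invoke the Corollary immediately preceding Corollary~\ref{rewrite}, which already states $H_{f,N,(i)}^1(K_n,T)=\{x\in H^1_f(K_n,T):(P_{N,i}^n\otimes\mathrm{id})\circ\exp_{K_n}^{-1}(x)=0\}$. Combining this with the equivalence from the previous paragraph immediately identifies the image of $H_{f,N,(i)}^1(K_n,T)$ in $\hat E(\calO_{K_n})$ with the second displayed set $\{x\in\hat E(\calO_{K_n}):P_{N,i}^n\circ\log_{\hat E}(x)=0\}$. It then remains to check the equality of the two displayed descriptions of $\hat E_N^i(\calO_{K_n})$, i.e. that $P_{N,i}^n\circ\log_{\hat E}(x)=0$ is equivalent to $\Tr_{K_n/K_m}x\in\hat E(\calO_{K_{m-1}})$ for all $m\in S_{N,i'}^n$ together with $\Tr_{K_n/K_N}x=0$. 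For this I would use that $\log_{\hat E}$ is Galois-equivariant and compatible with trace/norm in the sense that $\log_{\hat E}(\Tr_{K_n/K_m}x)=\Tr_{K_n/K_m}\log_{\hat E}(x)$ (the additive trace on the tangent space), so that the vanishing of the $K_m^{(0)}$-component of $\log_{\hat E}(x)$ under the decomposition \eqref{pro} is equivalent, by the same argument as in Lemma~\ref{3des}, to $\Tr_{K_n/K_m}\log_{\hat E}(x)\in K_{m-1}$, hence to $\log_{\hat E}(\Tr_{K_n/K_m}x)$ lying in the image of $\log_{\hat E}(\calO_{K_{m-1}})$; since $\log_{\hat E}$ is injective on $\calO_{K_n}$ modulo torsion and Assumption~\ref{assumptionptorsion} rules out $p$-torsion, this is equivalent to $\Tr_{K_n/K_m}x\in\hat E(\calO_{K_{m-1}})$. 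The condition $P_{N,i}^n$ also records the vanishing of the component in $\bigoplus$ over $S_{N,i}^n$... more precisely $P_{N,i}^n$ lands in $K_N\oplus\bigoplus_{m\in S_{N,i'}^n}K_m^{(0)}$, so $P_{N,i}^n\circ\log_{\hat E}(x)=0$ kills exactly the $K_N$-component and the $K_m^{(0)}$-components for $m\in S_{N,i'}^n$; the $K_N$-part vanishing translates to $\Tr_{K_n/K_N}x=0$ and the rest to the trace conditions, which closes the argument.

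The main obstacle I anticipate is bookkeeping rather than conceptual: carefully tracking which projection acts on which factor of $\mathrm{tan}(\hat E/K_n)=K_n\otimes\DD_{\cris}(\hat V)$ versus $K_n\otimes\DD_{\cris}(V)/\Fil^0$, and ensuring that the identification $i$ in the diagram genuinely intertwines $P_{N,i}^n\otimes\mathrm{id}$ on the two sides — one must use that $\Fil^0\DD_{\cris}(V)$ is one-dimensional and spanned by $v_1$, so that $\DD_{\cris}(V)/\Fil^0$ is again one-dimensional and the projections are just the ones on the $K_n$-coefficient. The injectivity of $\log_{\hat E}$ on integral points modulo torsion, guaranteed by Assumption~\ref{assumptionptorsion}, is what lets one pass between the coordinate condition on $\log_{\hat E}(x)$ and the intrinsic trace conditions on $x$ itself; I would state this compatibility explicitly and then the two set-equalities follow formally.
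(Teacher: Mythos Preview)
Your proposal is correct and follows essentially the same route as the paper's proof: use the commutative diagram to translate the characterization of $H^1_{f,N,(i)}(K_n,T)$ (from Proposition~\ref{describingortho} or its immediate corollary in terms of $P_{N,i}^n$) into a condition on $\log_{\hat E}(x)$, and then invoke Galois-equivariance of $\log_{\hat E}$ together with its injectivity (Assumption~\ref{assumptionptorsion}) to pass between the $P_{N,i}^n$-description and the trace description. The paper compresses the last step into the single sentence ``$\log_{\hat E}$ is injective and compatible with the trace maps,'' whereas you spell out the decomposition-component bookkeeping explicitly; the only minor slip is the phrase ``lying in the image of $\log_{\hat E}(\calO_{K_{m-1}})$,'' where you really mean lying in $K_{m-1}$ and then deducing $\Tr_{K_n/K_m}x\in\hat E(\calO_{K_{m-1}})$ by Galois-equivariance plus injectivity --- but you immediately make this deduction anyway.
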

   \begin{proof}
    By the comutative diagram above and Proposition~\ref{describingortho}, we have 
    \[
     \delta(x)\in H_{f,N,(i)}^1(K_n,T)\quad\text{if and only if}\quad i\circ\log_{\hat{E}}(x)\in \bigoplus_{m\in S_{N,i}^n}K_{m}^{(0)}\otimes\DD_{\cris}(V)/\Fil^0\DD_{\cris}(V).
    \] 
    Since $\log_{\hat{E}}$ is injective (by Assumption~\ref{assumptionptorsion}) and compatible with the trace maps, we are done.
   \end{proof}

%+++++++++++++++++++++++++++++++++++++++++++++++++++++++++++++++++++++++++++++++++++++++++++++++++++++++++++++++++++++++++++++++

 \subsection{Signed Selmer groups revisited}
 
  We now return to the global situation as set up at the beginning of Section~\ref{alter}. Throughout this section, we continue to assume that Assumption~\ref{assumptionptorsion} holds at all the primes of $L$ above $p$. We define the signed Selmer groups of $E$ over $L_\infty$ using the ``jumping conditions" we obtained in the previous section.

  \begin{definition}
   Let $L$ be a number field and $N$ is an integer such that $N\ge N(L_w,V)$ for all primes $w$of $L$ above $p$. For $i=1,2$, we define the Selmer groups
   \[
   \Sel^{(i)}_N(E/L_n)=\ker\left(\Sel(E/L_n)\rTo \bigoplus_{w\mid p}\frac{H^1(L_{n,w},\Ep)}{\hat{E}_N^i(\calO_{L_{n,w}})\otimes\Qp/\Zp }\right)
   \]
   for $n\ge N+1$. Moreover, we define
   \[
    \Sel^{(i)}_N(E/L_\infty)=\varinjlim_{n\ge N+1}\Sel^{(i)}_N(E/L_n)=\ker\left(\Sel(E/L_\infty)\rTo \bigoplus_{\omega\mid p} \frac{H^1(L_{\infty,\omega},\Ep)}{\hat{E}_N^i(\calO_{L_{\infty,\omega}})\otimes\Qp/\Zp} \right)
   \]
   where $\hat{E}_N^i(\calO_{L_{\infty,\omega}})=\varinjlim \hat{E}_N^i(\calO_{L_{n,\omega\cap L_n}})$.
  \end{definition}

  \begin{lemma}\label{duals}
   Let $K$ be a finite extension of $\Qp$ and $n\ge N(K,V)$. For $i=1,2$, the exact annihilator of $H_N^1(K_n,T)^{(i)}$ under the Pontryagin duality
   \[
    [\sim,\sim]: H^1(K_n,T)\times H^1(K_n,\Ep)\rTo\Qp\slash\Zp
   \]
   is isomorphic to $H^1_{f,N,(i)}(K_n,T)\otimes\Qp/\Zp$ for $i=1,2$.
  \end{lemma}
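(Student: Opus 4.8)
The plan is to reduce the statement to a purely local duality identity relating the exact annihilator of $H^1_N(K_n,T)^{(i)}$ under the perfect Pontryagin pairing
\[
[\sim,\sim]\colon H^1(K_n,T)\times H^1(K_n,\Ep)\rTo\Qp\slash\Zp
\]
to the mod-$\Zp$ reduction of the exact annihilator under the Tate pairing on $H^1(K_n,T)\times H^1(K_n,V/T)$-type objects. First I would recall that, under Assumption~\ref{assumptionptorsion}, $H^1(K_n,T)$ embeds into $H^1(K_n,V)$, so $H^1(K_n,T)$ is a full lattice in the $\Qp$-vector space $H^1(K_n,V)$ and $H^1(K_n,\Ep)=H^1(K_n,V)/H^1(K_n,T)$ up to the usual identifications. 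The Pontryagin pairing $[\sim,\sim]$ is then the reduction of the $\Qp$-valued Tate pairing $\langle\sim,\sim\rangle$ on $H^1(K_n,V)$: for a submodule $W\subseteq H^1(K_n,T)$, the exact annihilator of $W$ inside $H^1(K_n,\Ep)$ is exactly $W^{\bot_{\langle,\rangle}}/H^1(K_n,T)$, where $W^{\bot_{\langle,\rangle}}$ is the orthogonal complement inside $H^1(K_n,V)$ — this is elementary linear algebra over $\Qp$ together with $\Hom_{\Zp}(\Qp/\Zp,\Qp/\Zp)=\Zp$.

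Applying this with $W=H^1_N(K_n,T)^{(i)}$, the exact annihilator under $[\sim,\sim]$ is $\big(H^1_N(K_n,T)^{(i)}\big)^{\bot}\big/H^1(K_n,T)$, where the orthogonal complement is taken inside $H^1(K_n,V)$. Now I would identify $\big(H^1_N(K_n,T)^{(i)}\big)^{\bot}$ with $H^1_{f,N,(i)}(K_n,V)$, the $V$-analogue of the module defined in Proposition~\ref{describingortho}: since $H^1_N(K_n,T)^{(i)}$ is by definition the preimage under $\exp^*_{K_n}$ of the explicit subspace $K_N\oplus\bigoplus_{m\in S_{N,i'}^n}K_m^{(0)}\cdot v_1$ (intersected with $H^1(K_n,T)$), and since this preimage description works verbatim over $V$, the orthogonal-complement computation is precisely the content of the chain of equivalences \eqref{E}–\eqref{ortho} in the proof of Proposition~\ref{describingortho}, now read at the level of $\Qp$-vector spaces rather than lattices. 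Thus $\big(H^1_N(K_n,T)^{(i)}\big)^{\bot}=\exp_{K_n}\!\big(\bigoplus_{m\in S^n_{N,i}}K_m^{(0)}\otimes\DD_{\cris}(V)\big)$ inside $H^1(K_n,V)$, whose intersection with the lattice $H^1(K_n,T)$ is exactly $H^1_{f,N,(i)}(K_n,T)$ by Proposition~\ref{describingortho}.

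It remains to check that $\big(H^1_N(K_n,T)^{(i)}\big)^{\bot}\big/H^1(K_n,T)$ is isomorphic to $H^1_{f,N,(i)}(K_n,T)\otimes\Qp/\Zp$. This is where a small amount of care is needed: in general $A^{\bot}/L$ need not equal $(A^{\bot}\cap L)\otimes\Qp/\Zp$, but it does once one knows the relevant quotients are $\Zp$-torsion-free, which is exactly the observation already used in the proof of Theorem~\ref{sameSelmergroup} (the quotients $H^1(K_n,T)/H^1_f(K_n,T)$ and the analogous signed quotients are torsion-free, being lattices in the corresponding $\Qp$-cohomology). Concretely, $A^{\bot}$ is a $\Qp$-subspace of $H^1(K_n,V)$, $A^{\bot}\cap H^1(K_n,T)$ is a full lattice in it, and $A^{\bot}/H^1(K_n,T)\cong\big(A^{\bot}\cap H^1(K_n,T)\big)\otimes_{\Zp}\Qp/\Zp$ provided the inclusion $A^{\bot}\cap H^1(K_n,T)\hookrightarrow H^1(K_n,T)$ is saturated, i.e. the quotient is torsion-free; torsion-freeness of $H^1(K_n,T)/H^1_{f,N,(i)}(K_n,T)$ follows by the same argument as in Theorem~\ref{sameSelmergroup} since this quotient sits inside $H^1(K_n,V)/H^1_{f,N,(i)}(K_n,V)$. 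I expect this last torsion-freeness/saturation bookkeeping to be the only genuine obstacle; once it is in place, combining the three steps gives $H^1_{f,N,(i)}(K_n,T)\otimes\Qp/\Zp$ as the exact annihilator of $H^1_N(K_n,T)^{(i)}$, which is the claim.
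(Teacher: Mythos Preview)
Your argument is correct, but the paper takes a shorter route. Rather than passing to $H^1(K_n,V)$ and computing $\Qp$-orthogonal complements, the paper works directly with the $\Zp$-valued Tate pairing: since $H^1_{f,N,(i)}(K_n,T)$ is by definition the exact annihilator of $H^1_N(K_n,T)^{(i)}$, one has
\[
0\rTo H^1_N(K_n,T)^{(i)}\rTo H^1(K_n,T)\rTo\Hom_{\Zp}\!\left(H^1_{f,N,(i)}(K_n,T),\Zp\right),
\]
and taking Pontryagin duals gives
\[
H^1_{f,N,(i)}(K_n,T)\otimes\Qp/\Zp\rTo H^1(K_n,\Ep)\rTo \left(H^1_N(K_n,T)^{(i)}\right)^\vee\rTo 0.
\]
Injectivity of the first arrow is exactly the saturation you isolate at the end, and the paper dispatches it in one line: $[px,y]=p[x,y]$ lies in the torsion-free group $\Zp$, so if $px$ annihilates $H^1_N(K_n,T)^{(i)}$ then so does $x$. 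Your detour through $H^1(K_n,V)$ and Proposition~\ref{describingortho} recovers the same thing but with more bookkeeping.

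One remark on your version: the assertion in your first step that for an \emph{arbitrary} submodule $W\subseteq H^1(K_n,T)$ the annihilator under the $\Qp/\Zp$-pairing equals the image of the $\Qp$-orthogonal complement $W^{\bot_{\langle,\rangle}}$ is not pure linear algebra over $\Qp$---it already requires $W$ to be saturated in $H^1(K_n,T)$ (try $W=pL$ in rank one). In the case at hand $W=H^1_N(K_n,T)^{(i)}$ is saturated, being the preimage of a $\Qp$-subspace under $\exp^*_{K_n}$, so this is harmless; but the saturation input enters earlier than you indicate, not only in the final step.
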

  \begin{proof}
   This essentially follows from \cite[proofs of Lemma~8.17 and Proposition~8.18]{kobayashi03}. By definition, we have an exact sequence
   \[
    0\rTo H_N^1(K_n,T)^{(i)} \rTo H^1(K_n,T)\rTo\Hom\left(H^1_{f,N,(i)}(K_n,T),\Zp\right).
   \]
   On taking Pontryagin duals, we obtain a second exact sequence
   \[
    H^1_{f,N,(i)}(K_n,T)\otimes\Qp/\Zp\rTo H^1(K_n,\Ep)\rTo H_N^1(K_n,T)^{(i),\vee}\rTo0.
   \]
   Therefore, it remains to show that the first map above is injective. But $[px,y]=p[x,y]$ for all $x,y\in H^1(K_n,T)$. This implies that if $x\in H^1(K_n,T)$ such that $px\in H^1_{f,N,(i)}(K_n,T)$, then $x\in H^1_{f,N,(i)}(K_n,T)$.
  \end{proof}

  \begin{corollary}\label{orthdual}
   Let $K$ be a finite extension of $\Qp$ and $n\ge N(K,V)$. For $i=1,2$, the exact annihilator of $H_N^1(K_n,T)^{(i)}$ under the Pontryagin duality
   is isomorphic to $\hat{E}_N^i(\calO_{K_{n}})\otimes\Qp/\Zp$ for $i=1,2$.
  \end{corollary}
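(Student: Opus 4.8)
The plan is to combine the identifications already established in the excerpt so that Corollary~\ref{orthdual} becomes an immediate composite of two previously-proven facts. First I would recall from Corollary~\ref{rewrite} that the Kummer map $\delta$ identifies the image of $H^1_{f,N,(i)}(K_n,T)$ inside $\hat{E}(\calO_{K_n})$ with the subgroup $\hat{E}_N^i(\calO_{K_n})$; tensoring this identification with $\Qp/\Zp$ over $\Zp$ (using that $\hat{E}(\calO_{K_n})$ is, after the Kummer map, a finitely generated $\Zp$-lattice inside $H^1_f(K_n,V)$ under Assumption~\ref{assumptionptorsion}, so that $\otimes\,\Qp/\Zp$ is exact on the relevant inclusion) yields a natural isomorphism
\[
 H^1_{f,N,(i)}(K_n,T)\otimes\Qp/\Zp\ \cong\ \hat{E}_N^i(\calO_{K_n})\otimes\Qp/\Zp.
\]

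Next I would invoke Lemma~\ref{duals}, which already says that the exact annihilator of $H_N^1(K_n,T)^{(i)}$ under the Pontryagin duality $[\sim,\sim]\colon H^1(K_n,T)\times H^1(K_n,\Ep)\to\Qp/\Zp$ is isomorphic to $H^1_{f,N,(i)}(K_n,T)\otimes\Qp/\Zp$. Splicing this with the isomorphism of the previous paragraph gives precisely the assertion of Corollary~\ref{orthdual}, namely that this exact annihilator is isomorphic to $\hat{E}_N^i(\calO_{K_n})\otimes\Qp/\Zp$ for $i=1,2$. So the proof is really just the chain
\[
 \big(H_N^1(K_n,T)^{(i)}\big)^{\perp}\ \cong\ H^1_{f,N,(i)}(K_n,T)\otimes\Qp/\Zp\ \cong\ \hat{E}_N^i(\calO_{K_n})\otimes\Qp/\Zp .
\]

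The only point requiring a sentence of care — and the one I would flag as the main obstacle, mild as it is — is checking that the identification of Corollary~\ref{rewrite} survives $-\otimes\Qp/\Zp$ as an \emph{isomorphism} rather than just a map: one needs that $\hat{E}_N^i(\calO_{K_n})$ is a direct summand, or at least a saturated (cotorsion-free quotient) $\Zp$-submodule, of $\hat{E}(\calO_{K_n})/\hat{E}(\calO_{K_n})_{\mathrm{tors}}$, which follows from its description in Corollary~\ref{rewrite} as $\{x : P_{N,i}^n\circ\log_{\hat E}(x)=0\}$, i.e. the kernel of a $\Zp$-linear map to a free $\Zp$-module, hence saturated; together with Assumption~\ref{assumptionptorsion} this makes the right-exact functor $-\otimes\Qp/\Zp$ preserve the inclusion. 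With that observed, the corollary follows formally, and I would present it as a one-line proof citing Corollary~\ref{rewrite} and Lemma~\ref{duals}.

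\begin{proof}
 By Corollary~\ref{rewrite}, the Kummer map identifies $H^1_{f,N,(i)}(K_n,T)$ with $\hat{E}_N^i(\calO_{K_n})$; since by Assumption~\ref{assumptionptorsion} and the description in Corollary~\ref{rewrite} the latter is a saturated $\Zp$-submodule of $\hat{E}(\calO_{K_n})$, tensoring with $\Qp/\Zp$ gives $H^1_{f,N,(i)}(K_n,T)\otimes\Qp/\Zp\cong\hat{E}_N^i(\calO_{K_n})\otimes\Qp/\Zp$. Combining this with Lemma~\ref{duals} yields the claim.
\end{proof}
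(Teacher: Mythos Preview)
Your proof is correct and follows exactly the paper's approach: the paper's proof is the single line ``This follows immediately from Corollary~\ref{rewrite} and Lemma~\ref{duals}.'' Your additional remark about saturation is harmless but unnecessary, since Corollary~\ref{rewrite} already gives an \emph{isomorphism} $H^1_{f,N,(i)}(K_n,T)\cong\hat{E}_N^i(\calO_{K_n})$ via the Kummer map (not merely a compatible inclusion), so tensoring with $\Qp/\Zp$ automatically preserves it.
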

  \begin{proof}
   This follows immediately from Corollary~\ref{rewrite} and Lemma~\ref{duals}.
  \end{proof}

  \begin{proposition}\label{samesel}
   The two definitions of signed Selmer groups coincide, namely, 
   \[
   \Sel^{(i)}_N(E/L_\infty)=\Sel^i(E/L_\infty)
   \]
   for $i=1,2$.
  \end{proposition}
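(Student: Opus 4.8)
The plan is to show that the two Selmer groups are defined using the \emph{same} local conditions at every prime above $p$, which immediately forces the global kernels to coincide. Both $\Sel^i(E/L_\infty)$ and $\Sel^{(i)}_N(E/L_\infty)$ are defined as kernels of restriction maps from $H^1(G_S(L_\infty),\Ep)$, and away from $p$ the local conditions are literally identical (the usual $H^1_f$). So it suffices to prove, for each prime $w$ of $L$ above $p$ and writing $K=L_w$, that the local quotient used in one definition agrees with the one used in the other, compatibly with the maps from $H^1(G_S(L_\infty),\Ep)$; equivalently, that the orthogonal complements (``$H^1_{f,i}$'') agree at each finite level $n\gg 0$ and then pass to the limit.

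First I would fix $K=L_w$ and an integer $N\ge N(K,V)$, and recall the chain of identifications already set up in the excerpt. By Corollary~\ref{sameinv} we have $H^1_{\Iw}(K,T)^i=\varprojlim_n H^1_N(K_n,T)^{(i)}$, and by Corollary~\ref{orthdual} the exact annihilator of $H^1_N(K_n,T)^{(i)}$ under the Pontryagin pairing is isomorphic to $\hat{E}^i_N(\calO_{K_n})\otimes\Qp/\Zp$. On the other hand, by definition $H^1_{f,i}(K_n,\Ep)$ is the exact annihilator of $H^1(K_n,T)^i$, which is the image of $H^1_{\Iw}(K,T)^i$ under the projection $H^1_{\Iw}(K,T)\to H^1(K_n,T)$. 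The key compatibility to check is that, for $n\ge N+1$, the image of $\varprojlim_m H^1_N(K_m,T)^{(i)}$ in $H^1(K_n,T)$ equals $H^1_N(K_n,T)^{(i)}$ — i.e. that the projective system $\{H^1_N(K_m,T)^{(i)}\}_m$ has surjective transition maps onto its finite levels. This should follow from Corollary~\ref{trace} (the conditions at level $m$ only constrain the dual exponential ``up to level $m$'', so they are compatible under corestriction) together with the fact that $H^1_{\Iw}(K,T)\to H^1(K_n,T)$ is surjective, which in turn follows from the Iwasawa-cohomology description $H^1_{\Iw}(K,T)\cong\DD_K(T)^{\psi=1}$ and the vanishing of $H^2_{\Iw}$ in this setting. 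Granting this, $H^1(K_n,T)^i=H^1_N(K_n,T)^{(i)}$, hence their exact annihilators agree, so $H^1_{f,i}(K_n,\Ep)\cong\hat{E}^i_N(\calO_{K_n})\otimes\Qp/\Zp$ as subgroups of $H^1(K_n,\Ep)$.

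With the finite-level identification in hand I would take the direct limit over $n$: $J^i_w(L_\infty)=\varinjlim_n H^1(K_n,\Ep)/H^1_{f,i}(K_n,\Ep)$ is then identified with $\varinjlim_n H^1(K_n,\Ep)/\big(\hat{E}^i_N(\calO_{K_n})\otimes\Qp/\Zp\big)$, which is exactly the local term appearing in the definition of $\Sel^{(i)}_N(E/L_\infty)$. Since these identifications are all induced by the natural maps and hence commute with the localisation maps out of $H^1(G_S(L_\infty),\Ep)$, the two global kernels coincide, and the same argument at each of the finitely many primes $w\mid p$ finishes the proof. (One should also remark that the resulting group is independent of the choice of $N$: enlarging $N$ only changes $\hat{E}^i_N$ by finitely many ``jumping conditions'' at small levels, which disappear in the direct limit — this is implicit in Kobayashi's original argument and can be cited from \cite{kobayashi03}.)

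The main obstacle I expect is precisely the surjectivity/compatibility claim in the middle paragraph: verifying that the finite-level ``jumping-condition'' groups $H^1_N(K_n,T)^{(i)}$ are exactly the images of the Iwasawa-theoretic groups $H^1_{\Iw}(K,T)^i$, rather than merely containing or being contained in them. This requires combining the explicit description via $\exp^*$ and trace maps from Corollary~\ref{trace} with a control-type surjectivity statement for $H^1_{\Iw}(K,T)\to H^1(K_n,T)$, and it is here that Assumption~\ref{assumptionptorsion} (no $p$-torsion in $E(K_\infty)$) and the torsion-freeness observations used in Lemma~\ref{duals} do the real work; the rest is bookkeeping with direct limits and Pontryagin duality.
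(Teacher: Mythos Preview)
Your overall architecture---reduce to the local statement at each prime above $p$, then invoke Corollary~\ref{sameinv} and Corollary~\ref{orthdual}---is exactly what the paper does. The difference is in how you pass between the two sets of local conditions, and this is where you introduce an unnecessary obstacle.

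You try to prove the \emph{finite-level} equality $H^1(K_n,T)^i = H^1_N(K_n,T)^{(i)}$ for each $n\ge N+1$, and you correctly identify the problematic step: showing that the image of $H^1_{\Iw}(K,T)^i$ in $H^1(K_n,T)$ is all of $H^1_N(K_n,T)^{(i)}$. Your proposed justification (surjectivity of $H^1_{\Iw}(K,T)\to H^1(K_n,T)$ plus the trace description) does not suffice: surjectivity of the full Iwasawa cohomology says nothing about whether the \emph{submodule} $H^1_{\Iw}(K,T)^i$ surjects onto the finite-level subgroup. Over $\QQ_p$ Kobayashi has explicit generators $c_n^\pm$ that make this visible, but over a general $K$ no such description is available, and this finite-level surjectivity is neither proved in the paper nor obviously true.

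The paper sidesteps this entirely. Instead of comparing at each finite level and then taking the direct limit, it takes the direct limit first and then dualizes: one must show
\[
\varprojlim_n H^1(K_n,T)^i \;=\; \varprojlim_n H^1_N(K_n,T)^{(i)}.
\]
The right-hand side is $H^1_{\Iw}(K,T)^i$ by Corollary~\ref{sameinv}. The left-hand side is also $H^1_{\Iw}(K,T)^i$: since $H^1(K_n,T)^i$ is by definition the image of the closed subgroup $H^1_{\Iw}(K,T)^i\subset\varprojlim H^1(K_n,T)$, the inverse limit of these images recovers that closed subgroup. No surjectivity onto the groups $H^1_N(K_n,T)^{(i)}$ is ever needed. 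So the ``main obstacle'' you flag is not a genuine obstacle but an artefact of comparing at finite level; reorder the argument (limit first, then compare) and it disappears.
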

  \begin{proof}
   It suffices to show that for any finite extensions $K$ of $\Qp$, we have
   \[
    \varinjlim_n\frac{H^1(K_{n},\Ep)}{H^1_{f,i}(K_{n},\Ep)}\cong\varinjlim_{n\ge N+1}\frac{H^1(K_{n},\Ep)}{\hat{E}_N^i(\calO_{K_{n}})\otimes\Qp/\Zp}
   \]
   where $N\ge N(K,V)$. On taking Pontryagin duals, this is equivalent to showing
   \[
    \varprojlim H^1(K_n,T)^{i}\cong\varprojlim H^1_N(K_n,T)^{(i)}
   \]
   by Corollary~\ref{orthdual}. Therefore, we are done by Corollary~\ref{sameinv}.
  \end{proof}
  
%++++++++++++++++++++++++++++++++++++++++++++++++++++++++++++++++++++++++++++++++++++++++++++++++++++++++++++++++++++++++++++++++++++++++++++
%++++++++++++++++++++++++++++++++++++++++++++++++++++++++++++++++++++++++++++++++++++++++++++++++++++++++++++++++++++++++++++++++++++++++++++

\section{The supersingular $\mathfrak{M}_H(G)$-conjecture}\label{MHG}

  Throughout this section, we assume that $E$ is an elliptic curve over $\QQ$ with good supersingular reduction at a prime $p\geq 3$ and $a_p=0$. Let $L_\infty$ be a $p$-adic Lie extension of $\QQ$ containing $\QQ(\mu_{p^\infty})$, so $G=\Gal(L_\infty\slash\QQ)$ is a compact $p$-adic Lie group of finite rank. Let $H=\Gal(L_\infty\slash \QQ(\mu_{p^\infty}))$.  Choose a sequence of finite extensions $L_m$ of $\QQ$ such that $L_\infty=\varinjlim L_m$ and $L^{(m)}_\infty=L_m(\mu_{p^\infty})$ is Galois over $\QQ$ for all $m\geq 0$. Recall that for $i=1,2$, we have defined $\Sel^i(E/L^{(m)}_\infty)$ in Section~\ref{supersingular}. This allows to make the following definition. 
  
  \begin{definition}
   For $i=1,2$, we define $\Sel^i(E/L_\infty):=\varinjlim_m\Sel^i(E/L^{(m)}_\infty)$ for $=1,2$ and write
   \[ X_i(E\slash L_\infty)=\Hom_{\mathrm{cts}}\left(\Sel^i(E\slash L_\infty),\QQ_p\slash\ZZ_p\right). \]
  \end{definition}
  
  \begin{definition}
   Denote by $\mathfrak{M}_H(G)$ the category of finitely generated $\Lambda(G)$-modules $M$ for which $M\slash M(p)$ is finitely generated over $\Lambda(H)$. Here $M(p)$ denotes the $p$-torsion part of $M$. 
  \end{definition}
 
  The $\MHG$-conjecture in \cite{cfksv} states that the Pontryagin dual of the Selmer group of $E$ over $L_\infty$ is an element of $\MHG$ if $E$ has good ordinary reduction at $p$. We therefore analogously propose the following conjecture.
  
  \begin{conjecture}\label{con}
   Let $E$ be an elliptic curve over $\QQ$ with good supersingular reduction at $p$ and $a_p=0$. Let $L_\infty$ be a $p$-adic Lie extension of $\QQ$ containing $\QQ(\mu_{p^\infty})$. Define the Galois groups $G=\Gal(L_\infty\slash\QQ)$ and $H=\Gal(L_\infty\slash \QQ(\mu_p))$. Then $X_i(E\slash L_\infty)\in\mathfrak{M}_H(G)$ for $i=1,2$.
  \end{conjecture}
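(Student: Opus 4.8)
The plan is to reduce Conjecture~\ref{con} to the analogous statement for the ordinary Selmer group, for which the $\mathfrak{M}_H(G)$-property is known under the standard hypotheses (namely that the ordinary dual Selmer group over $\QQ(\mu_{p^\infty})$ is $\Lambda(\Gamma)$-torsion and that $H$ has no $p$-torsion), by comparing the signed Selmer group with an ordinary one via a control-type argument over the cyclotomic direction. Concretely, the key structural input should be that $X_i(E/L_\infty)$ is finitely generated over $\Lambda(G)$ and that $X_i(E/L_\infty)/X_i(E/L_\infty)(p)$ is finitely generated over $\Lambda(H)$; the latter is the content of membership in $\mathfrak{M}_H(G)$, so the whole argument hinges on controlling the $\Lambda(H)$-rank of the cokernel of multiplication by $p$.

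First I would establish the base case $L_\infty = \QQ(\mu_{p^\infty})$, i.e. that $X_i(E/\QQ(\mu_{p^\infty}))$ is $\Lambda(\Gamma)$-torsion: this is Kobayashi's theorem for the plus/minus Selmer groups (using that the alternative description via the jumping conditions $\hat E_N^i$ in Corollary~\ref{rewrite} agrees with Kobayashi's local conditions, which Proposition~\ref{samesel} gives), together with the nonvanishing of the $p$-adic $L$-functions $L_p^\pm(E)$. Then, following the strategy of \cite{cfksv} in the ordinary case, I would run Nekov\'a\v{r}'s or Hachimori--Venjakov\'a--Zerbes-style descent: the key is a control theorem comparing $\Sel^i(E/L_\infty)$ with $\Sel^i(E/\QQ(\mu_{p^\infty}))$, showing that the kernel and cokernel of the restriction map $\Sel^i(E/\QQ(\mu_{p^\infty})) \to \Sel^i(E/L_\infty)^H$ are cofinitely generated over $\ZZ_p$. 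The local terms $J^i_v(L_\infty)$ must be analysed: away from $p$ these are the usual ones, and at $p$ one uses that $H^1_{f,i}(L_{w_n,n},\Ep)$ is built from the overconvergent description, so one needs a local control statement for the groups $\hat E_N^i(\calO_{L_{\infty,\omega}})$ under taking $H$-(co)invariants. This is where the Berger comparison isomorphism and the explicit jumping conditions of Section~\ref{alter} do the real work.

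Granting the control theorem, the deduction is formal: $X_i(E/\QQ(\mu_{p^\infty}))$ being $\Lambda(\Gamma)$-torsion and finitely generated implies, by Nakayama applied over $\Lambda(G)$, that $X_i(E/L_\infty)$ is finitely generated over $\Lambda(G)$ and indeed that $X_i(E/L_\infty)/X_i(E/L_\infty)(p)$ is finitely generated over $\Lambda(H)$ --- one compares the $p$-torsion-free quotient with the dual of $\varinjlim_m \Sel^i(E/L^{(m)}_\infty)[p^\infty]^\vee$ and uses that the cyclotomic specialisation is torsion. The formal structure here is identical to \cite[\S 2--3]{cfksv}: $\mathfrak{M}_H(G)$-membership is equivalent to $X/X(p)$ being finitely generated over $\Lambda(H)$, and the latter follows once one knows the weak Leopoldt-type vanishing $H^2(G_S(L_\infty),\Ep) = 0$ and the torsionness over the cyclotomic line.

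The main obstacle I expect is the control theorem at the prime $p$: unlike the ordinary case, where the local condition is the image of a nicely behaved formal-group module, here $H^1_{f,i}(L_{w_n,n},\Ep)$ is defined through the orthogonal complement of the $(\vp,\Gamma)$-module-theoretic subspace $H^1_{\Iw}(L_\nu,T)^i = \{x \in \DD^\dagger_{L_\nu}(T)^{\psi=1} : \vp(x_i) = -p\psi(x_i)\}$, and it is not obvious that this subspace behaves well under restriction along $H$ or that the corank of the relevant local cohomology is as expected. One delicate point, flagged already in Section~\ref{cyclotomicshortexact} of the paper, is that the natural cyclotomic short exact sequence relating $\Sel^i$ over $L_\infty$ and over $L_\infty^{\mathrm{cyc}}$ need not be exact, so the usual snake-lemma bookkeeping fails and one may only obtain the result conditionally, or under an extra hypothesis such as $\Sel^i(E/\QQ(\mu_{p^\infty}))$ being cotorsion together with a surjectivity assumption on the global-to-local map. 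I would therefore phrase the final result as: \emph{assume the $\pm$ Main Conjecture-type torsionness over $\QQ(\mu_{p^\infty})$ and that $H$ has no $p$-torsion; then $X_i(E/L_\infty) \in \mathfrak{M}_H(G)$}, with the honest caveat that removing the conditionality requires resolving the non-exactness issue.
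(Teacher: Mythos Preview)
The statement you are trying to prove is labelled a \emph{conjecture} in the paper and is \emph{not} proved there; there is no ``paper's own proof'' to compare against. Section~\ref{cyclotomicshortexact} of the paper (entitled \emph{Difficulties}) sketches precisely the strategy you outline---the fundamental diagram, the snake lemma, Kobayashi's cotorsion theorem over $\QQ(\mu_{p^\infty})$, and the reduction to a control statement for the restriction map $\Sel^i(E/\QQ_\infty)\to\Sel^i(E/L_\infty)^H$ (this is the paper's Conjecture~\ref{support})---and then explains why the authors could not carry it through. The obstruction is exactly the local step at primes above $p$: the paper reduces it further to showing that $H^1\big(\mathcal{H},\hat{E}^i_N(\calO_{K_\infty})\big)$ is a cofinitely generated $\Zp$-module (Conjecture~\ref{cofcoh}), and notes that the analogue of Kobayashi's short exact sequence relating $\hat{E}^1_N$, $\hat{E}^2_N$ and $\hat{E}$ does not obviously go through over $K_\infty$.

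Your proposal therefore does not constitute a proof: the phrase ``Granting the control theorem, the deduction is formal'' is precisely where the entire content lies, and you have not supplied any argument for the local control at $p$ beyond acknowledging in your final paragraph that it is the hard part. In effect you have rediscovered the paper's own heuristic and its own obstruction. Two further points: (i) even with the control theorem in hand, the paper observes that one also needs the vanishing of the cyclotomic $\mu$-invariant of $X_i(E/\QQ_\infty)$ (another open conjecture, from \cite[\S 10]{kobayashi03}) before Nakayama yields finite generation over $\Lambda(H)$---your sketch elides this; (ii) your appeal to ``Hachimori--Venjakob--Zerbes-style descent'' and ``weak Leopoldt-type vanishing $H^2(G_S(L_\infty),\Ep)=0$'' does not sidestep the issue, since those arguments in the ordinary case rely on the local condition at $p$ being the image of a universal norm submodule with well-understood Galois cohomology, which is exactly what is missing here for $\hat{E}^i_N$.
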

  
%+++++++++++++++++++++++++++++++++++++++++++++++++++++++++++++++++++++++++++++++++++++++++++++++++++++++++++++++++++++++++++++++++++++++++++++++  
%+++++++++++++++++++++++++++++++++++++++++++++++++++++++++++++++++++++++++++++++++++++++++++++++++++++++++++++++++++++++++++++++++++++++++++++++  

 \section{Difficulties}\label{cyclotomicshortexact}
 
  To simplify the notation, let $\QQ_\infty=\QQ(\mu_{p^\infty})$. In order to support Conjecture~\ref{con}, we tried to prove the following result:
      
  \begin{conjecture}\label{support}
   Let $E$ be an elliptic curve over $\QQ$ with good supersingular reduction at $p$ and $a_p=0$. Let $L_\infty$ be a $p$-adic Lie extension of $\QQ$ containing $\QQ(\mu_{p^\infty})$, and let $H=\Gal(L_\infty\slash \QQ(\mu_{p^\infty}))$. Assume also that $E(L_{v,\infty})$ has no $p$-torsion for any prime $v$ of $L$ above $p$.  Then for $i=1,2$, the kernel and cokernel of the restriction map 
   \[
   \Sel^i(E\slash \QQ_\infty)\rTo \Sel^i(E\slash L_\infty)^H
   \]
   are cofinitely generated $\ZZ_p$-modules. 
  \end{conjecture}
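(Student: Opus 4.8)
The plan is to mimic the standard control-theorem arguments (\`a la Mazur, Greenberg, and especially Coates--Fukaya--Kato--Sujatha--Venjakob) for ordinary Selmer groups, adapted to the signed local conditions. Consider the commutative diagram with exact rows
\[
\begin{diagram}
0 & \rTo & \Sel^i(E/\QQ_\infty) & \rTo & H^1(G_S(\QQ_\infty),\Ep) & \rTo & \bigoplus_{v\in S}J^i_v(\QQ_\infty)\\
  &      & \dTo^{h_S}           &      & \dTo^{g_S}                &      & \dTo^{\bigoplus \ell_v}\\
0 & \rTo & \Sel^i(E/L_\infty)^H & \rTo & H^1(G_S(L_\infty),\Ep)^H & \rTo & \left(\bigoplus_{v\in S}J^i_v(L_\infty)\right)^H
\end{diagram}
\]
obtained by taking $H$-invariants of the defining sequence for $\Sel^i(E/L_\infty)$ (which makes sense because the local conditions $J^i_v$ were defined Galois-equivariantly). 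By the snake lemma it suffices to control $\ker(g_S)$, $\coker(g_S)$ and $\ker(\bigoplus\ell_v)$; in fact $\ker(h_S)\hookrightarrow\ker(g_S)$ and there is an exact sequence relating $\coker(h_S)$ to $\coker(g_S)$ and $\ker(\bigoplus\ell_v)$.

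First I would handle the global cohomology terms. The inflation--restriction sequence gives $\ker(g_S)=H^1(H,\Ep^{G_S(L_\infty)})$ and an injection of $\coker(g_S)$ into $H^2(H,\Ep^{G_S(L_\infty)})$. Since $H$ is a compact $p$-adic Lie group of dimension $d=\dim G-1$, it has finite cohomological dimension and $\Ep^{G_S(L_\infty)}$ is a cofinitely generated $\Zp$-module (indeed it injects into $\Ep$, which is cofinitely generated of corank $2$), so both $H^1(H,-)$ and $H^2(H,-)$ are cofinitely generated $\Zp$-modules by the standard finiteness of $p$-adic Lie group cohomology with cofinitely generated coefficients (cf. the arguments in \cite{cfksv}). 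Next I would treat the local terms: for $v\nmid p$ the condition is the usual $H^1_f$, and the analysis is identical to the ordinary case (the kernel of the local restriction map at such $v$ is controlled by $H^1$ of the decomposition group in $H$ acting on a cofinitely generated module, hence cofinitely generated over $\Zp$, and vanishes entirely for all but finitely many layers once one passes to the limit). For $v=p$ one must show that the kernel of
\[
J^i_p(\QQ_\infty)=\varinjlim_n \frac{H^1(\QQ_{p,n},\Ep)}{H^1_{f,i}(\QQ_{p,n},\Ep)}\longrightarrow \left(\varinjlim_m\bigoplus_{w\mid p}\frac{H^1(L^{(m)}_{w,\infty},\Ep)}{H^1_{f,i}(L^{(m)}_{w,\infty},\Ep)}\right)^H
\]
is $\Zp$-cofinitely generated. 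Here I would use the explicit "jumping condition'' description of $H^1_{f,i}$ from Section~\ref{alter} (Corollary~\ref{rewrite}, i.e. the groups $\hat{E}^i_N(\calO_{K_n})$), together with the hypothesis that $E(L_{v,\infty})$ has no $p$-torsion, to identify the local signed quotients on both sides with subquotients of $H^1$ of formal-group points that are compatible with corestriction; the difference between the base and the top is then again governed by $H^i$ of the local Galois group $H_v=\Gal(L_{v,\infty}/\QQ_{p,\infty})$ acting on cofinitely generated coefficients, hence cofinitely generated over $\Zp$.

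The main obstacle, and the reason Conjecture~\ref{support} is stated as a conjecture rather than a theorem, will be the local term at $p$: unlike the ordinary case, the signed local condition $H^1_{f,i}$ is \emph{not} the propagation of a nice Galois-stable submodule (a "Panchishkin subrepresentation'') of $V$, so one cannot directly write $J^i_p$ as $H^1$ of a quotient representation and invoke the standard propagation-of-Selmer-conditions lemmas. In particular, it is not clear that $H^1_{f,i}(L^{(m)}_{w,\infty},\Ep)$ is the exact image/restriction of $H^1_{f,i}(\QQ_{p,\infty},\Ep)$, because the change-of-basis from $n^\pm$ to $v^\pm$ and the overconvergent-series manipulations behind Proposition~\ref{des1} do not obviously commute with passing from $\QQ_p$ to a ramified extension in a way that respects the jumping conditions. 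I would try to circumvent this by working at finite level with the description $H^1_{f,N,(i)}(K_n,T)=H^1(K_n,T)\cap\exp_{K_n}\big(\bigoplus_{m\in S^n_{N,i}}K_m^{(0)}\otimes\DD_{\cris}(V)\big)$ and comparing $\DD_{\cris}$ over $\QQ_p$ with $K_m^{(0)}\otimes\DD_{\cris}$, but the compatibility of the projectors $P^n_{N,i}$ under norm maps between ramified layers is exactly where the argument stalls, which matches the "difficulties'' the authors announce they will explain.
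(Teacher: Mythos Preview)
Your approach coincides with the paper's: the authors set up exactly this fundamental diagram, apply the snake lemma, and dispose of $\ker(\beta)$, $\coker(\beta)$ and the $\gamma_v$ for $v\nmid p$ by inflation--restriction, isolating the local map at $p$ as the sole obstruction. Note that the paper does \emph{not} prove the statement either --- it is stated as a conjecture, and Section~\ref{cyclotomicshortexact} is devoted precisely to explaining why the argument stalls at $v\mid p$, just as you anticipate.

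Two refinements in the paper are worth incorporating. First, the paper explicitly establishes (Corollary~\ref{surjectivity}, via Kobayashi's results and a Poitou--Tate analysis) that the bottom row of the diagram is right-exact, i.e.\ that $H^1(G_S(\QQ_\infty),\Ep)\to\bigoplus_{v\in S}J^i_v(\QQ_\infty)$ is surjective; this is what makes the snake-lemma connecting map $\ker(\gamma)\to\coker(\alpha)$ available, and you omit it. Second, the paper pushes the reduction at $p$ one step further than your projector-compatibility formulation: writing $\mathcal{H}$ for the local Galois group at a place above $p$, a second snake-lemma argument together with the short exact sequence $0\to\hat{E}^i_N(\calO_{K_\infty})\to\hat{E}^i_N(\calO_{K_\infty})\otimes\Qp\to\hat{E}^i_N(\calO_{K_\infty})\otimes\Qp/\Zp\to 0$ reduces Conjecture~\ref{support} to the single assertion that $H^1\big(\mathcal{H},\hat{E}^i_N(\calO_{K_\infty})\big)$ is cofinitely generated over $\Zp$ (Conjecture~\ref{cofcoh}). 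This is a cleaner target than compatibility of the $P^n_{N,i}$, and it makes the gap with the ordinary case transparent: Coates--Greenberg gives $H^1\big(\mathcal{H},\hat{E}(\calO_{K_\infty})\big)=0$, but the signed subgroup $\hat{E}^i_N$ does not obviously inherit this vanishing.
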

  
  Recall that $\Sel^i(E\slash \QQ_\infty)$ is $\Lambda(\Gamma)$-cotorsion (\cite[Theorem~1.2]{kobayashi03}). Assume $H$ is pro-$p$, and that Conjecture~\ref{support} holds. If $\Sel^i(E\slash\QQ_\infty)$ is a cofinitely generated $\ZZ_p$-module, which is equivalent to the vanishing of the $\mu$-invariant of $X_i(E/\slash\QQ_\infty)$ as conjectured in \cite[\S10]{kobayashi03}, then we can apply Nakayama's lemma (c.f. for example \cite[Theorem 2.6]{coateshowson01}) to deduce that $X_i(E\slash L_\infty)$ is finitely generated over $\Lambda(H)$.
  
  In this section, we will explain some of the difficulties that we encountered when trying to prove Conjecture~\ref{support} when $L_\infty$ is a finite extension of $\QQ_\infty$. We first establish a preliminary result (Corollary~\ref{surjectivity}), which allows us to study a fundamental diagram (see the beginning of Section~\ref{fundamental}) analogous to the ordinary case. 
  
%++++++++++++++++++++++++++++++++++++++++++++++++++++++++++++++++++++++++++++++++++++++++++++++++++  
   
 \subsection{Analysis of Poitou-Tate exact sequences}
  
  Write $S_f$ for the set of finite places of $S$ and let $I_v^i$ be as defined in Section~\ref{supersingular}. By \cite[\S A.3]{perrinriou95}, there are two exact sequences
  \begin{align}
   &0\rightarrow\Sel^i(E/\QQ(\mu_{p^n}))\rTo H^1(G_S(\QQ(\mu_{p^n})),\Ep)\rTo\bigoplus_{v\in S_f}J_v^i(\QQ(\mu_{p^n}))\rTo H^1_i(\QQ,T)^\vee\rTo\cdots\label{firstpt}\\
   &0\rightarrow H^1_i(\QQ(\mu_{p^n}),T)\rTo^{f_n}H^1(G_S(\QQ(\mu_{p^n})),\Ep)\rTo^{g_n}\bigoplus_{v\in S_f}I_v^i(\QQ(\mu_{p^n}))\rTo\cdots\label{secondpt}
  \end{align}
  where $H^1_i(\QQ(\mu_{p^n}),T)$ is defined by
  \[
   \ker\Big(H^1(\QQ(\mu_{p^n}),T)\rTo\prod_{v\in S_f}I^i_v(\QQ(\mu_{p^n}))\Big)
  \]
  and $M^\vee$ denotes the Pontryagin dual of $M$.
  
  \begin{lemma}
   The natural map
   \[
   H^1\big(G_S(\QQ_\infty),\Ep\big)\rTo\bigoplus_{v\in S_f} J_v^i(\QQ_\infty)
   \]
   is surjective.
  \end{lemma}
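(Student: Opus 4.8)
The plan is to deduce surjectivity from the vanishing of a suitable global $H^2$, exactly as in the ordinary case. First I would recall that the cokernel of the localisation map
\[
H^1\big(G_S(\QQ_\infty),\Ep\big)\rTo\bigoplus_{v\in S_f}\frac{H^1(\QQ_{\infty,v},\Ep)}{H^1_{f,i}(\QQ_{\infty,v},\Ep)}
\]
is controlled, via the Poitou--Tate sequence, by the dual Selmer group $H^1_i(\QQ,T)$ (the strict/compactified Selmer group of $T$ with the dual local conditions), together with $H^2(G_S(\QQ_\infty),\Ep)$. So it suffices to show these two terms vanish after passing to the limit over $n$.

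Concretely, I would take the direct limit over $n$ of the exact sequence \eqref{firstpt}; since $\varinjlim$ is exact this gives
\[
\cdots\rTo\bigoplus_{v\in S_f}J_v^i(\QQ_\infty)\rTo \big(\varprojlim_n H^1_i(\QQ(\mu_{p^n}),T)\big)^\vee\rTo\cdots,
\]
so the obstruction to surjectivity of $H^1(G_S(\QQ_\infty),\Ep)\to\bigoplus_v J_v^i(\QQ_\infty)$ is the Pontryagin dual of $\varprojlim_n H^1_i(\QQ(\mu_{p^n}),T)$, i.e. an Iwasawa-theoretic strict Selmer group of $T$. The key point is that this compact dual Selmer group is $\Lambda(\Gamma)$-torsion (indeed it is the source of Kobayashi's argument that $\Sel^i(E/\QQ_\infty)$ is cotorsion, \cite[Theorem~1.2]{kobayashi03}), and in fact it vanishes: the local condition $H^1_i(\QQ_{\infty,p},T)$ defining it, namely $H^1_{\Iw}(\Qp,T)^i=H^1_{\Iw}(\Qp,T)\cap\ker(\underline{\Col}_i)$ by Remark~\ref{relationtoColeman}, is chosen so that the corresponding global strict Selmer module has no $\Lambda(\Gamma)$-torsion submodule and generic rank zero, hence is zero. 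Thus its dual is zero and the map is surjective at the $\QQ_\infty$-level. One also needs $\varprojlim^1$ of the finite-level strict Selmer groups to vanish, which follows from the Mittag--Leffler condition since these are finitely generated $\Zp$-modules with surjective (or at least compatible, eventually stabilising) transition maps.

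The step I expect to be the main obstacle is verifying that the relevant Iwasawa cohomology group with the signed local condition genuinely vanishes rather than merely being torsion. This is where one must invoke the precise properties of the Coleman maps $\underline{\Col}_i$ from \cite{leiloefflerzerbes10}: one shows that $H^1_{\Iw}(\Qp,T)/H^1_{\Iw}(\Qp,T)^i\cong\image(\underline{\Col}_i)$ has no nonzero finite submodule and that the global-to-local map for $T$ is injective on the torsion part, using that $H^0(G_S(\QQ_\infty),E_{p^\infty})=0$ under the running hypotheses (which forces $H^2(G_S(\QQ_\infty),\Ep)$ considerations to be benign via Tate's global Euler characteristic formula and Jannsen's spectral sequence). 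Assembling these inputs — exactness of $\varinjlim$, the structure of $\image(\underline{\Col}_i)$, and the vanishing of the compact strict Selmer group — yields the claimed surjectivity; I would present it as a short corollary of \eqref{firstpt} once the vanishing of $\varprojlim_n H^1_i(\QQ(\mu_{p^n}),T)$ is in hand.
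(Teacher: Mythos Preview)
Your overall strategy matches the paper's: both reduce surjectivity to the vanishing of $\varprojlim_n H^1_i(\QQ(\mu_{p^n}),T)$, then feed this into the direct limit of \eqref{firstpt}. The difference is in how that vanishing is established.

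The paper does not argue abstractly via ``torsion-free plus generic rank zero''. Instead it takes the inverse limit of the \emph{second} Poitou--Tate sequence \eqref{secondpt}, identifies $\varprojlim_n\bigoplus_v I_v^i$ with $H^1_{\Iw}(\Qp,T)/H^1_{\Iw}(\Qp,T)^i$, and then invokes \cite[Theorem~7.3]{kobayashi03}, which says precisely that the limit $\varprojlim_n g_n$ is injective. Exactness of \eqref{secondpt} then forces $\varprojlim_n H^1_i(\QQ(\mu_{p^n}),T)=0$ immediately. No $\varprojlim^1$ discussion, no Euler-characteristic or $H^2$ analysis, and no structure theory of $\image(\underline{\Col}_i)$ is needed.

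Your route would work in principle, but the step you flag as the ``main obstacle''---that the compact strict Selmer group has no nonzero $\Lambda(\Gamma)$-torsion submodule---is exactly the content of Kobayashi's Theorem~7.3, and you have not supplied an independent argument for it. Saying the local condition ``is chosen so that'' this holds is circular: that choice is justified \emph{a posteriori} by Kobayashi's explicit computation with the plus/minus norm subgroups. So the cleanest fix to your write-up is simply to replace the structural paragraph with the two-line argument via \eqref{secondpt} and the citation to \cite[Theorem~7.3]{kobayashi03}, which is what the paper does.
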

  \begin{proof}
   On taking inverse limit, we have
   \[
    \varprojlim_n\Big( \bigoplus_{v\in S_f}I_v^i(\QQ(\mu_{p^n}))\Big) = \frac{H^1_{\Iw}(\Qp,T)}{H^1_{\Iw}(\Qp,T)^i}
   \]
   and $\displaystyle\varprojlim_n g_n$ is injective by \cite[Theorem~7.3]{kobayashi03}. Therefore, on taking inverse limit in \eqref{secondpt}, we have 
   \[\varprojlim_n H^1_i(\QQ(\mu_{p^n}),T)=0,\]
   which implies that $\displaystyle\varinjlim_n H^1_i(\QQ(\mu_{p^n}),T)^\vee=0$. Therefore, on taking direct limit in \eqref{firstpt}, we have an exact sequence
   \[
    0\rTo\Sel^i(E/\QQ_\infty)\rTo H^1\big(G_S(\QQ_\infty),\Ep\big)\rTo\bigoplus_{v\in S_f} J_v^i(\QQ_\infty)\rTo 0
   \]
   and we are done.
  \end{proof}

  \begin{corollary}\label{surjectivity}
   The natural map
   \[
   H^1\big(G_S(\QQ_\infty),\Ep\big)\rTo\bigoplus_{v\in S} J_v^i(\QQ_\infty)
   \]
   is surjective.
  \end{corollary}
  \begin{proof}
   This follows as $J_v^i(\QQ_\infty)=0$ for $p\neq2$ if $v$ is an infinite prime.
  \end{proof}  
  
%+++++++++++++++++++++++++++++++++++++++++++++++++++++++++++++++++++++++++++++++++++++++++++++++++++++++++++++++++++++++++++++++++   

  \subsection{The fundamental diagram}\label{fundamental}
   
   We attempted to prove Conjecture~\ref{support} by studying the following commutative diagram, which we call {\it the fundamental diagram}.
   
   \begin{diagram}
    0 & \rTo & \Sel^i(E\slash L_\infty)^{H} & \rTo & H^1(G_S(L_\infty),\Ep)^{H} & \rTo & \bigoplus_{v\in S}J_v^i(L_\infty)^{H} & &\\   
      &      & \uTo^{\alpha}      &      &   \uTo^{\beta}              &      & \uTo^{\gamma=(\gamma_v)} & & \\
    0 & \rTo & \Sel^i(E\slash \QQ_\infty) & \rTo & H^1(G_S(\QQ_\infty,\Ep) & \rTo & \bigoplus_{v\in S} J_v^i(\QQ_\infty) & \rTo & 0    
   \end{diagram}
   where the $J_v^i$ are as defined in Section~\ref{supersingular}. Applying the snake lemma gives a long exact sequence
  \[ 0 \rTo \ker(\alpha) \rTo \ker(\beta) \rTo \ker(\gamma) \rTo \coker(\alpha)\rTo \coker(\beta).\]
  In order to prove Conjecture~\ref{support}, it is therefore sufficient to show that the kernel and cokernel of the map $\beta$ and the kernel of $\gamma$ are cofinitely generated $\ZZ_p$-modules. The results for $\beta$ and for $\gamma_v$, $v\nmid p$, are easy consequences of the inflation-restriction exact sequences (c.f. \cite{coatessujatha00}). 

%  \begin{lemma}
%   We have a short exact sequence
%   \[ 0\rTo \hat{E}(\QQ_p)\rTo \hat{E}_{N,1}(\QQ_{p,\infty})\oplus \hat{E}_{N,2}(\QQ_{p,\infty})\rTo \hat{E}_N(\QQ_{p,\infty})\rTo 0,\]
%   where the first map is the diagonal embedding. 
%  \end{lemma}
  
  The main difficulty is the study of the kernel of the local restriction map $\gamma_v$ when $v\mid p$. Let $K$ be the completion of $L$ at such a prime, and write $\mathcal{H}$ for the Galois group $\Gal(K_\infty\slash \QQ_{p,\infty})$. In order to prove Conjecture~\ref{support}, we may use the local conditions from Section~\ref{alter} and attempt to show that the kernel of the map
  \[
   \frac{H^1(\QQ_{p,\infty},\Ep)}{\hat{E}^i_N(\calO_{\QQ_{p,\infty}})\otimes\Qp\slash\Zp}\rTo\left(\frac{H^1(K_\infty,\Ep)}{\hat{E}^i_N(\calO_{K_\infty})\otimes\Qp\slash\Zp}\right)^\mathcal{H}
  \]
  is a cofinitely generated $\Zp$-module. Consider the following commutative diagram: 
  \begin{diagram}
   0&\rTo& \hat{E}^i_N(\calO_{\QQ_{p,\infty}})\otimes\Qp\slash\Zp&\rTo& H^1(\QQ_{p,\infty},\Ep)&\rTo&  \frac{H^1(\QQ_{p,\infty},\Ep)}{\hat{E}^i_N(\calO_{\QQ_{p,\infty}})\otimes\Qp\slash\Zp}&\rTo& 0\\
    &      & \dTo             &      & \dTo              &      &   \dTo                   & & \\  
   0&\rTo& \left( \hat{E}^i_N(\calO_{K_\infty})\otimes\Qp\slash\Zp\right)^\mathcal{H}&\rTo&\left( H^1(K_{\infty},\Ep)\right)^\mathcal{H}&\rTo&  \left(\frac{H^1(K_{\infty},\Ep)}{\hat{E}^i_N(\calO_{K_\infty})\otimes\Qp\slash\Zp}\right)^\mathcal{H}&& 
  \end{diagram}
  where the vertical maps are restrictions. The the first two maps are injective by Assumption~\ref{assumptionptorsion}. By the snake lemma, the kernel of the third map is bounded by the cokernel of the first, so it is sufficient to show that the cokernel of the restriction map
  \[ \hat{E}^i_N(\calO_{\QQ_{p,\infty}})\otimes\Qp\slash\Zp\rTo \left( \hat{E}^i_N(\calO_{K_\infty})\otimes\Qp\slash\Zp\right)^\mathcal{H}\]
  is a cofinitely generated $\Zp$-module. By taking $\mathcal{H}$-cohomology of the short exact sequence
  \[
   0\rTo \hat{E}^i_N(\calO_{K_\infty})\rTo\hat{E}^i_N(\calO_{K_\infty})\otimes \Qp\rTo\hat{E}^i_N(\calO_{K_\infty})\otimes\Qp/\Zp\rTo0,
  \]
  we may reduce the validity of Conjecture~\ref{support} to the following conjecture.
  
  \begin{conjecture}\label{cofcoh}
   The cohomological group $H^1\left(\mathcal{H},\hat{E}^i_N(\calO_{K_\infty})\right)$ is a cofinitely generated $\Zp$-module.
  \end{conjecture}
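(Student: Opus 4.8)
The plan is to reduce Conjecture~\ref{cofcoh} to a uniform boundedness statement at finite level and then attack the latter through the explicit description of the signed local conditions. Since $K/\Qp$ is finite, $\mathcal H=\Gal(K_\infty/\QQ_{p,\infty})$ is finite, and for $n\gg0$ the restriction map identifies it with $\Gal(K_n/\QQ_{p,n})$; in particular $\mathcal H$ acts on each $\hat E^i_N(\calO_{K_n})$, and $H^1(\mathcal H,-)$ is annihilated by $|\mathcal H|$. A $\Zp$-module annihilated by a fixed power of $p$ is cofinitely generated over $\Zp$ if and only if it is finite, and since group cohomology commutes with direct limits,
\[
H^1\bigl(\mathcal H,\hat E^i_N(\calO_{K_\infty})\bigr)=\varinjlim_n H^1\bigl(\mathcal H,\hat E^i_N(\calO_{K_n})\bigr)
\]
is a direct limit of finite groups of exponent dividing $|\mathcal H|$, hence finite precisely when the orders $\bigl|H^1(\mathcal H,\hat E^i_N(\calO_{K_n}))\bigr|$ are bounded independently of $n$. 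So it suffices to establish this uniform bound.

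For this I would produce an additive model that is well behaved as $n$ grows. By Corollary~\ref{rewrite}, $\log_{\hat E}$ identifies $\hat E^i_N(\calO_{K_n})$ with the $\Zp[\mathcal H]$-lattice $\Lambda_n\cap W_n$ inside $K_n$, where $\Lambda_n:=\log_{\hat E}\bigl(\hat E(\calO_{K_n})\bigr)$ and $W_n:=\bigoplus_{m\in S_{N,i}^n}K_m^{(0)}$ is the $\mathcal H$-stable $\Qp$-subspace cut out by the cyclotomic trace conditions. The subtle point is that $\Lambda_n$ alone is \emph{not} comparable to $\calO_{K_n}$ with a bound independent of $n$ (the formal logarithm acquires denominators that grow with the ramification of $K_n$); what should rescue the argument is that the signed conditions are precisely adapted to the ``bounded'' power series $\log^{\pm}(1+\pi)$ entering the change-of-basis matrix~\eqref{matrixM}, so that $\Lambda_n\cap W_n$ \emph{is} commensurable, with a uniform constant, to an explicit $\Zp[\mathcal H]$-lattice built out of the $\calO_{K_n}\cap K_m^{(0)}$ together with a fixed congruence ideal of $\calO_{K_N}$ — the integral counterpart of Kobayashi's plus/minus computations. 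Granting such a uniform comparison, a dévissage reduces the desired bound to uniform control of $H^0$ and $H^1$ of $\mathcal H$ on these explicit trace lattices, and, via the short exact sequence
\[
0\longrightarrow\calO_{K_n}\cap W_n\longrightarrow\calO_{K_n}\longrightarrow\calO_{K_n}/(\calO_{K_n}\cap W_n)\longrightarrow0,
\]
to uniform control of the $\mathcal H$-cohomology of $\calO_{K_n}$ and of the quotient lattice, which sits inside $K_n/W_n=K_N\oplus\bigoplus_{m\in S_{N,i'}^n}K_m^{(0)}$.

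When $K/\Qp$ is unramified this works cleanly: $\calO_{K_n}$ is free over $\Zp[\mathcal H]$ by the normal integral basis theorem, so $H^i(\mathcal H,\calO_{K_n})=0$ for $i>0$, and tracking the quotient lattice should give the conjecture in this case — in particular whenever the completions of $L$ above $p$ are unramified over $\Qp$. The main obstacle is the ramified, and especially the wildly ramified, case: then $\calO_{K_n}$ is no longer $\mathcal H$-cohomologically trivial, its cohomology is governed by the different $\mathfrak d_{K_n/\QQ_{p,n}}$ — which by deep ramification (Coates--Greenberg) tends to the unit ideal, but only at a rate one must pin down to keep $\bigl|H^0(\mathcal H,\calO_{K_n})\bigr|$ and $\bigl|H^1(\mathcal H,\calO_{K_n})\bigr|$ bounded — and, more seriously, the cyclotomic trace decomposition $K_n=K\oplus\bigoplus_m K_m^{(0)}$ is not integrally split, so one must control the defect between $\bigoplus_m\bigl(\calO_{K_n}\cap K_m^{(0)}\bigr)$ and $\calO_{K_n}$ against the higher-ramification filtration of $\calO_{K_n}$ over $\calO_{\QQ_{p,n}}$, uniformly in $n$ and compatibly with the ``jumping'' pattern. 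This last interaction is the step I do not see how to carry out. An alternative that would bypass the integral bookkeeping is to observe (Corollary~\ref{orthdual}) that $\hat E^i_N(\calO_{K_\infty})\otimes\QZ$ is Pontryagin dual to a signed local Iwasawa module and to prove that this module is free, or at least of finite projective dimension, over $\Lambda\bigl(\Gal(K_\infty/\Qp)\bigr)$ — the analogue for the signed theory of the $\Lambda(\Gamma_K)$-freeness of $H^1_{\Iw}(K,T)$ when $V^{H_K}=0$ — whence a base-change argument over $\mathcal H$ would force $H^1(\mathcal H,-)$ to be small; but establishing such a structural statement for the signed modules when $K\neq\Qp$ is itself open and is, in effect, equivalent to Conjecture~\ref{cofcoh}.
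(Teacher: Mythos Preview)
The statement you address is Conjecture~\ref{cofcoh}, which the paper does \emph{not} prove; Section~\ref{cyclotomicshortexact} explicitly presents it as the obstacle to which Conjecture~\ref{support} is reduced. So there is no proof in the paper to compare against, only a brief suggestion: exploit the Coates--Greenberg vanishing $H^1(\mathcal H,\hat E(\calO_{K_\infty}))=0$ together with a Kobayashi-type almost-exact sequence
\[
0\rightarrow\hat{E}(\calO_{K_N})\rightarrow\hat{E}^1_N(\calO_{K_\infty})\oplus \hat{E}^2_N(\calO_{K_\infty})\rightarrow\hat{E}(\calO_{K_\infty}),
\]
and the paper admits this does not obviously succeed.

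Your proposal is accordingly honest about being incomplete, and that is appropriate. Your reduction step --- $\mathcal H$ is finite, hence $H^1(\mathcal H,-)$ is annihilated by $|\mathcal H|$, so cofinitely generated over $\Zp$ is equivalent to finite, and the problem becomes uniform boundedness of $|H^1(\mathcal H,\hat E^i_N(\calO_{K_n}))|$ in $n$ --- is correct and already sharpens the question usefully. Your line of attack is genuinely different from the paper's: instead of comparing $\hat E^i_N$ to $\hat E$ through the Kobayashi sequence, you pass to the additive side via $\log_{\hat E}$ and try to control the $\mathcal H$-cohomology of the resulting lattices directly. The unramified case you sketch is plausible (Noether's theorem gives cohomological triviality of $\calO_{K_n}$ as a $\Zp[\mathcal H]$-module), and if carried through would yield partial information the paper does not obtain. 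However, the step you yourself flag --- the uniform-in-$n$ comparison between $\log_{\hat E}(\hat E^i_N(\calO_{K_n}))$ and a concrete lattice assembled from the $\calO_{K_n}\cap K_m^{(0)}$ --- is genuinely hard even in the unramified case, and neither you nor the paper supplies it. Your alternative via freeness or finite projective dimension of the signed Iwasawa module is, as you note, essentially equivalent to the conjecture itself. In short: your outline is reasonable and goes further than the paper in one direction, but does not close the gap; given that the statement is an open conjecture, this is the expected outcome.
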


  Note that it is shown in \cite[Theorem~3.1]{coatesgreenberg} that $H^1\left(\mathcal{H},\hat{E}(\calO_{K_\infty})\right)=0$. It therefore might be possible to prove Conjecture~\ref{cofcoh} by showing that an exact sequence similar to \cite[(8.22)]{kobayashi03} holds, e.g., to give a bound on the cokernel of the last map of 
  \[
   0\rTo\hat{E}(\calO_{K_N})\rTo^{x\mapsto x\oplus x}\hat{E}^1_N(\calO_{K_\infty})\oplus \hat{E}^2_N(\calO_{K_\infty})\rTo^{x\oplus y\mapsto x-y}\hat{E}(\calO_{K_\infty}).
  \]
  Unfortunately, this does not seem to be straightforward as far as we can see.
  
%+++++++++++++++++++++++++++++++++++++++++++++++++++++++++++++++++++++++++++++++++++++++++++++++++++++++++++++++++++++++++++++++++++++++++++=

\bibliographystyle{../amsalphaurl} 
\bibliography{../references}   

\end{document}